\theoremstyle{plain}
\numberwithin{equation}{section}
\newtheorem{theorem}{Theorem}[section]
\newtheorem{proposition}[theorem]{Proposition}
\newtheorem{lemma}[equation]{Lemma}
\newtheorem{definition}[equation]{Definition}
\numberwithin{equation}{section}
\theoremstyle{remark}
\newtheorem{remark}[theorem]{Remark}
\newcommand{\R}{{\mathbb R}}
\def\div{ \hbox{\rm div}\,  }
\newcommand\Z{{\mathbb{Z}}}
\def\p{{\mathcal P}}
\def\q{{\mathcal Q}}
\def\ddj{\dot \Delta_j}
\def\f{\frac}
\def\e{\mathcal{E}_\infty(t)}
\def\lon{\varepsilon}
\def\er{\mathcal{E}_1(t)}
\def\ga{\Gamma}
\def\u{\mathbf{u}}
\begin{document}

\title[Global solutions and time decay rates to  Oldroyd-B model ]{Global  wellposedness and large time behavior of solutions to the  $N$-dimensional compressible\\ Oldroyd-B model}

\author[Xiaping Zhai,  and Yongsheng Li]{Xiaoping Zhai$^\dag$ and Yongsheng Li$^\ddag$}

\address{$^{\dag}$ School  of Mathematics and Statistics, Shenzhen University,
 Shenzhen, 518060, China}

\email{pingxiaozhai@163.com}

\address{$^\ddag$ School of Mathematics,
South China University of Technology,
Guangzhou, 510640, China}

\email{yshli@scut.edu.cn}

\vskip .2in
\begin{abstract}
	The purpose of this work is to study the global wellposedness and large time behavior results of
strong solutions for the compressible Oldroyd-B model derived by Barrett,  Lu,  S\"uli (Commun. Math. Sci., 15, 1265--1323, 2017).
Exploiting the Harmonic analysis tools (especially Littlewood-Paley
theory), we first  study the global well-posedness of the model with small initial data in spaces with low regularity.
Then,
under  a suitable  condition involving only the low frequency of the  initial data, we also obtain the optimal decay rates of the solutions. Compared with the result by Wang and Wen (Math. Models Methods Appl. Sci., 30, 139--179, 2020), the polymer number density is allowed to vanish and the stress tensor isnear zero equilibrium.

	\end{abstract}
\maketitle
%\noindent {\bf Key Words:}
%{Compressible Oldroyd-B model; Global  solutions; Decay rates; Besov spaces}

%\noindent {\bf Mathematics Subject Classification (2010)} {35Q53,76N10, 74H40}

\section{ Introduction and the main results }
In this paper, we mainly consider the Cauchy problem of the following
 compressible Oldroyd-B model:
\begin{eqnarray}\label{sys}
\left\{\begin{aligned}
&\partial_t\rho+\div(\rho \u)=0,\\
&\partial_t\eta+\div(\eta \u)-\lon\Delta\eta=0,\quad\quad x\in \R^n, \quad t>0,\\
&\partial_t{\mathbb{T}}+(\u\cdot\nabla){\mathbb{T}}+ {\mathbb{T}} \div \u  -
(\nabla \u{\mathbb{T}}+{\mathbb{T}}\nabla^{\top} \u)-\lon\Delta{\mathbb{T}}= \frac{\kappa\,A_0}{2\lambda_1}\eta  \,\mathbb{Id} - \frac{A_0}{2\lambda_1}{\mathbb{T}} ,\\
&\rho(\partial_t\u+\u\cdot\nabla \u)-\mu\Delta \u-(\lambda+\mu)\nabla\div \u+\nabla
P=\div({\mathbb{T}} - (\kappa L\eta + \zeta\, \eta^2)\,\mathbb{Id}\, \big),
\end{aligned}\right.
\end{eqnarray}
for $(t,x)\in \mathbb{R}_+\times\mathbb{R}^n\,(n= 2,3)$. Here $\rho=\rho(t,x)\in \mathbb{R}_+$ is the density function of the fluid, ${ \u}={ \u}(t,x)\in \mathbb{R}^n$ is the velocity. The symmetric matrix function ${\mathbb{T}} = ({\mathbb{T}}_{i,j})$, $1\leq i,j \leq n$ is the extra stress tensor
and ${\eta}={\eta}(t,x)\in\mathbb{R}_+$ represents the polymer number density defined as the
integral of a probability density function $\psi$  with respect to the conformation vector, which is a microscopic variable  in the modeling of dilute polymer chains, i.e.,

$$\eta=\int_{\R^n}\psi(t,x,q)\,dq\ge 0.$$
Here $\psi$ is governed by the Fokker-Plank equation.

 The viscosities constant $\mu$ and $\lambda$ are supposed to  satisfy $\mu>0$ and $n\lambda+2\mu\ge 0$.
 In particular, the parameters $\kappa$,  $\lon$,  $A_0$, $\lambda_1$ are all positive numbers, whereas $\zeta \geq 0$ and $L\geq 0$ with $\zeta + L \neq 0$. The term $\kappa L \eta + \zeta \eta^2$ in the momentum equation \eqref{sys} can be seen as the {polymer pressure}, compared to the fluid pressure $P(\rho)=A\rho^\gamma$.
 System \eqref{sys} is supplemented
with the initial data
\begin{equation}\label{initial}
  (\rho, \u, \eta,{\mathbb{T}} )|_{t=0}=(\rho_0(x), \u_0(x), \eta_0(x), {\mathbb{T}}_0(x)), \,\,x\in \mathbb{R}^n,
\end{equation}
and with far field behaviors
\begin{equation}\label{far}
\rho\to\bar{\rho},\quad\quad \u\to\mathbf{0},\quad\quad \eta\to 0,\quad\quad {\mathbb{T}}\to 0\mathbb{Id}\quad \mathrm{as}
\quad|x|\to \infty.
\end{equation}

Micro-macro models of dilute polymeric fluids that arise from statistical physics are based on coupling the Navier-
Stokes system to the Fokker-Planck equation. In these models polymer molecules are idealized as chains of massless
beads, linearly connected with inextensible rods or elastic springs.
The model we consider here was first derived by Barrett, Lu, and S\"uli \cite{model} via micro-macro analysis
of the compressible Navier-Stokes-Fokker-Planck system
studied in a series of papers by Barrett and S\"uli \cite{Barrett1}--\cite{Barrett5}.
Barrett and S\"uli  \cite{model} obtained the global-in-time finite-energy weak solutions with
large initial data in $\R^2$. However, the uniqueness of the
global weak solution is still open. Later, Lu and
Zhang \cite{luyong} proved the local wellposedness, weak-strong uniqueness and a refined blow-up criterion involving only the upper bound of the fluid density.  Wang and Wen \cite{wenhuanyao} obtained the global wellposedness of \eqref{sys} as well as associated time-decay estimates in Sobolev space, if the initial data is near a nonzero equillbrium state.
 Most recently, the first author of the present paper in \cite{zhaixiaopingyuxie1}  justify  the  low Mach number convergence to the incompressible
Oldoryd-B model   for viscous compressible Oldoryd-B  model   in the {ill-prepared data} case.
When neglecting the stress diffusion in \eqref{sys} and assuming further the extra stress tensor is a scalar matrix,
Lu and Pokorn\'y  \cite{luyong2} obtained the  global weak solutions with large data.
%, Zhai \cite{zhaixiaopingyuxie2} obtained the  global strong solutions with small initial data.

 It is interesting to note that the model \eqref{sys} with $\eta=0$ is  related to the following compressible Oldoryd-B model, i.e.,
\begin{eqnarray}\label{sys1}
\left\{\begin{aligned}
&\partial_t\rho+\div(\rho \u)=0,\\
&\rho(\partial_t\u+(\u\cdot\nabla) \u)-\mu\Delta \u-(\lambda+\mu)\nabla\div \u+\nabla
P=\mu_1\div{\mathbb{T}},\\
&\partial_t{\mathbb{T}}+(\u\cdot\nabla){\mathbb{T}}+g({\mathbb{T}},
\nabla \u)+\beta{\mathbb{T}}=\mu_2 D(\u),\quad\quad\quad\quad x\in \R^n, \quad t>0,
\end{aligned}\right.
\end{eqnarray}
where $D(\u)$ is the symmetric part of $\nabla \u$, and $\Omega(\u)$ is the skew-symmetric part of $\nabla \u$, namely
\begin{equation*}%\label{omegau}
D(\u) = \frac{1}{2} \big( \nabla \u + (\nabla \u)^{\top} \big),\quad \Omega(\u) = \frac{1}{2} \big( \nabla \u - (\nabla \u)^{\top} \big),
\end{equation*}
 and
$$g({\mathbb{T}}, \nabla \u)\stackrel{\mathrm{def}}{=}{\mathbb{T}}
\Omega(\u)-\Omega(\u){\mathbb{T}}-b\left(D(\u){\mathbb{T}}+{\mathbb{T}} D(\u)\right),\quad\hbox{$b$ is a parameter in $[-1,1]$}.$$

The  Oldroyd--B model  attracts continuous attentions of mathematicians, however,
there has few  known results concerning compressible Oldroyd-B models of \eqref{sys1}.
 Lei \cite{leizhen2006} and Gullop\'{e} {\it et al.} \cite {GST} studied
the incompressible limit problem of the compressible Oldroyd-B model in a
torus and bounded domain
of $\R^3$, respectively.
Recently,
Zi \cite{zuiruizhao2017} obtained the global small solutions of \eqref{sys1} in the critical $L^2$ Besov spaces. The first author in the present paper and Chen  \cite{zhaixiaopingarxiv} generalized the result of \cite{zuiruizhao2017} about \eqref{sys1}    without damping mechanism to the critical $L^p$  spaces.
For the compressible Oldroyd type model
based on the deformation tensor, see the results  \cite{huxianpeng}, \cite{huxianpeng2013}, \cite{pan2019dcdsa}, \cite{qian} and references therein.
Let $\rho$ be constant, the system \eqref{sys1} reduces to be the incompressible Oldroyd--B model, which has made rather rich results, see \cite{chemin}, \cite{ER},  \cite{GS}, \cite{GS2}, \cite{lin2012}, \cite{Renardy}.

Let us give now more details on the form of the solutions that we are going to consider.
Let  $\bar{\rho}=1$ in \eqref{far} and define
$\rho=1+a$,  we can reformulate the system \eqref{sys} into the following form:
\begin{eqnarray}\label{m}
\left\{\begin{aligned}
&\partial_ta+\div \u=-\div(a\u),\\
&\partial_t\eta-\lon\Delta\eta=-\div(\eta \u),\\
&\partial_t{\mathbb{T}}+ \frac{A_0}{2\lambda_1}{\mathbb{T}}+(\u\cdot\nabla){\mathbb{T}}-\lon\Delta{\mathbb{T}}= \frac{\kappa\,A_0}{2\lambda_1}\eta  \,\mathbb{Id}+F({\mathbb{T}}, \u),\\
&\partial_t\u+\u\cdot\nabla \u-\mu\Delta \u-(\lambda+\mu)\nabla\div \u+\nabla
a=\div{\mathbb{T}}-\kappa L\nabla\eta +G(a,\u,\eta,{\mathbb{T}}),\\
&(a, \u, \eta,{\mathbb{T}} )|_{t=0}=(a_0(x), \u_0(x), \eta_0(x), {\mathbb{T}}_0(x)),
\end{aligned}\right.
\end{eqnarray}
with
\begin{align*}%\label{}
\nu\stackrel{\mathrm{def}}{=}\lambda+2\mu,\quad
 I(a)\stackrel{\mathrm{def}}{=}\frac{a}{1+a},\quad k(a)\stackrel{\mathrm{def}}{=}
-\frac{P'(1+a)}{1+a}+P'(1)\quad  \hbox{with $P'(1)=1$},
\end{align*}
\begin{align*}%\label{}
F({\mathbb{T}}, \u)\stackrel{\mathrm{def}}{=} &
(\nabla \u\ {\mathbb{T}}+{\mathbb{T}}\nabla^\top \u)- {\mathbb{T}}\div \u,\\
G(a,\u,\eta,{\mathbb{T}})\stackrel{\mathrm{def}}{=} &k(a)\nabla a-I(a)(\mu\Delta \u+(\lambda+\mu)\nabla\div \u)- I(a)( \div{\mathbb{T}}- \nabla\eta)-\zeta(1-I(a))\eta\nabla\eta.
\end{align*}

The first main result of the paper is stated as follows.
\begin{theorem}(Local wellposedness)\quad \label{dingli1}
Let   $ n=2,3$ and $1<p<2n$. For any  $\u_0\in \dot{B}_{p,1}^{\frac np-1}(\R^n)$,  $(\eta_0,{\mathbb{T}}_0)\in \dot{B}_{p,1}^{\frac np}(\R^n)$ and $a_0\in \dot{B}_{p,1}^{\frac np}(\R^n)$  with $1 + a_0$ bounded away from zero. Then there exists a positive time
$T$ such that the system \eqref{m} has a unique solution with
\begin{align*}
&a\in C_b([0,T ];{\dot{B}}_{p,1}^{\frac {n}{p}}),\quad \u\in C_b([0,T ];{\dot{B}}_{p,1}^{\frac {n}{p}-1})\cap L^{1}
([0,T];{\dot{B}}_{p,1}^{\frac np+1}),\\
& (\eta,{\mathbb{T}})\in C_b([0,T ];{\dot{B}}_{p,1}^{\frac {n}{p}})\cap L^{1}
([0,T];{\dot{B}}_{p,1}^{\frac np+2}),
\quad {\mathbb{T}} \in L^{1}
([0,T];{\dot{B}}_{p,1}^{\frac np}).
\end{align*}
\end{theorem}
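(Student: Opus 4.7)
The plan is to follow Danchin's iteration scheme for compressible Navier--Stokes in critical Besov spaces, enlarged to accommodate the two new unknowns $\eta$ and $\mathbb{T}$. First I would fix a resolution space $E_T$ matching the conclusion of the theorem, then construct a sequence $(a^n,\u^n,\eta^n,\mathbb{T}^n)$ by solving, at each step, the linearization of \eqref{m} around the previous iterate. This decouples the system into three linear blocks: the barotropic Lam\'e system for $(a^{n+1},\u^{n+1})$; a pure heat equation for $\eta^{n+1}$; and a heat equation with constant damping for $\mathbb{T}^{n+1}$. The initial iterate is taken to be zero.

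\noindent\textbf{Linear theory.} For the Lam\'e block I would invoke the hybrid-Besov a priori estimate (via the effective-flux / effective-velocity decomposition in high frequencies and a separate low-frequency analysis), which for $1<p<2n$ yields $a\in\widetilde L^{\infty}_T\dot B^{n/p}_{p,1}$ (no smoothing, reflecting the hyperbolic nature of the continuity equation) and $\u\in\widetilde L^{\infty}_T\dot B^{n/p-1}_{p,1}\cap L^1_T\dot B^{n/p+1}_{p,1}$, controlled by the data plus $\|f_1\|_{L^1_T\dot B^{n/p}_{p,1}}+\|f_2\|_{L^1_T\dot B^{n/p-1}_{p,1}}$. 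For $\eta$ and $\mathbb{T}$, standard maximal regularity of the heat semigroup in homogeneous Besov spaces gives the smoothing in $L^1_T\dot B^{n/p+2}_{p,1}$. The constant damping $\tfrac{A_0}{2\lambda_1}\mathbb{T}$, integrated in time, supplies the additional $L^1_T\dot B^{n/p}_{p,1}$ bound on $\mathbb{T}$ that appears in the statement.

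\noindent\textbf{Nonlinear estimates.} Each source term is then bounded by combining Bony's paradifferential decomposition with the product law $\dot B^{s_1}_{p,1}\cdot\dot B^{s_2}_{p,1}\hookrightarrow\dot B^{s_1+s_2-n/p}_{p,1}$, whose admissible range is what pins down $1<p<2n$. The algebra property of $\dot B^{n/p}_{p,1}$ handles $\div(a\u)$ and $\div(\eta\u)$ (absorbing the derivative into the $L^1_T$ smoothing on $\u$ and $\eta$), and $F(\mathbb{T},\u)$ is controlled by pairing $\nabla\u\in L^1_T\dot B^{n/p}_{p,1}$ with $\mathbb{T}\in\widetilde L^{\infty}_T\dot B^{n/p}_{p,1}$. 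The composite terms $k(a)\nabla a$, $I(a)(\mu\Delta\u+(\lambda+\mu)\nabla\div\u)$, $I(a)(\div\mathbb{T}-\nabla\eta)$ and $\zeta(1-I(a))\eta\nabla\eta$ are treated by the Besov composition lemma, using the embedding $\dot B^{n/p}_{p,1}\hookrightarrow L^{\infty}$ to keep $1+a$ bounded away from zero on a short time interval and to make sense of $k,I$ as smooth functions of $a$.

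\noindent\textbf{Contraction, uniqueness, main obstacle.} Combining the linear bounds with the nonlinear estimates and choosing $T$ small enough, I would show that the iteration map preserves a ball of $E_T$ and that consecutive differences contract; passing to the limit and upgrading to time continuity via the usual $\widetilde L^{\infty}$-density argument delivers a solution with the announced regularity. Uniqueness is obtained by estimating the difference of two solutions in a strictly weaker norm (shifted down by one derivative for $a$ and $\u$), as in Chemin--Danchin, in order to absorb the loss coming from the $\nabla a$ coupling. The main technical difficulty I anticipate is not any isolated estimate but the simultaneous bookkeeping along the coupling chain $\nabla a\to\u\to\div\mathbb{T}\to F(\mathbb{T},\u)\to\mathbb{T}$: the derivative loss in $\div\mathbb{T}$ inside the momentum equation must be exactly absorbed by the parabolic smoothing of $\mathbb{T}$, while respecting the fact that $a$ enjoys no smoothing and thus cannot be called upon to pay for any regularity deficit elsewhere. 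This is precisely what dictates the hybrid $\widetilde L^{\infty}_T$ vs.\ $L^1_T$ structure of the resolution space written in the theorem.
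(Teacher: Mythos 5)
Your proposal uses the same linear a priori estimates (hybrid-Besov bound for the Lam\'e block, heat maximal regularity for $\eta$ and $\mathbb{T}$, the constant damping supplying the extra $L^1_T\dot B^{n/p}_{p,1}$ control on $\mathbb{T}$) and the same nonlinear toolbox (Bony decomposition, product and composition estimates, $\dot B^{n/p}_{p,1}\hookrightarrow L^{\infty}$) as the paper, but you reach existence by a genuinely different mechanism. The paper first peels off the free linear flows $\u_\mathfrak{F}=e^{t\mathcal{A}}\u_0$, $\eta_\mathfrak{F}=e^{\lon t\Delta}\eta_0$ and $\mathbb{T}_\mathfrak{F}$ (solving the damped heat equation), rewrites the problem as the system \eqref{linear} for the remainders $(a,\bar\u,\bar\eta,\bar{\mathbb{T}})$, and then proceeds by Friedrichs approximation: smooth the data, solve the approximated systems on intervals $[0,T^n]$, prove a uniform lower bound on $T^n$ together with uniform estimates, and extract a limit by compactness, with uniqueness handled separately as in \cite{danchin2014}. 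You instead build Picard iterates by linearizing around the previous one and aim at a contraction. Both routes are legitimate, but one point in your sketch would fail if read literally: since the continuity equation for $a$ has no parabolic smoothing, estimating $a^{n+1}-a^{n}$ against $\u^{n+1}-\u^{n}$ costs a full derivative, so consecutive iterates do \emph{not} contract in the resolution space $E_T$ itself. The Chemin--Danchin weaker-norm device you invoke for uniqueness (measuring the difference of $(a,\u)$ one derivative lower) must already be deployed for the Cauchy estimate of the iterates: one proves uniform bounds in $E_T$, convergence in the weaker norm, and then recovers membership of the limit in $E_T$ by Fatou and interpolation. With that clarification your route closes. The paper's compactness route sidesteps this bookkeeping for the existence step at the price of a separate uniqueness argument; yours makes uniqueness essentially free once the weaker-norm estimate is in place.
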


\begin{remark}
 The  solutions constructed here allow
    the
regularity exponent $n/p-1$ for the velocity becomes negative.
Our result thus applies  to {large} highly oscillating initial velocities (see  \cite{ chenqionglei}
for more explanation).
\end{remark}
Before presenting the second result of the paper, we give the following notation.
 Let $\mathcal{S}(\R^n)$ be the space of
rapidly decreasing functions over $\R^n$ and $\mathcal{S}'(\R^n)$ its dual
space. For any $z \in\mathcal{S}'(\R^n)$,
the low and high frequency parts are expressed as
\begin{align*}
z^\ell\stackrel{\mathrm{def}}{=}\sum_{j\leq j_0}\ddj z\quad\hbox{and}\quad
z^h\stackrel{\mathrm{def}}{=}\sum_{j>j_0}\ddj z
\end{align*}
for some fixed   integer $j_0\ge 1$ (the value of which follows from the proof of the main theorems). %depending only on $p$.
The corresponding  truncated semi-norms are defined  as follows:
\begin{align*}\|z\|^{\ell}_{\dot B^{s}_{p,r}}
\stackrel{\mathrm{def}}{=}  \|z^{\ell}\|_{\dot B^{s}_{p,r}}
\ \hbox{ and }\   \|z\|^{h}_{\dot B^{s}_{p,r}}
\stackrel{\mathrm{def}}{=}  \|z^{h}\|_{\dot B^{s}_{p,r}}.
\end{align*}
Denote $$
\Lambda\stackrel{\mathrm{def}}{=}\sqrt{-\Delta}, \quad\hbox{and}\quad \p=\mathcal{I}-\mathcal{Q}\stackrel{\mathrm{def}}{=}\mathcal{I}-\nabla\Delta^{-1}\div.$$

The second main result of the paper is stated as follows.
\begin{theorem}(Global  wellposedness)\quad \label{dingli2}
Let   $ n=2,3$ and
\begin{equation*}
2\leq p \leq \min(4,{2n}/({n-2}))\quad\hbox{and, additionally, }\  p\not=4\ \hbox{ if }\ n=2.
\end{equation*}
 For any $(a_0^\ell,\u_0^\ell)\in \dot{B}_{2,1}^{\frac n2-1}(\R^n)$,  $\eta_0^\ell\in \dot{B}_{2,1}^{\frac n2-2}(\R^n)$,  ${\mathbb{T}}_0^\ell\in \dot{B}_{2,1}^{\frac n2}(\R^n)$,  and  $(a^h_0,{\mathbb{T}}_0^h)\in \dot{B}_{p,1}^{\frac np}(\R^n)$, $(\u_0^h,\eta_0^h)\in \dot{B}_{p,1}^{\frac np-1}(\R^n)$.
 There
exists a positive constant $c_0$ such that if,
\begin{align}\label{smallness}
\|(a^\ell_0,\u^\ell_0)\|_{\dot{B}_{2,1}^{\frac {n}{2}-1}}+\|{\mathbb{T}}_0^\ell\|_{\dot{B}_{2,1}^{\frac {n}{2}}}+\|\eta_0^\ell\|_{\dot{B}_{2,1}^{\frac {n}{2}-2}}+\|(a^h_0,{\mathbb{T}}^h_0)\|_{\dot B^{\frac  np}_{p,1}}+\| (\u_0^h,\eta_0^h)\|_{\dot B^{\frac  np-1}_{p,1}}\leq c_0,
\end{align}
then
the system \eqref{m} has a unique global solution $(a,u,\eta,{\mathbb{T}})$ so that
\begin{align*}%\label{}
&a^\ell\in C_b(\R^+;{\dot{B}}_{2,1}^{\frac {n}{2}-1})\cap L^{1}
(\R^+;{\dot{B}}_{2,1}^{\frac n2+1}),\quad
 a^h\in C_b(\R^+;{\dot{B}}_{p,1}^{\frac {n}{p}})\cap L^{1}
(\R^+;{\dot{B}}_{p,1}^{\frac np}),\\
&\u^\ell\in C_b(\R^+;{\dot{B}}_{2,1}^{\frac {n}{2}-1}\cap L^{1}
(\R^+;{\dot{B}}_{2,1}^{\frac n2+1}),\quad \u^h\in C_b(\R^+;{\dot{B}}_{p,1}^{\frac {n}{p}-1}\cap L^{1}
(\R^+;{\dot{B}}_{p,1}^{\frac np+1}),\\
&\eta^\ell\in C_b(\R^+;{\dot{B}}_{2,1}^{\frac {n}{2}-2})\cap L^{1}
(\R^+;{\dot{B}}_{2,1}^{\frac n2}),\quad \eta^h\in C_b(\R^+;{\dot{B}}_{p,1}^{\frac {n}{p}-1})\cap L^{1}
(\R^+;{\dot{B}}_{p,1}^{\frac np+1}),\\
&{\mathbb{T}}^\ell\in C_b(\R^+;\dot{B}_{2,1}^{\frac n2})\cap L^{1}
(\R^+;{\dot{B}}_{2,1}^{\frac n2}),\quad {\mathbb{T}}^h\in C_b(\R^+;\dot{B}_{p,1}^{\frac np})\cap L^{1}
(\R^+;{\dot{B}}_{p,1}^{\frac np+2}).
\end{align*}
Moreover,  there exists some constant $C$ such that
\begin{align}\label{xiaonorm}
&X(t)\leq Cc_0,
\end{align}
with
\begin{align*}%\label{yi5}
X(t)\stackrel{\mathrm{def}}{=}&\|(a,\u)\|^\ell_{\widetilde{L}^\infty_t(\dot{B}_{2,1}^{\frac{n}{2}-1})}
+\|\eta\|^\ell_{\widetilde{L}^\infty_t(\dot{B}_{2,1}^{\frac{n}{2}-2})}
+\|{\mathbb{T}}\|^\ell_{\widetilde{L}^\infty_t(\dot{B}_{2,1}^{\frac{n}{2}})}
+\|(\u,\eta)\|^h_{\widetilde{L}^\infty_t(\dot{B}_{p,1}^{\frac{n}{p}-1})}
\nonumber\\
&\quad+\|(a,{\mathbb{T}})\|^h_{\widetilde{L}^\infty_t(\dot{B}_{p,1}^{\frac{n}{p}})}+\|(a,\u)\|^\ell_{L^1_t(\dot{B}_{2,1}^{\frac{n}{2}+1})}+\|(\eta,{\mathbb{T}})\|^\ell_{L^1_t(\dot{B}_{2,1}^{\frac{n}{2}})}+ \|{\mathbb{T}}^\ell\|_{L^1_t(\dot B^{\frac  n2+2}_{2,1})}\nonumber\\
&\quad\quad+\|(a,{\mathbb{T}})\|^h_{L^1_t(\dot{B}_{p,1}^{\frac{n}{p}})}+\|(\u,\eta)\|^h_{L^1_t(\dot{B}_{p,1}^{\frac{n}{p}+1})}
+\|{\mathbb{T}}\|^h_{L^1_t(\dot{B}_{p,1}^{\frac{n}{p}+2})}.
\end{align*}

\end{theorem}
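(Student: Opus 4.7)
The plan is to establish the global solution by a continuity/bootstrap argument on the composite norm $X(t)$: starting from a local solution provided by (a suitable hybrid-space refinement of) Theorem \ref{dingli1}, I would show that if $X(t) \leq 2Cc_0$ on some interval $[0,T^*)$, then in fact $X(t) \leq Cc_0$ there, thereby extending the solution to $\R^+$. This requires sharp linear estimates in each frequency regime, coupled with multilinear estimates in hybrid Besov spaces for the nonlinear source terms.

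First, I would carry out a careful linear analysis of \eqref{m} linearized around $(0,\mathbf{0},0,0)$. The $(a,\u)$ sub-system is the standard barotropic compressible Navier--Stokes linearization: the incompressible part $\p\u$ satisfies a heat equation, while $(a,\q\u)$ forms a damped acoustic system. As in Danchin's framework for compressible Navier--Stokes, effective unknowns (essentially a suitable combination of $a$ and $\Lambda^{-1}\div\u$) decouple the wave structure and yield the gain of two derivatives in $L^1_t$ at low frequency in $\dot B^{n/2-1}_{2,1}$ and at high frequency in $\dot B^{n/p}_{p,1}$. The $\eta$-equation is purely parabolic, so standard heat estimates provide the two-derivative gain from $\dot B^{n/2-2}_{2,1}$ to $L^1_t(\dot B^{n/2}_{2,1})$ at low frequency, and analogously at high frequency. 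The $\mathbb{T}$-equation carries both the damping $\frac{A_0}{2\lambda_1}\mathbb{T}$ and the dissipation $-\lon\Delta\mathbb{T}$, which explains the presence of two distinct $L^1_t$ norms for $\mathbb{T}$ in $X(t)$: the damping gives $L^1_t(\dot B^{n/2}_{2,1})$ without regularity gain (crucial to absorb the source $\frac{\kappa A_0}{2\lambda_1}\eta\,\mathbb{Id}$), while the heat part supplies $L^1_t(\dot B^{n/2+2}_{2,1})$.

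Second, I would estimate the nonlinear sources $-\div(a\u)$, $-\div(\eta\u)$, $F(\mathbb{T},\u)$, and $G(a,\u,\eta,\mathbb{T})$ in the norms dictated by the linear theory, splitting each into low- and high-frequency contributions and using paraproduct decompositions to estimate low-low, low-high, and high-high interactions. The restriction $2\le p\le\min(4,2n/(n-2))$ with $p\neq 4$ in dimension two is precisely what allows the product laws between $L^2$-based and $L^p$-based pieces to close. The composition estimates for $I(a)$ and $k(a)$ use $I(0)=k(0)=0$, the smoothness of $I,k$, and the embedding of both $a^\ell\in\dot B^{n/2-1}_{2,1}$ and $a^h\in\dot B^{n/p}_{p,1}$ into $L^\infty$ after summing (requires $n/p\geq 0$, fine here). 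The goal is to bound every nonlinearity by a power of $X(t)$ strictly greater than one, so that the quadratic terms are absorbed under the smallness assumption \eqref{smallness}.

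The main obstacle will be the cross-coupling at low frequency between $\eta$ and $(a,\u,\mathbb{T})$. Since $\eta^\ell$ lives two scales below $(a,\u)^\ell$, the term $-\kappa L\nabla\eta$ in the momentum equation is formally at the critical scale, and one \emph{must} exploit the full parabolic gain $\|\eta\|^\ell_{L^1_t(\dot B^{n/2}_{2,1})}$ to control it when estimating $(a,\u)^\ell$ (and symmetrically use the damping-only norm $\|\mathbb{T}\|^\ell_{L^1_t(\dot B^{n/2}_{2,1})}$ in the $\mathbb{T}$-equation to absorb the forcing by $\eta$). A further delicate point is the quasilinear term $F(\mathbb{T},\u)$ at low frequency, which requires the second gain $\|\mathbb{T}\|^\ell_{L^1_t(\dot B^{n/2+2}_{2,1})}$, and the coupling term $I(a)\div\mathbb{T}$ in $G$, which requires splitting into regimes and using the high-frequency parabolic estimate for $\mathbb{T}^h$. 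Once these are controlled, the usual compressible-Navier--Stokes nonlinearities $a\,\nabla\u$, $\u\cdot\nabla\u$, $k(a)\nabla a$, $I(a)\Delta\u$ and the polymer quadratic $\eta\nabla\eta$ are handled by standard paraproduct and commutator arguments, so the continuity argument closes and \eqref{xiaonorm} follows; uniqueness is obtained in the same functional framework.
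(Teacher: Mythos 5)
Your proposal follows essentially the same route as the paper: low/high frequency decomposition, heat estimates for $\p\u$, $\eta$, $\mathbb{T}$, barotropic linearized estimates for $(a,\q\u)^\ell$ and the effective unknown $\Gamma=\q\u+\nu^{-1}(-\Delta)^{-1}\nabla a$ at high frequency, hybrid-space product and commutator estimates for the nonlinearities, and a bootstrap to close. The only slight mischaracterization is that the $L^1_t(\dot B^{n/2}_{2,1})$ control of $\mathbb{T}^\ell$ coming from the damping is needed to absorb $\div\mathbb{T}$ in the momentum equation, while the forcing $\eta\,\mathbb{Id}$ in the $\mathbb{T}$-equation is absorbed by first closing the parabolic $\eta^\ell$-estimate (the paper estimates $\eta$, then $\mathbb{T}$, then $(a,\u)$ sequentially and combines with large multiplicative constants); this does not affect correctness.
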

\begin{remark}
Compared with the  result by  Wang and  Wen \cite{wenhuanyao}, the polymer number density is allowed to vanish and the stress tensor
can   be near zero equilibrium here.
\end{remark}

With the global solutions constructed above,
next, a natural problem is what is the large time asymptotic behavior of this solutions.
The study of the large-time behavior of solutions to the partial different equations is also an old subject. We
refer for instance to \cite{xujiang}, \cite{xujiang2019arxiv},  \cite{zhaixiaoping} for the compressible Navier-Stokes equations and \cite{pan2019dcdsa}, \cite{wenhuanyao},  \cite{zhaixiaopingarxiv} for Oldroyd-B model.

One can now state the third result of the present paper.
\begin{theorem}$\mathrm{(}$Optimal decay$\mathrm{)}$\quad\label{dingli3}
Let ~$(a,\u,\eta,{\mathbb{T}})$ be the global small solutions addressed by Theorem \ref{dingli2}. For any $\frac{n}{2}-\frac{2n}{p}\le\sigma<\frac{n}{2}-1,$
and  $(a_0^\ell,\u_0^\ell)\in{\dot{B}_{2,\infty}^{\sigma}}(\R^n)$, $\eta_0^\ell\in{\dot{B}_{2,\infty}^{\sigma-1}}(\R^n)$, ${\mathbb{T}}_0^\ell\in{\dot{B}_{2,\infty}^{\sigma+1}}(\R^n)$, we have the following time-decay rate
\begin{align*}
\|\Lambda^{\gamma_1} (a,\u)\|_{L^p}
\le C(1+t)^{-\frac{n}{2}(\frac 12-\frac 1p)-\frac{\gamma_1-\sigma}{2}},\quad\quad &\forall \gamma_1\in\left(\frac np-\frac n2+\sigma,\frac np-1\right],\\
\|\Lambda^{\gamma_2}\eta\|_{L^p}
\le C(1+t)^{-\frac{n}{2}(\frac 12-\frac 1p)-\frac{\gamma_2-\sigma+1}{2}},\quad\quad &\forall \gamma_2\in\left(\frac np-\frac n2-1+\sigma,\frac np-1\right],\\
\|\Lambda^{\gamma_3}{\mathbb{T}}\|_{L^p}
\le C(1+t)^{-\frac{n}{2}(\frac 12-\frac 1p)-\frac{\gamma_3-\sigma-1}{2}},\quad\quad &\forall \gamma_3\in\left(\frac np-\frac n2+1+\sigma,\frac np\right].
\end{align*}
\end{theorem}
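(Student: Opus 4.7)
The plan is to combine three ingredients in the spirit of the Danchin--Xu method for compressible fluids: propagation of a weak, $\dot{B}_{2,\infty}$-type low-frequency norm with variable-dependent regularity shifts, a time-weighted bootstrap that upgrades this uniform bound into algebraic decay of the stronger Besov norms of Theorem \ref{dingli2}, and an interpolation-embedding step that converts the Besov decay into the claimed $L^p$ rates.

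The first step is to prove that
$$\mathcal{N}(t):=\|(a,\u)(t)\|^\ell_{\dot B^{\sigma}_{2,\infty}}+\|\eta(t)\|^\ell_{\dot B^{\sigma-1}_{2,\infty}}+\|\mathbb{T}(t)\|^\ell_{\dot B^{\sigma+1}_{2,\infty}}$$
stays uniformly bounded. The regularity shifts are forced by the linear coupling in \eqref{m}: $\eta$ drives $\mathbb{T}$ through the undifferentiated source $\frac{\kappa A_0}{2\lambda_1}\eta\,\mathbb{Id}$ while $\mathbb{T}$ enjoys full heat dissipation $-\lon\Delta$, so $\mathbb{T}$ sits two derivatives above $\eta$ at the dissipative level; the partially hyperbolic--parabolic block $(a,\u)$ sits one derivative in between. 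Localising at dyadic frequency $2^j$ with $j\le j_0$ and combining the standard Haspot--Danchin Lyapunov functional for the $(a,\u)$-block with the pure heat structure of $\eta$ and the heat-plus-damping structure of $\mathbb{T}$, I would obtain an inequality of the form
$$\frac{d}{dt}\,\mathcal{N}(t)+c\,\widetilde{\mathcal{N}}(t)\lesssim \text{(nonlinear source)},$$
where $\widetilde{\mathcal{N}}$ is $\mathcal{N}$ with regularities shifted up by two. Besov product and composition laws applied to $\div(a\u)$, $\div(\eta\u)$, $F(\mathbb{T},\u)$ and $G(a,\u,\eta,\mathbb{T})$, together with the global smallness $X(\infty)\le Cc_0$ from Theorem \ref{dingli2}, show that the source is absorbable, hence $\mathcal{N}(t)\le C\mathcal{N}(0)+Cc_0^2$ uniformly in $t\ge 0$.

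The second step is a bootstrap on a time-weighted functional of the form
$$\mathcal{D}(t):=\sup_{\tau\in[0,t]}\sum_i\langle\tau\rangle^{(s_i-\sigma_i)/2}\|Z_i(\tau)\|^\ell_{\dot B^{s_i}_{2,1}}+\text{(high-frequency weights)},$$
where $(Z_i,\sigma_i)$ runs over $((a,\u),\sigma)$, $(\eta,\sigma-1)$, $(\mathbb{T},\sigma+1)$ and $s_i>\sigma_i$ is chosen up to the top regularity appearing in $X(t)$. Writing \eqref{m} in Duhamel form around the linearised semigroup, using the heat-kernel bound $\|\ddj Z(t)\|_{L^2}\lesssim e^{-c2^{2j}t}\|\ddj Z_0\|_{L^2}$ at low frequency and the uniform control of $\mathcal{N}(t)$ from Step 1, one gets the linear contribution of the correct size; the nonlinear contributions are controlled by $X(t)\mathcal{D}(t)$ thanks to the $L^1_t$ integrability of the top-regularity norms built into $X$. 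At high frequency, the parabolic/damping spectral gap for $j>j_0$ yields exponential decay, absorbable into any polynomial weight. Closing the bootstrap gives $\mathcal{D}(t)\le C(\mathcal{N}(0)+c_0)$.

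The third step translates $\mathcal{D}(t)\le C$ into the claimed $L^p$ rates via the inequality
$$\|\Lambda^{\gamma}f\|_{L^p}\lesssim\|f\|^\ell_{\dot B^{\gamma+n(\frac12-\frac1p)}_{2,1}}+\|f\|^h_{\dot B^{\gamma}_{p,1}},$$
a consequence of Bernstein's inequality and the embedding $\dot B^0_{p,1}\hookrightarrow L^p$. Plugging in $\gamma=\gamma_i$ and the decay exponents from Step 2 reproduces exactly the rates in the statement; the lower bounds on the admissible $\gamma_i$ correspond precisely to the condition $\gamma_i+n(\frac12-\frac1p)>\sigma_i$ needed for real interpolation between the bounded $\dot B^{\sigma_i}_{2,\infty}$-norm and a higher-regularity $\dot B^{s_i}_{2,1}$-norm to produce a finite intermediate space. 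The main obstacle I expect is the bookkeeping in Step 1: the nonlinearities must be estimated in three distinct shifted low-frequency norms at the endpoint indices $\frac{n}{2}-2$, $\frac{n}{2}-1$, $\frac{n}{2}$; in particular the quadratic $\zeta(1-I(a))\eta\nabla\eta$ inside $G$ at the low regularity $\sigma-1$, and the terms $\nabla\u\,\mathbb{T}+\mathbb{T}\nabla^\top\u$ inside $F$ at the high regularity $\sigma+1$, force a hybrid low/high-frequency splitting that relies on the $\dot B^{n/p}_{p,1}$ bounds already present in $X(t)$, and the borderline choice $\sigma=\frac{n}{2}-\frac{2n}{p}$ needs the critical endpoint of the Besov product law.
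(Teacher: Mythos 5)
Your Step 1 (propagation of the weak $\dot B_{2,\infty}$-type norms with the $\sigma,\sigma-1,\sigma+1$ shifts) and Step 3 (interpolation plus $\dot B^0_{p,1}\hookrightarrow L^p$ to read off the $L^p$ rates, with the admissible ranges of $\gamma_i$ coming from the interpolation condition) match the paper's proof essentially verbatim: Step 1 is exactly Proposition \ref{propagate}, and Step 3 is exactly \eqref{sa60}--\eqref{sa61+61+3}. Step 2, however, goes by a genuinely different route. The paper does not write the system in Duhamel form and does not run a time-weighted bootstrap; instead it returns to the raw differential inequality $\frac{d}{dt}\mathcal{E}_\infty(t)+\bar c\,\mathcal{E}_1(t)\le 0$ extracted from the proof of Theorem \ref{dingli2}, uses the uniform $\dot B_{2,\infty}$ bound of Proposition \ref{propagate} together with real interpolation to close the regularity gap between $\mathcal{E}_\infty$ and $\mathcal{E}_1$ in the form $\mathcal{E}_1\gtrsim \mathcal{E}_\infty^{1+\alpha}$ with $\alpha=\frac{4}{n-2\sigma-2}$ (estimates \eqref{sa51}, \eqref{sa54}, \eqref{sa57}, \eqref{sa52}, \eqref{sa4}), and then solves the resulting nonlinear Lyapunov ODE \eqref{sa58} directly. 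This purely energy-based device, borrowed from Guo--Wang and Xin--Xu, avoids any reference to the low-frequency structure of the linearised semigroup, whereas your Duhamel approach still needs the kernel bound $\|\ddj e^{t\mathcal{A}}Z_0\|_{L^2}\lesssim e^{-c2^{2j}t}\|\ddj Z_0\|_{L^2}$ for the coupled $(a,\u)$-block at low frequency, which amounts to a piece of spectral analysis that the paper deliberately sidesteps. Your plan should nevertheless yield the same rates, at the cost of the additional bookkeeping around the Duhamel integrals ($\int_0^{t/2}$ versus $\int_{t/2}^t$ splitting and the choice of weights $\langle\tau\rangle^{(s_i-\sigma_i)/2}$), and it trades the paper's single interpolation inequality for a more elaborate bootstrap functional $\mathcal{D}(t)$; the Lyapunov route is shorter once the a priori estimates of Section 4 are in hand. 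One small miscalibration in your discussion of the obstacle: the nonlinear estimates that carry the real work in Step 1 are not at the energy indices $\frac n2-2,\frac n2-1,\frac n2$ but at the weak indices $\sigma-1,\sigma,\sigma+1$ in $\dot B_{2,\infty}$, and the paper handles them with the three tailor-made product laws \eqref{key0}--\eqref{key} proved in the Appendix, whose hybrid low/high-frequency structure is precisely what you anticipate being needed.
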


\begin{remark}\label{yaya3}
Let $p=2$, one can deduce from above decay estimates  that
\begin{align*}
&\|\Lambda^{\gamma_1} (a,\u)\|_{L^2} \le C (1+t)^{-\frac {\gamma_1}{2}+\frac {\sigma}{2}},\\
&\|\Lambda^{\gamma_2}\eta\|_{L^2}\le C (1+t)^{-\frac {\gamma_2}{2}+\frac {\sigma-1}{2}},\\
&\|\Lambda^{\gamma_3}{\mathbb{T}}\|_{L^2}\le C (1+t)^{-\frac {\gamma_3}{2}+\frac {\sigma+1}{2}},
\end{align*}
which coincides with the  heat flows, thus our decay rate is optimal in some sense.
\end{remark}

\subsection*{\bf Scheme of the proof and organization of the paper.}
The equations of the polymer number density $\eta$ and the extra stress tensor ${\mathbb{T}}$ in \eqref{m} are two heat-type flow, moreover,
when the polymer number density $\eta$ and the extra stress tensor ${\mathbb{T}}$ vanish, the system \eqref{m} reduces to the
barotropic Navier-Stokes equations, thus, we can modify the method used  in  \cite{danchin2014} and  \cite{haspot}   to
establish the local wellposedness.
   As the process is lengthy but standard, we only sketch some details in Section 3 to complete the proof of Theorem \ref{dingli1}.

 To prove the Theorem \ref{dingli2} regarding the global solutions with small initial data,
we use the bootstrap argument, which consists of two main steps.
The first step is to establish the {\it a priori} bounds while the second is to apply
and complete the bootstrap argument by using the {\it a priori} bounds. We set the argument in Section 4 which will further be divided into three subsections.
 Main efforts are
devoted to obtaining suitable {\it a priori} bounds, and we put it in the first two subsections.  To do so,  we separate the low frequency
 from the high frequency to distinguish their different behaviors.

We shall prove the Theorem \ref{dingli3} in Section 5. Inspired by the papers \cite{xujiang2019arxiv} and \cite{zhaixiaoping},
 our main task is to establish a Lyapunov-type inequality in time for energy norms
(see \eqref{sa58}) by using the pure energy argument (independent of spectral analysis).

 In Section 2, we recall the Littlewood-Paley theory and give some useful lemmas about product laws, commutators estimates in Besov spaces.  In the Appendix A, we give some new product laws in Besov spaces.

Let us complete this section by describing the notations which will be used in the sequel.

\noindent{\rm\bf Notations: }
For two operators $A$ and $B$, we denote $[A,B]=AB-BA$, the commutator between $A$ and $B$.
The letter $C$ stands for a generic constant whose meaning is clear from the context.
We  write $a\lesssim b$ instead of $a\leq Cb$.
Given a Banach space $X$, we shall denote $\|(a,b)\|_{X}=\|a\|_{X}+\|b\|_{X}$.

For $X$ a Banach space and $I$ an interval of $\mathbb{R}$, we denote by $C(I;X)$ the set of continuous functions on $I$ with values in $X$,
and by $C_{b}(I;X)$ the subset of bounded functions of $C(I;X)$.
For $q\in [1,+\infty]$, $L^q(I;X)$ stands for the set of measurable functions on $I$ with values in $X$, such that $t\mapsto\|f(t)\|_{X}$ belongs to $L^q(I)$.
For short, we  write $L_T^q(X)$ instead of $L^q((0,T);X)$. We always let $(d_j)_{j\in\mathbb{Z}}$ be a
generic element of ${\ell}^1(\mathbb{Z})$  so that $\sum_{j\in\mathbb{Z}}d_j=1$.

\section{ Preliminaries }
For  readers' convenience, in this section, we list some basic knowledge on  Littlewood-Paley  theory.
The {Littlewood-Paley decomposition}  plays a central role in our analysis.
To define it,   fix some  smooth radial non increasing function $\chi$
supported in the ball $B(0,\frac 43)$ of $\R^n,$ and with value $1$ on, say,   $B(0,\frac34),$ then set
$\varphi(\xi)=\chi(\frac{\xi}{2})-\chi(\xi).$ We have
$$
\qquad\sum_{j\in\Z}\varphi(2^{-j}\cdot)=1\ \hbox{ in }\ \R^n\setminus\{0\}
\quad\hbox{and}\quad \mathrm{Supp}\,\varphi\subset \Big\{\xi\in\R^n : \frac34\leq|\xi|\leq\frac83\Big\}\cdotp
$$
The homogeneous dyadic blocks $\dot{\Delta}_j$ are defined on tempered distributions by
$$\dot{\Delta}_j u\stackrel{\mathrm{def}}{=}\varphi(2^{-j}D)u\stackrel{\mathrm{def}}{=}{\mathcal F}^{-1}(\varphi(2^{-j}\cdot){\mathcal F} u).
$$
Let us remark that, for any homogeneous function $A$ of order 0 smooth outside 0, we have
\begin{equation*}\label{}
\forall p\in[1,\infty],\quad\quad\|\ddj (A(D) u)\|_{L^p}\le C\|\ddj u\|_{L^p}.
\end{equation*}
\begin{definition}
Let $p,r$ be in~$[1,+\infty]$ and~$s$ in~$\R$, $u\in\mathcal{S}'(\R^n)$. We define the Besov norm by
$$
\|u\|_{{\dot{B}^s_{p,r}}}\stackrel{\mathrm{def}}{=}\big\|\big(2^{js}\|\ddj
u\|_{L^{p}}\big)_j\bigr\|_{\ell ^{r}({\mathop{\mathbb Z\kern 0pt}\nolimits})}.
$$
We then define the spaces
$\dot{B}_{p,r}^s\stackrel{\mathrm{def}}{=}\left\{u\in\mathcal{S}'_h(\R^n),\big|
\|u\|_{\dot{B}_{p,r}^s}<\infty\right\}$, where $u\in \mathcal{S}'_h(\R^n)$ means that $u\in \mathcal{S}'(\R^n)$ and $\lim_{j\to -\infty}\|\dot{S}_ju\|_{L^\infty}=0$ (see Definition 1.26 of \cite{bcd}).
\end{definition}

When employing parabolic estimates in Besov spaces, it is somehow natural to take the time-Lebesgue norm before performing the summation for computing the Besov norm. So we next introduce the following Besov-Chemin-Lerner space $\widetilde{L}_T^q(\dot{B}_{p,r}^s)$ (see\,\cite{bcd}):
$$
\widetilde{L}_T^q(\dot{B}_{p,r}^s)={\Big\{}u\in (0,+\infty)\times\mathcal{S}'_h(\R^n):
\|u\|_{\widetilde{L}_T^q(\dot{B}_{p,r}^s)}<+\infty{\Big\}},
$$
where
$$
\|u\|_{\widetilde{L}_T^q(\dot{B}_{p,r}^s)}\stackrel{\mathrm{def}}{=}\bigl{\|}2^{ks}\|\dot{\Delta}_k u(t)\|_{L^q(0,T;L^p)}\bigr{\|}_{\ell^r}.
$$
The index $T$ will be omitted if $T=+\infty$ and we shall denote by $\widetilde{\mathcal{C}}_b([0,T]; \dot{B}^s_{p,r})$ the subset of functions of $\widetilde{L}^\infty_T(\dot{B}^s_{p,r})$ which are also continuous from
$[0,T]$ to $\dot{B}^s_{p,r}$.

By the Minkowski inequality, we have the following inclusions between the
Chemin-Lerner space ${\widetilde{L}^\lambda_{T}(\dot{B}_{p,r}^s)}$ and the Bochner space ${{L}^\lambda_{T}(\dot{B}_{p,r}^s)}$:
\begin{align*}
\|u\|_{\widetilde{L}^\lambda_{T}(\dot{B}_{p,r}^s)}\le\|u\|_{L^\lambda_{T}(\dot{B}_{p,r}^s)}\hspace{0.5cm} \mathrm{if }\hspace{0.2cm}  \lambda\le r,\hspace{0.5cm}
\|u\|_{\widetilde{L}^\lambda_{T}(\dot{B}_{p,r}^s)}\ge\|u\|_{L^\lambda_{T}(\dot{B}_{p,r}^s)},\hspace{0.5cm} \mathrm{if }\hspace{0.2cm}  \lambda\ge r.
\end{align*}
The following Bernstein's lemma will be repeatedly used throughout this paper.

\begin{lemma}\label{bernstein}
Let $\mathcal{B}$ be a ball and $\mathcal{C}$ a ring of $\mathbb{R}^n$. A constant $C$ exists so that for any positive real number $\lambda$, any
non-negative integer k, any smooth homogeneous function $\sigma$ of degree m, and any couple of real numbers $(p, q)$ with
$1\le p \le q\le\infty$, there hold
\begin{align*}
&&\mathrm{Supp} \,\hat{u}\subset\lambda \mathcal{B}\Rightarrow\sup_{|\alpha|=k}\|\partial^{\alpha}u\|_{L^q}\le C^{k+1}\lambda^{k+n(\frac1p-\frac1q)}\|u\|_{L^p},\\
&&\mathrm{Supp} \,\hat{u}\subset\lambda \mathcal{C}\Rightarrow C^{-k-1}\lambda^k\|u\|_{L^p}\le\sup_{|\alpha|=k}\|\partial^{\alpha}u\|_{L^p}
\le C^{k+1}\lambda^{k}\|u\|_{L^p},\\
&&\mathrm{Supp} \,\hat{u}\subset\lambda \mathcal{C}\Rightarrow \|\sigma(D)u\|_{L^q}\le C_{\sigma,m}\lambda^{m+n(\frac1p-\frac1q)}\|u\|_{L^p}.
\end{align*}
\end{lemma}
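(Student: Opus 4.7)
My plan is to treat all three estimates by the same template: the spectral localization of $u$ lets me realize the operator ($\partial^\alpha$ or $\sigma(D)$) as convolution against an explicit Schwartz kernel, after which Young's inequality and a single change of variables yield the conclusion. I fix once and for all two auxiliary bumps: $\tilde\chi\in C_c^\infty(\R^n)$ equal to $1$ on a neighbourhood of $\mathcal B$, and $\tilde\varphi\in C_c^\infty(\R^n\setminus\{0\})$ equal to $1$ on a neighbourhood of $\mathcal C$. Throughout, the exponents $p,q,r$ are linked by Young's relation $1+1/q=1/r+1/p$, which is equivalent to $n(1-1/r)=n(1/p-1/q)$.

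For the first inequality, set $\phi_\lambda:=\mathcal F^{-1}(\tilde\chi(\cdot/\lambda))$. The support hypothesis gives $u=\phi_\lambda*u$, hence $\partial^\alpha u=(\partial^\alpha\phi_\lambda)*u$, and Young combined with the scaling identity $\|\partial^\alpha\phi_\lambda\|_{L^r}=\lambda^{|\alpha|+n(1/p-1/q)}\|\partial^\alpha\phi_1\|_{L^r}$ produces the stated bound. Iterating one derivative at a time gives the $C^{k+1}$ constant with $C$ depending only on $\|\nabla\phi_1\|_{L^r}$. The upper half of the second inequality is exactly this statement specialized to $q=p$. The third inequality works the same way: setting $\tilde\sigma_\lambda(\xi):=\sigma(\xi)\tilde\varphi(\xi/\lambda)$, one has $\sigma(D)u=\mathcal F^{-1}(\tilde\sigma_\lambda)*u$; homogeneity of $\sigma$ of degree $m$ gives $\mathcal F^{-1}(\tilde\sigma_\lambda)(x)=\lambda^{n+m}[\mathcal F^{-1}(\sigma\tilde\varphi)](\lambda x)$, and since $\sigma\tilde\varphi\in C_c^\infty(\R^n\setminus\{0\})$ its inverse Fourier transform is Schwartz, hence in every $L^r$, so Young closes the bound with the announced factor $\lambda^{m+n(1/p-1/q)}$.

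The only genuinely non-routine point is the lower half of the second inequality, which uses crucially that $\mathcal C$ is separated from the origin. I would define $\psi_i(\xi):=\tilde\varphi(\xi)\xi_i/|\xi|^2$ for $i=1,\ldots,n$, each smooth and compactly supported away from $0$, so that $\sum_i(i\xi_i)\psi_i(\xi)=i\tilde\varphi(\xi)$. Combining this identity with $\hat u=\tilde\varphi(\cdot/\lambda)\hat u$ and rescaling leads to $i\lambda\,\hat u(\xi)=\sum_i\psi_i(\xi/\lambda)\widehat{\partial_i u}(\xi)$, whence Young implies $\lambda\|u\|_{L^p}\le C\sum_i\|\partial_i u\|_{L^p}$. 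Iterating this step $k$ times (each $\partial^\beta u$ still has spectrum in $\lambda\mathcal C$) yields $\lambda^k\|u\|_{L^p}\le C^k\sup_{|\alpha|=k}\|\partial^\alpha u\|_{L^p}$, which completes the proof. The hard part, therefore, is really just the availability of the $\psi_i$; everything else is careful bookkeeping of scaling exponents.
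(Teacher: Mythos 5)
The paper does not supply a proof of this lemma; it is stated as the standard Bernstein inequality and implicitly attributed to the textbook of Bahouri--Chemin--Danchin cited elsewhere in Section~2. Your argument is correct and is in fact the canonical proof of that standard result: reproduce the function by convolution against a rescaled Schwartz kernel obtained from a frequency bump, apply Young's inequality, and read off the power of $\lambda$ from the scaling $\phi_\lambda(x)=\lambda^n\phi_1(\lambda x)$. The reverse inequality via the vector field $\psi_i(\xi)=\tilde\varphi(\xi)\xi_i/|\xi|^2$ is exactly the device used in \cite{bcd}; the only reason it works is, as you say, that the annulus is bounded away from the origin so that $|\xi|^{-2}$ causes no singularity. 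Two small bookkeeping remarks: in your first part, the iterated $L^p\to L^p$ steps require the $L^1$ norm of $\nabla\phi_\lambda$ (not $L^r$), the $L^r$ norm only entering in the final $L^p\to L^q$ step, and the case $k=0$ additionally uses $\|\phi_1\|_{L^r}$; and in the lower bound, each iteration multiplies the number of terms by $n$, so the constant you obtain is really $(C_0 n)^k$, which is absorbed into $C^{k+1}$ by choosing $C\ge C_0 n$. Neither point is a gap, merely imprecise tracking of which norm of the fixed kernel enters.
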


Next we  recall a few nonlinear estimates in Besov spaces which may be
obtained by means of paradifferential calculus.
Here, we recall the decomposition in the homogeneous context:
\begin{align}\label{bony}
uv=\dot{T}_uv+\dot{T}_vu+\dot{R}(u,v)=\dot{T}_uv+\dot{T}'_vu,
\end{align}
where
$$\dot{T}_uv\stackrel{\mathrm{def}}{=}\sum_{j\in \mathbb{Z}}\dot{S}_{j-1}u\dot{\Delta}_jv, \hspace{0.5cm}\dot{R}(u,v)\stackrel{\mathrm{def}}{=}\sum_{j\in Z}
\dot{\Delta}_ju\widetilde{\dot{\Delta}}_jv,$$
and
$$ \widetilde{\dot{\Delta}}_jv\stackrel{\mathrm{def}}{=}\sum_{|j-j'|\le1}\dot{\Delta}_{j'}v,\hspace{0.5cm} \dot{T}'_vu\stackrel{\mathrm{def}}{=}\sum_{j\in Z}\dot{S}_{j+2}v\dot{\Delta}_ju.$$

The paraproduct $\dot{T}$ and the remainder $\dot{R}$ operators satisfy the following
continuous properties.

\begin{lemma}[\cite{bcd}]\label{fangji}
For all $s\in\mathbb{R}$, $\sigma\ge0$, and $1\leq p, p_1, p_2\leq\infty,$ the
paraproduct $\dot T$ is a bilinear, continuous operator from $\dot{B}_{p_1,1}^{-\sigma}\times \dot{B}_{p_2,1}^s$ to
$\dot{B}_{p,1}^{s-\sigma}$ with $\frac{1}{p}=\frac{1}{p_1}+\frac{1}{p_2}$. The remainder $\dot R$ is bilinear continuous from
$\dot{B}_{p_1, 1}^{s_1}\times \dot{B}_{p_2,1}^{s_2}$ to $
\dot{B}_{p,1}^{s_1+s_2}$ with
$s_1+s_2>0$, and $\frac{1}{p}=\frac{1}{p_1}+\frac{1}{p_2}$.
\end{lemma}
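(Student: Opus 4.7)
The plan is to prove both estimates by dyadic spectral localization: apply $\dot\Delta_k$ to the bilinear quantity, use the Fourier support of each summand to collapse the sum to either a bounded diagonal or a one-sided geometric tail, and conclude with H\"older's inequality, Bernstein's inequality (Lemma \ref{bernstein}), and a discrete Young convolution on $\ell^1(\mathbb Z)$.

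For the paraproduct $\dot T_u v = \sum_j \dot S_{j-1}u\,\dot\Delta_j v$, each summand has Fourier transform supported in an annulus of size $2^j$, since the spectrum of $\dot S_{j-1}u$ lies in a ball of radius $\sim 2^{j-1}$ while that of $\dot\Delta_j v$ lies in an annulus of size $2^j$. Hence $\dot\Delta_k(\dot S_{j-1}u\,\dot\Delta_j v)$ vanishes unless $|k-j|\le N_0$ for some absolute $N_0$. H\"older's inequality with $\tfrac1p=\tfrac1{p_1}+\tfrac1{p_2}$ gives
\[
\|\dot\Delta_k(\dot T_u v)\|_{L^p}\lesssim \sum_{|j-k|\le N_0}\|\dot S_{j-1}u\|_{L^{p_1}}\,\|\dot\Delta_j v\|_{L^{p_2}}.
\]
Writing $\|\dot S_{j-1}u\|_{L^{p_1}}\le\sum_{j'\le j-2}\|\dot\Delta_{j'}u\|_{L^{p_1}}$ and inserting the factor $2^{-j'\sigma}\cdot 2^{j'\sigma}$, the geometric series $\sum_{j'\le j-2}2^{(j-j')\sigma}$ converges whenever $\sigma\ge 0$ (uniformly in $j$; the case $\sigma=0$ is handled by the direct bound $\|\dot S_{j-1}u\|_{L^{p_1}}\lesssim\|u\|_{\dot B^{0}_{p_1,1}}$), yielding $\|\dot S_{j-1}u\|_{L^{p_1}}\lesssim 2^{j\sigma}\|u\|_{\dot B^{-\sigma}_{p_1,1}}$. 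Multiplying by $2^{k(s-\sigma)}$, shifting indices under $|k-j|\le N_0$, and summing in $k$ then reproduces exactly the norm $\|v\|_{\dot B^s_{p_2,1}}$.

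For the remainder $\dot R(u,v)=\sum_j \dot\Delta_j u\,\widetilde{\dot\Delta}_j v$, the Fourier transform of each summand lies only in a \emph{ball} of radius $\sim 2^j$, so $\dot\Delta_k$ can extract a contribution from every $j\ge k-N_1$. H\"older's inequality (together with Bernstein's inequality when necessary, to convert ball-support into $L^p$-gain) produces
\[
\|\dot\Delta_k\dot R(u,v)\|_{L^p}\lesssim \sum_{j\ge k-N_1}\|\dot\Delta_j u\|_{L^{p_1}}\,\|\widetilde{\dot\Delta}_j v\|_{L^{p_2}}.
\]
Multiplying by $2^{k(s_1+s_2)}$ and factoring out $2^{j(s_1+s_2)}$ yields the one-sided kernel $2^{(k-j)(s_1+s_2)}\mathbf 1_{j\ge k-N_1}$, whose absolute summability forces precisely $s_1+s_2>0$. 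A discrete Young convolution against $(d_j)\in\ell^1$ then delivers the desired $\dot B^{s_1+s_2}_{p,1}$ bound.

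The main obstacle is the remainder: its summands are not spectrally localized in annuli but only in balls, so the coupling between the summation index $j$ and the output dyadic scale $k$ becomes one-sided, and convergence of the geometric tail $\sum_{j\ge k-N_1}2^{(k-j)(s_1+s_2)}$ is exactly what forces the strict positivity $s_1+s_2>0$. The paraproduct, by contrast, is essentially diagonal in $(k,j)$ thanks to the annular support, so the only subtlety there is the borderline case $\sigma=0$, which is absorbed by the uniform estimate on $\|\dot S_{j-1}u\|_{L^{p_1}}$.
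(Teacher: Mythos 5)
The paper does not give a proof of this lemma; it is quoted directly from Bahouri--Chemin--Danchin \cite{bcd}. Your argument is the standard textbook proof and is essentially correct in structure, but there is a sign slip in the paraproduct step that is worth fixing. You claim that the geometric series $\sum_{j'\le j-2}2^{(j-j')\sigma}$ converges whenever $\sigma\ge0$; with $j'\le j-2$ this series is $\sum_{m\ge 2}2^{m\sigma}$, which diverges for every $\sigma\ge0$. The factor you want is $2^{(j'-j)\sigma}$, not $2^{(j-j')\sigma}$. Moreover, since all third indices here are $r=1$, you do not need any geometric summability at all: writing $2^{j'\sigma}=2^{j\sigma}\,2^{(j'-j)\sigma}$ and using that $2^{(j'-j)\sigma}\le1$ uniformly for $j'\le j-2$, $\sigma\ge0$, gives directly
\begin{align*}
\|\dot S_{j-1}u\|_{L^{p_1}}\le 2^{j\sigma}\sum_{j'\le j-2}2^{-j'\sigma}\|\dot\Delta_{j'}u\|_{L^{p_1}}\le 2^{j\sigma}\|u\|_{\dot B^{-\sigma}_{p_1,1}},
\end{align*}
with no separate carve-out for $\sigma=0$. (Convergence of the geometric tail, hence strict positivity $\sigma>0$, becomes relevant only when the third index is $\infty$ or a general $r$, where one must apply H\"older on sequences.) The remainder part is argued correctly: the ball-only spectral localization forces the one-sided sum $j\ge k-N_1$, H\"older with $\tfrac1p=\tfrac1{p_1}+\tfrac1{p_2}$ makes Bernstein unnecessary in this version, and the discrete Young convolution against the kernel $2^{(k-j)(s_1+s_2)}\mathbf 1_{j\ge k-N_1}$ requires exactly $s_1+s_2>0$, as you say.
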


\begin{lemma}\label{daishu}{\rm(\cite[Proposition A.1]{xujiang})}
Let $1\leq p, q\leq \infty$, $s_1\leq \frac{n}{q}$, $s_2\leq n\min\{\frac1p,\frac1q\}$ and $s_1+s_2>n\max\{0,\frac1p +\frac1q -1\}$. For $\forall (u,v)\in\dot{B}_{q,1}^{s_1}({\mathbb R} ^n)\times\dot{B}_{p,1}^{s_2}({\mathbb R} ^n)$, we have
\begin{align*}%\label{product inequality}
\|uv\|_{\dot{B}_{p,1}^{s_1+s_2 -\frac{n}{q}}}\lesssim \|u\|_{\dot{B}_{q,1}^{s_1}}\|v\|_{\dot{B}_{p,1}^{s_2}}.
\end{align*}
\end{lemma}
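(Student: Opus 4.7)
The plan is to apply Bony's paraproduct decomposition \eqref{bony},
\[
uv = \dot{T}_u v + \dot{T}_v u + \dot{R}(u,v),
\]
and to control each of the three pieces in $\dot B^{s_1+s_2-n/q}_{p,1}$ using Lemma \ref{fangji} together with the standard Besov embedding $\dot B^s_{q,1}\hookrightarrow \dot B^{s-n/q+n/r}_{r,1}$ valid for every $r\ge q$. The three hypotheses on the exponents $(s_1,s_2)$ in the statement will arise, one for each piece.

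For the first paraproduct $\dot T_u v$, the hypothesis $s_1\le n/q$ makes $\sigma_1\stackrel{\mathrm{def}}{=}n/q-s_1$ non-negative, so the embedding $u\in \dot B^{s_1}_{q,1}\hookrightarrow \dot B^{-\sigma_1}_{\infty,1}$ holds. Feeding this into Lemma \ref{fangji} with exponents $p_1=\infty$, $p_2=p$ yields
\[
\|\dot T_u v\|_{\dot B^{s_1+s_2-n/q}_{p,1}}\lesssim \|u\|_{\dot B^{s_1}_{q,1}}\|v\|_{\dot B^{s_2}_{p,1}}.
\]
For $\dot T_v u$, a symmetric argument works but splits into two cases: if $p\le q$, set $\sigma_2=n/q-s_2\ge 0$, apply Lemma \ref{fangji} with $1/p_1=1/p-1/q$ and $p_2=q$, and embed $v\in\dot B^{s_2}_{p,1}\hookrightarrow \dot B^{-\sigma_2}_{p_1,1}$; if $p\ge q$, use $u\in \dot B^{s_1}_{q,1}\hookrightarrow \dot B^{s_1-n/q+n/p}_{p,1}$ paired with $v\in \dot B^{s_2-n/p}_{\infty,1}$. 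This is exactly where the bound $s_2\le n\min(1/p,1/q)$ is needed: each case requires a different one of the two quantities $n/p$ or $n/q$ to dominate $s_2$.

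The remainder $\dot R(u,v)$ is handled by Lemma \ref{fangji} applied with $p_1=q$, $p_2=p$, which produces $\dot R(u,v)\in\dot B^{s_1+s_2}_{r,1}$ where $1/r=1/p+1/q$, provided $s_1+s_2>0$. A further Besov embedding descends this space to the target $\dot B^{s_1+s_2-n/q}_{p,1}$. When $1/p+1/q\le 1$ the embedding is free, but when $1/p+1/q>1$, i.e. $r<1$, one can no longer apply the remainder lemma directly at index $r$; instead one goes through an intermediate Lebesgue index $r_\star\ge 1$ and pays exactly $n(1/p+1/q-1)$ derivatives, which is why one needs $s_1+s_2>n(1/p+1/q-1)$. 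This explains the full constraint $s_1+s_2>n\max(0,1/p+1/q-1)$.

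The main obstacle will be bookkeeping the endpoint cases, typically $s_1=n/q$, $s_2=n\min(1/p,1/q)$ and $1/p+1/q=1$ for the remainder, where strict inequalities in the intermediate embeddings fail. At these endpoints one must rely on the $\ell^1$-summability built into the Besov norm $\dot B^{\,\cdot}_{\,\cdot,1}$ (rather than the Lebesgue/Besov embedding winning extra decay), and keep careful track of the almost-orthogonality of the three pieces in Bony's decomposition; this is exactly why the statement is formulated in the $\dot B^{\,\cdot}_{\,\cdot,1}$ scale rather than in the more general $\dot B^{\,\cdot}_{\,\cdot,r}$ one.
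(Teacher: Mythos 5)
The paper does not supply its own proof of this lemma; it simply cites \cite[Proposition A.1]{xujiang}, where the result is established exactly by the Bony-decomposition route you describe. Your sketch is correct in all essentials: the paraproduct $\dot T_u v$ uses $s_1\le n/q$ via the embedding $\dot B^{s_1}_{q,1}\hookrightarrow\dot B^{s_1-n/q}_{\infty,1}$; the paraproduct $\dot T_v u$ splits on whether $p\le q$ or $p\ge q$ and in each branch consumes $s_2\le n\min(1/p,1/q)$; and the remainder uses the condition $s_1+s_2>n\max(0,1/p+1/q-1)$, with the $r_\star=1$ intermediate Lebesgue index plus ball-supported Bernstein when $1/p+1/q>1$. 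One small imprecision worth noting: at the endpoints $s_1=n/q$ or $s_2=n\min(1/p,1/q)$ nothing actually ``fails'' in the embeddings — the conditions in the statement and in Lemma~\ref{fangji} are already $\le$, and the $\ell^1$-summability is what lets $\dot B^0_{\infty,1}\hookrightarrow L^\infty$ close the $\sigma=0$ case — whereas the genuinely sharp strict inequality is $s_1+s_2>n\max(0,1/p+1/q-1)$, since the remainder only lands in $\dot B^0_{p,\infty}$, not $\dot B^0_{p,1}$, when $s_1+s_2=0$.
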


\begin{lemma}\label{good}
Let   $n\ge 2$ and $
2\leq p \leq \min(4,{2n}/({n-2}))$ and, additionally,$  p\not=4$ { if }\ $n=2$. For any
 $u\in\dot{B}_{p,1}^{\frac {n}{p}}(\R^n), v^\ell\in\dot{B}_{2,1}^{\frac {n}{2}-1 }(\R^n)$ and $ v^h\in\dot{B}_{p,1}^{\frac {n}{p}-1}(\R^n),$ we have
\begin{align}\label{pengyou}
&\|(uv)^\ell\|_{\dot{B}_{2,1}^{\f n2-1}}\lesssim(\|v^\ell\|_{\dot{B}_{2,1}^{\f n2-1}}+\|v^h\|_{\dot{B}_{p,1}^{\frac {n}{p}-1}})\|u\|_{\dot{B}_{p,1}^{\frac {n}{p}}}.
\end{align}
\end{lemma}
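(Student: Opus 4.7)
My strategy is to split $v = v^\ell + v^h$ so that $(uv)^\ell = (uv^\ell)^\ell + (uv^h)^\ell$, and estimate the two pieces separately using the product law of Lemma~\ref{daishu}, Bony's paraproduct decomposition, and Bernstein-type embeddings tailored to the prescribed $p$-range. For $(uv^\ell)^\ell$ I would directly invoke Lemma~\ref{daishu} with the first factor in $\dot B_{p,1}^{n/p}$ (so $q=p$, $s_1=n/p$) and the second in $\dot B_{2,1}^{n/2-1}$ ($s_2 = n/2-1$); the product then sits in $\dot B_{2,1}^{n/p+n/2-1-n/p} = \dot B_{2,1}^{n/2-1}$. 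The admissibility conditions reduce to $s_2 \leq n\min(1/p,1/2) = n/p$ (equivalent to $p\leq 2n/(n-2)$, part of our assumption) and $s_1+s_2 > 0$ (automatic since $n/p>0$ and $n/2\geq 1$), so the low-frequency truncation inherits the desired bound.

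For $(uv^h)^\ell$ I would apply Bony's decomposition $uv^h = \dot T_u v^h + \dot T_{v^h}u + \dot R(u,v^h)$. The remainder is the cleanest piece: Lemma~\ref{fangji} gives $\dot R(u,v^h) \in \dot B_{p/2,1}^{2n/p-1}$, the condition $s_1+s_2>0$ becoming exactly $2n/p > 1$, i.e., $p<2n$, which is precisely the reason for excluding $p=4$ when $n=2$. Since $p/2\leq 2$, the Besov embedding $\dot B_{p/2,1}^{2n/p-1} \hookrightarrow \dot B_{2,1}^{n/2-1}$ is valid with exact index balance $(2n/p-1)-n(2/p-1/2) = n/2-1$, so the remainder contribution is controlled by $\|u\|_{\dot B_{p,1}^{n/p}}\|v^h\|_{\dot B_{p,1}^{n/p-1}}$. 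For the two paraproducts, the crucial structural observation is that each summand $\dot S_{k-1}u\cdot\dot\Delta_k v^h$ (and the analogue for $\dot T_{v^h}u$) has Fourier spectrum in an annulus of size $\sim 2^k$ with $k > j_0$; hence the low-frequency projection of each paraproduct involves only a bounded number of dyadic blocks $j$ clustered near $j_0$. On these finitely many blocks I would combine Bernstein's inequality $\|\dot\Delta_j f\|_{L^2}\lesssim 2^{jn(2/p-1/2)}\|\dot\Delta_j f\|_{L^{p/2}}$ (valid because $p/2\leq 2$) with H\"older's $L^p \times L^p \to L^{p/2}$; the resulting geometric sum $\sum_{j\leq j_0} 2^{j(2n/p-1)} \sim 2^{j_0(2n/p-1)}$ converges exactly because $p<2n$ and precisely cancels the powers of $2^{j_0}$ coming from the Besov weights of $u$ and $v^h$.

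The main obstacle is controlling $\|\dot S_{k-1}u\|_{L^p}$ for those $k$ near $j_0$, since $u\in\dot B_{p,1}^{n/p}$ does not a priori lie in $L^p$ at this critical Besov scaling. I would resolve this by further decomposing $u = u^\ell + u^h$: the high-frequency piece satisfies $\|\dot S_{k-1}u^h\|_{L^p}\lesssim 2^{-j_0 n/p}\|u\|_{\dot B_{p,1}^{n/p}}$ via the convergent geometric series $\sum_{k'>j_0} 2^{-k'n/p}\cdot(2^{k'n/p}\|\dot\Delta_{k'}u\|_{L^p})$, which is finite thanks to the strictly positive smoothness $n/p>0$ and the restriction $k'>j_0$. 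For the low-frequency piece, I would exploit the Fourier-support vanishing $\dot T_{v^h}u^\ell = 0$ (the active $\dot S_{k-1}v^h$ has $k > j_0+1$ while $\dot\Delta_k u^\ell = 0$ for $k > j_0$), together with the reduction $\dot T_{u^\ell}v^h = u^\ell v^h - \dot R(u^\ell, v^h)$, where the remainder collapses to the single term $\dot\Delta_{j_0}u \cdot \widetilde{\dot\Delta}_{j_0}v^h$ controlled by the same Bernstein--H\"older scheme, and the term $u^\ell v^h$ is estimated directly by the finite-Fourier-support analysis (its low-frequency part receives contributions from only a bounded range of $v^h$-blocks near $2^{j_0}$, each handled as a single-block product via Bernstein from $L^{p/2}$ to $L^2$).
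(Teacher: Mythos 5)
Your strategy (split $v=v^\ell+v^h$, handle $(uv^\ell)^\ell$ by Lemma~\ref{daishu}, then Bony on $uv^h$ with blockwise Bernstein) is genuinely different from the paper's, which never splits $v$; instead the paper writes
$\dot S_{j_0+1}(uv)=\dot T_u\dot S_{j_0+1}v+\dot S_{j_0+1}\bigl(\dot T_v u+\dot R(v,u)\bigr)+[\dot S_{j_0+1},\dot T_u]v$,
estimates the first piece with $\|u\|_{L^\infty}$, the second with the $L^{p^*}\!\times L^p\to L^2$ pairing $\tfrac1{p^*}=\tfrac12-\tfrac1p$, and then invokes the paraproduct--truncation commutator estimate from Lemma~6.1 of \cite{helingbing}, which \emph{gains one derivative} on $u$ (hence $\nabla u\in\dot B^{\,n/p^*-1}_{p^*,1}$, a negative index that makes the low-frequency tail of $u$ summable). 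That commutator is exactly the device that absorbs the dyadic blocks clustered near $2^{j_0}$.

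Your plan correctly disposes of $(uv^\ell)^\ell$, of $\dot R(u,v^h)$, and of the paraproduct pieces involving $u^h$. The genuine gap is in the remaining piece, the one you write as $\dot T_{u^\ell}v^h = u^\ell v^h-\dot R(u^\ell,v^h)$, and ultimately in $(u^\ell v^h)^\ell$. You state this is ``estimated directly by the finite-Fourier-support analysis \dots\ each handled as a single-block product via Bernstein from $L^{p/2}$ to $L^2$.'' But that scheme, after H\"older, requires $\|u^\ell\|_{L^p}$, and for $u\in\dot B^{n/p}_{p,1}$ with $n/p>0$ the low-frequency tail $\sum_{k\le j_0}\|\dot\Delta_k u\|_{L^p}=\sum_{k\le j_0}2^{-kn/p}\bigl(2^{kn/p}\|\dot\Delta_k u\|_{L^p}\bigr)$ diverges --- this is exactly the obstacle you yourself identified for $\|\dot S_{k-1}u\|_{L^p}$, and your $u=u^\ell+u^h$ decomposition only resolves it for the $u^h$ part. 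Nor do the other pairings close the gap: $L^\infty\times L^2\to L^2$ would require $\|\dot\Delta_{j'}v^h\|_{L^2}$, which for $p>2$ cannot be bounded by $\|\dot\Delta_{j'}v^h\|_{L^p}$ (Bernstein only raises the Lebesgue exponent on annuli); and $L^{p^*}\times L^p\to L^2$ would require $\|u^\ell\|_{L^{p^*}}$, whose low-frequency sum $\sum_{k\le j_0}2^{kn(1/p-1/2)}(\cdot)$ again diverges when $p>2$. So the passage from $L^p$-based data to the $L^2$-based target on the troublesome blocks near $2^{j_0}$ is precisely what your argument does not achieve, and it is the reason the paper needs the derivative-gaining commutator estimate rather than a direct blockwise Bernstein--H\"older count.
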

\begin{proof}
We first use  Bony's decomposition to write
\begin{align}\label{D9}
\dot S_{j_0+1} (uv)=\dot{T}_{u}\dot S_{j_0+1}  v+\dot S_{j_0+1} \bigl(\dot{T}_{v} u+ \dot{R}(v,u)\bigr)
+[\dot S_{j_0+1} ,\dot{T}_{u}]v.
\end{align}
Applying Lemma \ref{fangji}, we have
\begin{align}\label{D11-1}
\|\dot{T}_{u}\dot S_{j_0+1}  v\|_{\dot{B}_{2,1}^{\f n2-1}}\lesssim&\|u\|_{L^\infty}\|v^\ell\|_{\dot{B}_{2,1}^{\f n2-1}}
\lesssim\|v^\ell\|_{\dot{B}_{2,1}^{\f n2-1}}\|u\|_{\dot{B}_{p,1}^{\frac {n}{p}}},\nonumber
\end{align}
and $(\frac  1{p*}=\frac  12-\frac1p)$
\begin{align}
\|\dot S_{j_0+1} \bigl(\dot{T}_{v} u+ \dot{R}(v,u)\bigr)\|_{\dot{B}_{2,1}^{\f n2-1}}\lesssim&\|v\|_{\dot{B}_{p^*,1}^{\frac {n}{p^*}-1}}\|u\|_{\dot{B}_{p,1}^{\frac {n}{p}}}
\lesssim\|v\|_{\dot{B}_{p,1}^{\frac {n}{p}-1}}\|u\|_{\dot{B}_{p,1}^{\frac {n}{p}}}.
\end{align}
By Lemma 6.1 in \cite{helingbing},  the term about the commutator can be bounded
\begin{align}\label{D11}
\|[\dot S_{j_0+1} ,\dot{T}_{u}]v\|_{\dot{B}_{2,1}^{\f n2-1}}\lesssim&\|\nabla u\|_{\dot{B}_{p^*,1}^{\frac {n}{p^*}-1}}\|v\|_{\dot{B}_{p,1}^{\frac {n}{p}-1}}
\lesssim\|v\|_{\dot{B}_{p,1}^{\frac {n}{p}-1}}\|u\|_{\dot{B}_{p,1}^{\frac {n}{p}}}.
\end{align}
Thus, the combination of (\ref{D9})--(\ref{D11}) shows the validity of  \eqref{pengyou}.

\end{proof}

We also need the following Classical commutator's estimate:
\begin{lemma}{\rm(\cite[Lemma 2.100]{bcd})}\label{jiaohuanzi}
Let $1\leq p\leq \infty$, $-n\min\left\{\frac1p,1-\frac{1}{p}\right\}<s\leq \frac np$. For any
$v\in \dot{B}_{p,1}^{s}(\R^n)$ and $\nabla u\in \dot{B}_{p,1}^{\frac{n}{p}}(\R^n)$,
 there holds
$$
\big\|[\dot{\Delta}_j,u\cdot \nabla ]v\big\|_{L^p}\lesssim d_j 2^{-js}\|\nabla u\|_{\dot{B}_{p,1}^{\frac{n}{p}}}\|v\|_{\dot{B}_{p,1}^{s}}.
$$
\end{lemma}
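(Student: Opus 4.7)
The plan is to apply Bony's homogeneous paradifferential decomposition \eqref{bony} to the product $u\cdot\nabla v$ and commute it with $\dot{\Delta}_j$. This reduces the proof to three pieces: a genuine paraproduct commutator, a residual error coming from the symmetric paraproduct $\dot{T}_{\partial_k v}u^k$, and an analogous piece for the remainder $\dot{R}(u^k,\partial_k v)$.

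Since $\dot{\Delta}_j$ commutes with $\partial_k$, the decomposition $u\cdot\nabla v=\dot{T}_{u^k}\partial_k v+\dot{T}_{\partial_k v}u^k+\dot{R}(u^k,\partial_k v)$ (summation in $k$ implicit) yields
\begin{align*}
[\dot{\Delta}_j,u\cdot\nabla]v={}&[\dot{\Delta}_j,\dot{T}_{u^k}]\partial_k v+\bigl(\dot{\Delta}_j\dot{T}_{\partial_k v}u^k-\dot{T}_{\partial_k\dot{\Delta}_j v}u^k\bigr)\\
&+\bigl(\dot{\Delta}_j\dot{R}(u^k,\partial_k v)-\dot{R}(u^k,\partial_k\dot{\Delta}_j v)\bigr).
\end{align*}
For the main term $[\dot{\Delta}_j,\dot{T}_{u^k}]\partial_k v$, spectral localization confines the paraproduct sum to indices with $|j-j'|\leq 4$, so
$$[\dot{\Delta}_j,\dot{T}_{u^k}]\partial_k v=\sum_{|j-j'|\leq 4}[\dot{\Delta}_j,\dot{S}_{j'-1}u^k]\partial_k\dot{\Delta}_{j'}v.$$
Writing $\dot{\Delta}_j$ as convolution with $2^{jn}h(2^j\cdot)$ for some $h\in\mathcal{S}(\R^n)$ and using a first-order Taylor expansion of $\dot{S}_{j'-1}u^k$ together with the fact that $|z|h(z)\in L^1(\R^n)$ produces the classical bound
$$\|[\dot{\Delta}_j,\dot{S}_{j'-1}u^k]g\|_{L^p}\lesssim 2^{-j}\|\nabla\dot{S}_{j'-1}u^k\|_{L^\infty}\|g\|_{L^p}.$$
Combining this with Bernstein $\|\partial_k\dot{\Delta}_{j'}v\|_{L^p}\sim 2^{j'}\|\dot{\Delta}_{j'}v\|_{L^p}$, the embedding $\dot{B}_{p,1}^{n/p}\hookrightarrow L^\infty$ applied to $\nabla u$, and the cancellation $2^{-j}\cdot 2^{j'}=O(1)$ produces exactly the target bound $d_j 2^{-js}\|\nabla u\|_{\dot{B}_{p,1}^{n/p}}\|v\|_{\dot{B}_{p,1}^s}$ for this piece.

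For the remaining two groups I would invoke Lemma \ref{fangji}, which places $\dot{T}_{\partial_k v}u^k$ and $\dot{R}(u^k,\partial_k v)$ in $\dot{B}_{p,1}^s$ with norm $\lesssim\|v\|_{\dot{B}_{p,1}^s}\|\nabla u\|_{\dot{B}_{p,1}^{n/p}}$, so that applying $\dot{\Delta}_j$ extracts the desired $d_j 2^{-js}$ weight. The subtracted counterparts $\dot{T}_{\partial_k\dot{\Delta}_j v}u^k$ and $\dot{R}(u^k,\partial_k\dot{\Delta}_j v)$ are handled by spectral support constraints: since $\partial_k\dot{\Delta}_j v$ is frequency-localized near $2^j$ of $L^p$ size $\lesssim d_j 2^{-j(s-1)}$, only dyadic shells $\dot{\Delta}_{j'}u^k$ with $j'\gtrsim j$ contribute, and a direct H\"older-plus-Bernstein summation closes the bound. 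The main obstacle is calibrating the admissible range of $s$: the upper bound $s\leq n/p$ is precisely what is needed for $\dot{T}_{\partial_k v}$ to remain continuous with $\partial_k v$ in the ``low-frequency'' slot, while the lower bound $s>-n\min(1/p,1-1/p)$ is dictated by requiring a strictly positive total smoothness for the remainder $\dot{R}(u^k,\partial_k v)$ after H\"older, which is exactly the threshold where its $\ell^1$ summability over dyadic shells can be preserved.
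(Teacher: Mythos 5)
The paper does not supply a proof here; the lemma is cited verbatim from Lemma 2.100 of Bahouri--Chemin--Danchin [bcd], so there is no in-paper argument to compare against. Evaluated on its own terms, your overall scheme—Bony decomposition of $u\cdot\nabla v$, split of $[\dot\Delta_j,u\cdot\nabla]v$ into the paraproduct commutator $[\dot\Delta_j,\dot T_{u^k}]\partial_kv$ and two residual groups—is exactly the template used in [bcd], and your treatment of the first group (kernel representation of $\dot\Delta_j$, first-order Taylor expansion of $\dot S_{j'-1}u$, Bernstein, $\dot B^{n/p}_{p,1}\hookrightarrow L^\infty$) is correct and delivers the $d_j2^{-js}$ weight. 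The handling of the subtracted pieces $\dot T_{\partial_k\dot\Delta_j v}u^k$, $\dot R(u^k,\partial_k\dot\Delta_j v)$ by spectral localization is also fine as sketched.

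The genuine gap is in how you propose to bound $\dot\Delta_j\dot T_{\partial_kv}u^k$ and especially $\dot\Delta_j\dot R(u^k,\partial_kv)$ ``by Lemma \ref{fangji}.'' As stated in the paper, Lemma \ref{fangji} requires $s_1+s_2>0$ for the remainder, and with the natural exponent assignments ($u^k\in\dot B^1_{\infty,1}$, $\partial_kv\in\dot B^{s-1}_{p,1}$, so that $1/p_1+1/p_2=1/p$) this forces $s>0$. The lemma's range $s>-n\min(1/p,1-1/p)$ permits negative $s$, so Lemma \ref{fangji} as quoted cannot close the proof there. To reach the full range one must estimate $\dot\Delta_j\dot R(u^k,\partial_kv)$ by hand: apply H\"older so the two dyadic blocks pair to $L^1$ (via $L^{p'}\times L^p$; or $L^{p/2}$ via $L^p\times L^p$ when $p\ge 2$), then use Bernstein to go back to $L^p$ at scale $2^j$, paying $2^{jn/p'}$ (resp. $2^{jn/p}$). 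The resulting geometric sum over $j'\gtrsim j$ converges exactly when $s+n/p'>0$ (resp. $s+n/p>0$), which is $s>-n\min(1/p,1-1/p)$. Your last sentence gestures in this direction, but the essential device---leaving the $L^p$ scale in H\"older and returning via Bernstein to trade the loss for $n\min(1/p,1-1/p)$ of extra room---is not a consequence of Lemma \ref{fangji} and needs to be carried out explicitly; otherwise the proof only works for $s>0$.
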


Finally, we recall a composition result and the parabolic regularity estimate for the
heat equation to end this section.
  \begin{lemma}\label{fuhe}{\rm(\cite{bcd})}
   Let $G$ with $G(0)=0$ be a smooth function defined on an open interval $I$
of $\R$ containing~$0.$
Then  the following estimates
$$
\|G(a)\|_{\dot B^{s}_{p,1}}\lesssim\|a\|_{\dot B^s_{p,1}}\quad\hbox{and}\quad
\|G(a)\|_{\widetilde{L}^q_T(\dot B^{s}_{p,1})}\lesssim\|a\|_{\widetilde{L}^q_T(\dot B^s_{p,1})}
$$
hold true for  $s>0,$ $1\leq p,\, q\leq \infty$ and
 $a$  valued in a bounded interval $J\subset I.$
\end{lemma}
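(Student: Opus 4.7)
The plan is to exploit the Meyer--Bony linearization, writing $G(a)$ as a weighted sum of dyadic blocks of $a$, and then running a standard frequency split to trade smoothness for summability. Since $a\in\mathcal{S}'_h$, we have $\dot S_j a\to 0$ in $L^\infty$ as $j\to-\infty$, so the hypothesis $G(0)=0$ yields the telescoping identity
\begin{equation*}
G(a)=\sum_{j\in\Z}\bigl[G(\dot S_{j+1}a)-G(\dot S_j a)\bigr]=\sum_{j\in\Z}m_j\,\dot\Delta_j a,\qquad m_j:=\int_0^1 G'\!\bigl(\dot S_j a+\tau\dot\Delta_j a\bigr)\,d\tau.
\end{equation*}
Because $a$ is valued in the bounded interval $J\subset I$ and $G'$ is bounded on a closed neighborhood of $J$, the multipliers satisfy $\|m_j\|_{L^\infty}\le C$ uniformly in $j$, with $C$ depending only on $G$ and $J$.

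Next, for each $k\in\Z$ I apply $\dot\Delta_k$ to this sum and decompose each product $m_j\dot\Delta_j a$ via Bony's formula \eqref{bony}. Since $\dot\Delta_j a$ is spectrally supported in an annulus of size $2^j$, support considerations show that only indices $j\ge k-N_0$ (for a fixed integer $N_0$) contribute nontrivially to $\dot\Delta_k G(a)$. Combined with the uniform $L^\infty$ bound on $m_j$, this gives
\begin{equation*}
\|\dot\Delta_k G(a)\|_{L^p}\lesssim\sum_{j\ge k-N_0}\|\dot\Delta_j a\|_{L^p}=\sum_{j\ge k-N_0}2^{-js}\bigl(2^{js}\|\dot\Delta_j a\|_{L^p}\bigr).
\end{equation*}
Multiplying by $2^{ks}$, summing in $k\in\Z$, and swapping the order of summation produces the geometric factor $\sum_{k\le j+N_0}2^{(k-j)s}$, which converges \emph{precisely} because $s>0$; this yields $\|G(a)\|_{\dot B^s_{p,1}}\lesssim\|a\|_{\dot B^s_{p,1}}$.

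For the Chemin--Lerner version, the same dyadic bound holds pointwise in $t$; taking $L^q(0,T)$ norms and invoking Minkowski's inequality reduces the claim to controlling $\bigl\|(2^{js}\|\dot\Delta_j a\|_{L^q_T(L^p)})_j\bigr\|_{\ell^1}$, which is by definition $\|a\|_{\widetilde L^q_T(\dot B^s_{p,1})}$. The main technical obstacle is the step reducing $\dot\Delta_k(m_j\dot\Delta_j a)$ to indices $j\ge k-N_0$: the multiplier $m_j$ is \emph{not} itself spectrally localized, so one cannot conclude directly from the Fourier support of $\dot\Delta_j a$. I would circumvent this by splitting $m_j=\dot S_{j+N_0}m_j+(I-\dot S_{j+N_0})m_j$; the first piece is handled by support considerations, and the tail is absorbed using $\|\dot\Delta_\ell m_j\|_{L^\infty}$, which in turn follows from the smoothness of $G'$ together with elementary high-frequency bounds on $\dot S_j a+\tau\dot\Delta_j a$ (a uniformly $L^\infty$-bounded function). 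With this standard workaround, the argument goes through and delivers both estimates.
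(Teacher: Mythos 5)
Your argument is correct and is exactly the standard Meyer first-linearization proof that the cited reference \cite{bcd} (Theorem 2.61) gives: write $G(a)=\sum_{j}m_j\dot{\Delta}_j a$ with $m_j=\int_0^1 G'(\dot S_j a+\tau\dot{\Delta}_j a)\,d\tau$, split $m_j$ into $\dot S_{j+N_0}m_j$ (whose product with $\dot{\Delta}_j a$ only hits blocks with $k\le j+C$) and a high-frequency tail handled via $\|\dot{\Delta}_\ell m_j\|_{L^\infty}\lesssim 2^{-N(\ell-j)}$ for $\ell\ge j$, the decay rate $N$ being any integer exceeding $s$ (obtained from Bernstein and boundedness of derivatives of $G$ up to order $N$ on the range of $\dot S_j a+\tau\dot{\Delta}_j a$). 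The paper does not prove this lemma itself, merely citing \cite{bcd}, so your route coincides with the intended one; the only caveat is that the ``support considerations'' in your third paragraph are, as you yourself acknowledge, not valid as first stated and are justified only after the frequency splitting of $m_j$ in your final paragraph.
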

\begin{lemma} [\cite{bcd}]\label{heat}
Let $\sigma\in \R$,  $T>0$, $1\leq p,r\leq\infty$ and $1\leq q_{2}\leq q_{1}\leq\infty$.  Let $u$  satisfy the heat equation
$$\partial_tu-\Delta u=f,\quad
u_{|t=0}=u_0.$$
Then  there holds the following a priori estimate
\begin{align*}
\|u\|_{\widetilde L_{T}^{q_1}(\dot B^{\sigma+\frac 2{q_1}}_{p,r})}\lesssim
\|u_0\|_{\dot B^\sigma_{p,r}}+\|f\|_{\widetilde L^{q_2}_{T}(\dot B^{\sigma-2+\frac 2{q_2}}_{p,r})}.
\end{align*}
\end{lemma}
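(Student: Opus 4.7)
The plan is to localize in frequency via the Littlewood-Paley blocks, apply Duhamel's formula to reduce everything to the heat semigroup acting on a dyadic block, exploit the exponential decay of the heat semigroup restricted to a fixed dyadic frequency ring, and finally invoke Young's convolution inequality in time before summing in $\ell^r$.

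\textbf{Step 1: frequency-localized Duhamel formula.} I would first apply $\ddj$ to the equation to obtain
\begin{equation*}
\partial_t\ddj u-\Delta\ddj u=\ddj f,\qquad \ddj u(0)=\ddj u_0,
\end{equation*}
so that Duhamel's formula gives
\begin{equation*}
\ddj u(t)=e^{t\Delta}\ddj u_0+\int_0^t e^{(t-\tau)\Delta}\ddj f(\tau)\,d\tau.
\end{equation*}

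\textbf{Step 2: exponential decay on dyadic rings.} The key analytic input, which follows from Bernstein's lemma (Lemma \ref{bernstein}) combined with a direct estimate on the heat kernel truncated on the annulus $\{|\xi|\sim 2^j\}$, is the bound
\begin{equation*}
\|e^{t\Delta}\ddj g\|_{L^p}\lesssim e^{-ct\,2^{2j}}\|\ddj g\|_{L^p}\qquad\text{for some }c>0,
\end{equation*}
uniformly in $t\ge 0$, $j\in\Z$ and $p\in[1,\infty]$. Inserting this into the Duhamel formula yields
\begin{equation*}
\|\ddj u(t)\|_{L^p}\lesssim e^{-ct\,2^{2j}}\|\ddj u_0\|_{L^p}+\int_0^t e^{-c(t-\tau)2^{2j}}\|\ddj f(\tau)\|_{L^p}\,d\tau.
\end{equation*}

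\textbf{Step 3: time integration via Young.} Taking the $L^{q_1}(0,T)$ norm in $t$, the first term contributes a factor $\|e^{-ct\,2^{2j}}\|_{L^{q_1}_T}\lesssim 2^{-2j/q_1}$. For the convolution in the second term I apply Young's inequality with $1+\tfrac{1}{q_1}=\tfrac{1}{q_2}+\tfrac{1}{r}$ (which is legitimate since $q_2\le q_1$), giving a factor $\|e^{-ct\,2^{2j}}\|_{L^r_T}\lesssim 2^{-2j(1+1/q_1-1/q_2)}$. Hence
\begin{equation*}
\|\ddj u\|_{L^{q_1}_T(L^p)}\lesssim 2^{-2j/q_1}\|\ddj u_0\|_{L^p}+2^{-2j(1+1/q_1-1/q_2)}\|\ddj f\|_{L^{q_2}_T(L^p)}.
\end{equation*}

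\textbf{Step 4: reassembling the Besov norm.} Multiplying both sides by $2^{j(\sigma+2/q_1)}$ the exponents become exactly $2^{j\sigma}$ for the initial data piece and $2^{j(\sigma-2+2/q_2)}$ for the forcing piece; taking the $\ell^r(\Z)$ norm and using the triangle inequality yields the claimed inequality. The only step that requires genuine care is the decay bound in Step 2; the rest is essentially bookkeeping with Young's inequality. I expect no difficulty beyond checking that the thresholds $1\le q_2\le q_1\le\infty$ are exactly what is needed for Young, and that the index $r$ introduced is always in $[1,\infty]$.
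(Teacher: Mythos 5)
Your proposal is correct and is essentially the standard proof of this estimate as given in the cited reference \cite{bcd} (the paper itself only cites the result and provides no proof). The chain of ideas — localize with $\ddj$, write Duhamel, use the smoothing bound $\|e^{t\Delta}\ddj g\|_{L^p}\lesssim e^{-ct2^{2j}}\|\ddj g\|_{L^p}$, take the $L^{q_1}_T$ norm and apply Young's convolution inequality in time, then multiply by $2^{j(\sigma+2/q_1)}$ and sum in $\ell^r$ — is exactly the textbook argument, and the exponent bookkeeping checks out: the Young exponent $\tilde r$ defined by $1+\tfrac1{q_1}=\tfrac1{q_2}+\tfrac1{\tilde r}$ lies in $[1,\infty]$ precisely because $1\le q_2\le q_1\le\infty$, and $2^{j(\sigma+2/q_1)}\cdot 2^{-2j(1+1/q_1-1/q_2)}=2^{j(\sigma-2+2/q_2)}$ as required.

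One small presentational point: you reuse the letter $r$ both for the $\ell^r$ summability index of the Besov space and for the conjugate exponent in Young's inequality in time. These are unrelated indices and the double use is confusing (the closing remark ``the index $r$ introduced is always in $[1,\infty]$'' refers to the Young exponent, not the Besov one). Rename the Young exponent to $\tilde r$ or $q_3$ to avoid the clash; otherwise the argument is complete.
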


\section{ The proof of Theorem \ref{dingli1}}
In order to prove the existence part of Theorem \ref{dingli1}, we use a scheme similar to the case of barotropic Navier-Stokes equations, see  \cite{chenqionglei},  \cite{danchin2014},  \cite{haspot} for example. More precisely, let
\begin{align*}
\hbox{$\u_\mathfrak{F}=e^{t\mathcal{A}} \u_0$ \quad with\quad $\mathcal{A} \stackrel{\mathrm{def}}{=}\mu\Delta+(\lambda+\mu)\nabla\div$}, \quad \eta_\mathfrak{F}=e^{\lon t\Delta} \eta_0
\end{align*}
and ${\mathbb{T}}_\mathfrak{F}$ be the solution to the linear system
\begin{align*}
&\partial_t{\mathbb{T}}_\mathfrak{F}+ \frac{A_0}{2\lambda_1}{\mathbb{T}}_\mathfrak{F}-\lon\Delta{\mathbb{T}}_\mathfrak{F}=0,\quad{\mathbb{T}}_\mathfrak{F}(0)={\mathbb{T}}_0 .
\end{align*}
Denote
$
\bar{\u}=\u-\u_\mathfrak{F},\quad \bar{\eta}=\eta-\eta_\mathfrak{F},\quad\bar{{\mathbb{T}}}={\mathbb{T}}-{\mathbb{T}}_\mathfrak{F},
$
then $(a,\bar{\u}, \bar{\eta}, \bar{{\mathbb{T}}} )$ satisfies the following equations:
\begin{eqnarray}\label{linear}
\left\{\begin{aligned}
&\partial_ta+\div \u=-\div(a\u),\\
&\partial_t\bar{\eta}-\varepsilon\Delta\bar{\eta}=-\div(\eta \u),\\
&\partial_t\bar{{\mathbb{T}}}+ \frac{A_0}{2\lambda_1}\bar{{\mathbb{T}}}+(\u\cdot\nabla){\mathbb{T}}-\varepsilon\Delta\bar{{\mathbb{T}}}= \frac{\kappa\,A_0}{2\lambda_1}\eta  \,\mathbb{Id}+F({\mathbb{T}}, \u),\\
&\partial_t\bar{\u}+\u\cdot\nabla \u-\mu\Delta \bar{\u}-(\lambda+\mu)\nabla\div \bar{\u}+\nabla
a=\div{\mathbb{T}}-\kappa L\nabla\eta +G(a,\u,\eta,{\mathbb{T}}).
\end{aligned}\right.
\end{eqnarray}
We use a standard scheme for proving the existence of the solutions.

\begin{itemize}
  \item We smooth out the data and get a sequence of smooth solutions $(a^n,\bar{\u}^n, \bar{\eta}^n, \bar{{\mathbb{T}}}^n )$ of an
approximated system of \eqref{linear}, on a bounded interval $[0; T^n] $ which may depend on
$n$.
  \item We  exhibit a positive lower bound $T$ for $T^n$, and prove uniform estimates on ($a^n$,$\bar{\u}^n$, $\bar{\eta}^n$, $\bar{{\mathbb{T}}}^n $).
  \item We use compactness to prove that the sequence $(a^n,\bar{\u}^n, \bar{\eta}^n, \bar{{\mathbb{T}}}^n )$ converges, up to extraction,
to a solution of \eqref{linear}.
\end{itemize}
Since the proof is
lengthy but standard,  we
refer for instance to \cite{bcd}, \cite{chenqionglei},  \cite{helingbing},
\cite{haspot}, here, we omit the details for brevity. Moreover,  the uniqueness can be obtained in the same way as \cite{danchin2014}.$\hspace{15.8cm}\square$

\section{The proof of Theorem \ref{dingli2}}

This section presents the proof of Theorem \ref{dingli2} stating the global wellposedness for \eqref{m}. The framework is the bootstrap argument, which consists of two main steps.
The first step is to establish the {\it a priori} bounds while the second is to apply
and complete the bootstrap argument by using the {\it a priori} bounds. Main efforts are
devoted to obtaining suitable {\it a priori} bounds.  To do so,  we separate the low frequency
 from the high frequency to distinguish their different behaviors. We make use of some
sharp commutator estimates to shift derivatives.

\subsection{ The estimates in the low frequency}
\noindent The goal of  this subsection is  to establish the {\it a priori} bounds in the low frequency part,
which will be divided  into the following three lemmas.

In the first lemma, we are concerned with the estimates about the polymer number density.
\begin{lemma}\label{jia1}
Under the conditions in Theorem \ref{dingli2}, there holds
\begin{align}\label{er4}
&\|\eta^\ell\|_{\widetilde{L}_t^{\infty}(\dot{B}_{2,1}^{\frac n2-2})}
+ \lon\|\eta^\ell\|_{L^1_t(\dot B^{\frac  n2}_{2,1})}\nonumber\\
&\quad\lesssim\|\eta_0^\ell\|_{\dot{B}_{2,1}^{\frac {n}{2}-2}}
+\int^t_0(\|\eta^\ell\|_{\dot B^{\frac  n2}_{2,1}}+\|\eta^h\|_{\dot B^{\frac  np}_{p,1}})(\|\u^\ell\|_{\dot B^{\frac  n2-1}_{2,1}}+\|\u^h\|_{\dot B^{\frac  np-1}_{p,1}})\,ds.
\end{align}
\end{lemma}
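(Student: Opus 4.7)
The equation for the polymer number density in \eqref{m} is the forced heat equation
\[
\partial_t \eta - \lon \Delta \eta = -\div(\eta\,\u),
\]
so the plan is to run a localized energy estimate at low frequencies, then control the forcing via the mixed-regularity product law of Lemma \ref{good}. Concretely, I would apply $\dot\Delta_j$ to the equation, take the $L^2$ inner product with $\dot\Delta_j\eta$, and use the spectral localization estimate from Lemma \ref{bernstein} (giving $\int|\nabla\dot\Delta_j\eta|^2\,dx\ge c\,2^{2j}\|\dot\Delta_j\eta\|_{L^2}^2$) to obtain the pointwise-in-$t$ ODE inequality
\[
\frac{d}{dt}\|\dot\Delta_j\eta\|_{L^2}+c\lon 2^{2j}\|\dot\Delta_j\eta\|_{L^2}\lesssim \|\dot\Delta_j\div(\eta\,\u)\|_{L^2}.
\]
Integrating in $t$, multiplying by $2^{j(n/2-2)}$, and summing over $j\le j_0$ produces exactly the left-hand side of \eqref{er4}, plus the initial data term $\|\eta_0^\ell\|_{\dot B^{n/2-2}_{2,1}}$ and a nonlinear contribution $\|(\div(\eta\u))^\ell\|_{L^1_t(\dot B^{n/2-2}_{2,1})}$.

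For the nonlinear piece, Bernstein's lemma (applied on the support $\{|\xi|\lesssim 2^{j_0}\}$) absorbs the divergence into one spatial derivative, so
\[
\|(\div(\eta\,\u))^\ell\|_{\dot B^{n/2-2}_{2,1}}\lesssim \|(\eta\,\u)^\ell\|_{\dot B^{n/2-1}_{2,1}}.
\]
The key step is now to invoke Lemma \ref{good} with $u\leftarrow \eta$ and $v\leftarrow \u$, which (given the range of $p$ imposed in Theorem \ref{dingli2}) yields
\[
\|(\eta\,\u)^\ell\|_{\dot B^{n/2-1}_{2,1}}\lesssim \bigl(\|\u^\ell\|_{\dot B^{n/2-1}_{2,1}}+\|\u^h\|_{\dot B^{n/p-1}_{p,1}}\bigr)\|\eta\|_{\dot B^{n/p}_{p,1}}.
\]
To match the RHS of \eqref{er4} I still need to convert the factor $\|\eta\|_{\dot B^{n/p}_{p,1}}$ into a low/high decomposition adapted to the $L^2$/$L^p$ splitting used in the energy space. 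For $p\ge 2$, Bernstein's lemma gives the low-frequency embedding $\|\eta^\ell\|_{\dot B^{n/p}_{p,1}}\lesssim \|\eta^\ell\|_{\dot B^{n/2}_{2,1}}$ (since $n/p=n/2-n(1/2-1/p)$), so that
\[
\|\eta\|_{\dot B^{n/p}_{p,1}}\lesssim \|\eta^\ell\|_{\dot B^{n/2}_{2,1}}+\|\eta^h\|_{\dot B^{n/p}_{p,1}}.
\]

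Putting these three bounds together and integrating over $[0,t]$ yields \eqref{er4}. The main subtlety, and the only nontrivial obstacle, is handling the product $\eta\,\u$ in which the velocity is measured in two different regularity settings (high and low frequencies in different $L^p$ scales); this is precisely what Lemma \ref{good} is designed to handle, and its hypotheses on $p$ match those of Theorem \ref{dingli2} exactly, so the argument closes without further work. Everything else is a standard combination of dyadic energy estimates, Bernstein inequalities, and low-frequency embeddings.
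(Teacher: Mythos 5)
Your proposal is correct and follows exactly the same route as the paper: a frequency-localized $L^2$ energy estimate on the heat equation for $\eta$, Bernstein to absorb the divergence and pass from $\dot B^{n/2-2}_{2,1}$ to $\dot B^{n/2-1}_{2,1}$, Lemma \ref{good} applied with $u=\eta$ and $v=\u$, and finally the low-frequency embedding $\|\eta^\ell\|_{\dot B^{n/p}_{p,1}}\lesssim\|\eta^\ell\|_{\dot B^{n/2}_{2,1}}$ for $p\ge 2$. No substantive differences from the paper's own argument.
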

\begin{proof}
Applying $\ddj$ to the second equation of \eqref{m}, then taking the $L^2$ inner product with $\ddj\eta$, we get
\begin{align}\label{er1}
&\frac{1}{2}\frac{d}{dt}\|\dot{\Delta}_j\eta\|_{L^2}^2+c\lon2^{2j}\| \dot{\Delta}_j\eta\|_{L^2}^2
\lesssim \| \dot{\Delta}_j\div(\eta \u)\|_{L^2}\| \dot{\Delta}_j\eta\|_{L^2}
\end{align}
in which we have used the following Bernstein's inequality:
there exists a positive constant $c$ so that
$$
-\int_{\R^n}\Delta\dot{\Delta}_j\eta\cdot\dot{\Delta}_j\eta \,dx\ge c2^{2j}\|\dot{\Delta}_j\eta\|_{L^2}^2.
$$
Multiplying by $1/\| \dot{\Delta}_j\eta\|_{L^2}2^{j(\frac{n}{2}-2)}$  formally on both hand side of
 \eqref{er1},  integrating  the resultant inequality from $0$ to $t$, we can get by summing up about $j\le j_0$ that
\begin{align}\label{er2}
\|\eta^\ell\|_{\widetilde{L}_t^{\infty}(\dot{B}_{2,1}^{\frac n2-2})}
+ \lon\|\eta^\ell\|_{L^1_t(\dot B^{\frac  n2}_{2,1})}
\lesssim&\|\eta_0^\ell\|_{\dot{B}_{2,1}^{\frac {n}{2}-2}}+\int^t_0\|(\div(\eta \u))^\ell\|_{\dot{B}_{2,1}^{\frac n2-2}}\,ds\nonumber\\
\lesssim&\|\eta_0^\ell\|_{\dot{B}_{2,1}^{\frac {n}{2}-2}}+\int^t_0\|(\eta \u)^\ell\|_{\dot{B}_{2,1}^{\frac n2-1}}\,ds.
\end{align}
It follows from Lemma \ref{good} and the embedding relation that
\begin{align}\label{er3}
\|(\eta \u)^\ell\|_{\dot{B}_{2,1}^{\frac n2-1}}
\lesssim&\|\eta\|_{\dot B^{\frac  np}_{p,1}}(\|\u^\ell\|_{\dot B^{\frac  n2-1}_{2,1}}+\|\u^h\|_{\dot B^{\frac  np-1}_{p,1}})\nonumber\\
\lesssim&(\|\eta^\ell\|_{\dot B^{\frac  n2}_{2,1}}+\|\eta^h\|_{\dot B^{\frac  np}_{p,1}})(\|\u^\ell\|_{\dot B^{\frac  n2-1}_{2,1}}+\|\u^h\|_{\dot B^{\frac  np-1}_{p,1}}).
\end{align}
This together with \eqref{er2} gives \eqref{er4}.
\end{proof}

\begin{lemma}\label{jia2}
Under the conditions in Theorem \ref{dingli2}, there holds
\begin{align}\label{er7}
&\|{\mathbb{T}}^\ell\|_{ \widetilde{L}_t^\infty(\dot B^{\frac  n2}_{2,1})}+\frac{A_0}{2\lambda_1} \|{\mathbb{T}}^\ell\|_{L^1_t(\dot B^{\frac  n2}_{2,1})}+ \lon\|{\mathbb{T}}^\ell\|_{L^1_t(\dot B^{\frac  n2+2}_{2,1})}\nonumber\\
&\quad\lesssim \|{\mathbb{T}}_0^\ell\|_{\dot B^{\frac  n2}_{2,1}}+\frac{\kappa\,A_0}{2\lambda_1}\int_0^t\|\eta^\ell\|_{\dot B^{\frac  n2}_{2,1}}\,ds
+\int_0^t(\|{\mathbb{T}}^\ell\|_{\dot B^{\frac  n2}_{2,1}}^2+\|{\mathbb{T}}^h\|_{\dot B^{\frac  np}_{p,1}}^2)\,ds\nonumber\\
&\quad\quad+\int_0^t(\|\u^\ell\|_{\dot B^{\frac  n2-1}_{2,1}}+\|\u^h\|_{\dot B^{\frac  np-1}_{p,1}})(\|\u^\ell\|_{\dot B^{\frac  n2+1}_{2,1}}+\|\u^h\|_{\dot B^{\frac  np+1}_{p,1}})\,ds.
\end{align}
\end{lemma}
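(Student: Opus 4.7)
The plan is to mimic the $L^2$ low-frequency energy scheme of Lemma \ref{jia1} on the third equation of \eqref{m}, tracking in parallel the damping coefficient $\frac{A_0}{2\lambda_1}$ and the dissipation $-\lon\Delta$. First I would localize by applying $\ddj$, take the $L^2$ inner product with $\ddj{\mathbb{T}}$, and write the transport term as $\ddj(\u\cdot\nabla{\mathbb{T}})=\u\cdot\nabla\ddj{\mathbb{T}}+[\ddj,\u\cdot\nabla]{\mathbb{T}}$. Using the standard cancellation
\begin{align*}
\int_{\R^n}(\u\cdot\nabla)\ddj{\mathbb{T}}\cdot\ddj{\mathbb{T}}\,dx=-\frac12\int_{\R^n}\div\u\,|\ddj{\mathbb{T}}|^2\,dx
\end{align*}
together with the Bernstein lower bound $-\int\Delta\ddj{\mathbb{T}}\cdot\ddj{\mathbb{T}}\,dx\ge c\,2^{2j}\|\ddj{\mathbb{T}}\|_{L^2}^2$, I would obtain the block-wise inequality
\begin{align*}
\frac12\frac{d}{dt}\|\ddj{\mathbb{T}}\|_{L^2}^2+\Bigl(\frac{A_0}{2\lambda_1}+c\lon 2^{2j}\Bigr)\|\ddj{\mathbb{T}}\|_{L^2}^2\lesssim \|\ddj R\|_{L^2}\|\ddj{\mathbb{T}}\|_{L^2},
\end{align*}
where $R$ collects $\frac{\kappa A_0}{2\lambda_1}\eta\,\mathbb{Id}$, $F({\mathbb{T}},\u)$, the commutator $[\ddj,\u\cdot\nabla]{\mathbb{T}}$, and the $\frac12\div\u\,\ddj{\mathbb{T}}$ correction.

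Next I would formally divide by $\|\ddj{\mathbb{T}}\|_{L^2}$, integrate in $t$, multiply by $2^{j\frac{n}{2}}$ and sum over $j\le j_0$. The three left-hand side norms of \eqref{er7} then appear respectively from the time derivative (the $\widetilde L^\infty_t$ piece), the damping (producing $\frac{A_0}{2\lambda_1}\|{\mathbb{T}}^\ell\|_{L^1_t(\dot B^{n/2}_{2,1})}$), and the Bernstein gain (producing $\lon\|{\mathbb{T}}^\ell\|_{L^1_t(\dot B^{n/2+2}_{2,1})}$). Both damping and dissipation survive simultaneously because on $j\le j_0$ the factor $2^{2j}$ is bounded from above and below.

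The main work lies in estimating $R$ in $L^1_t(\dot B^{n/2}_{2,1})^\ell$. The linear source $\frac{\kappa A_0}{2\lambda_1}\eta\,\mathbb{Id}$ is immediate and accounts for the first integral on the right of \eqref{er7}. For the bilinear term $F({\mathbb{T}},\u)=(\nabla\u\,{\mathbb{T}}+{\mathbb{T}}\nabla^\top\u)-{\mathbb{T}}\div\u$, and for the convection commutator (controlled by Lemma \ref{jiaohuanzi} at the index $s=n/2$, together with the $\div\u$ correction), the natural bound is a cross-product of the shape $\|\nabla\u\|_{\dot B^{n/p}_{p,1}}\,(\|{\mathbb{T}}^\ell\|_{\dot B^{n/2}_{2,1}}+\|{\mathbb{T}}^h\|_{\dot B^{n/p}_{p,1}})$, obtained via Lemma \ref{good} so that no regularity is lost on either factor. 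A standard Young inequality $ab\le\frac12(a^2+b^2)$ then reorganizes the mixed products into the two square integrals $\|{\mathbb{T}}^\ell\|^2+\|{\mathbb{T}}^h\|^2$ and the $\u$--$\u$ bilinear integral displayed in \eqref{er7}; the asymmetric factor $\|\u\|_{\dot B^{n/p-1}}\|\u\|_{\dot B^{n/p+1}}$ reflects the $L^\infty_t$--$L^1_t$ partition of the velocity norms in the bootstrap functional $X(t)$.

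The main obstacle, I expect, will be the low-frequency product estimate for $F({\mathbb{T}},\u)$ at regularity $\dot B^{n/2}_{2,1}$: a direct use of Lemma \ref{daishu} would either demand too much regularity on $\u$ or fail to split cleanly between the low and high components of ${\mathbb{T}}$, spoiling the Young repartition. The refined low-high splitting of Lemma \ref{good}, valid precisely in the stated range $2\le p\le\min(4,2n/(n-2))$ (with $p\neq 4$ if $n=2$), is tailored to bypass this difficulty, and its use is what allows the final inequality to close against $X(t)$.
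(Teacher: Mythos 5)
Your block‑wise $L^2$ energy scheme, the simultaneous extraction of the damping and Bernstein dissipation on $j\le j_0$, the use of Lemma~\ref{good} for $F(\mathbb{T},\u)$, and the final Young repartition all coincide with the paper's structure. Where you diverge is the treatment of $\u\cdot\nabla\mathbb{T}$: you propose the transport cancellation plus the commutator $[\ddj,\u\cdot\nabla]\mathbb{T}$, whereas the paper makes no use of that cancellation at low frequency. It simply keeps $\u\cdot\nabla\mathbb{T}$ as a source, exploits the low‑frequency embedding $\|z^\ell\|_{\dot B^{n/2}_{2,1}}\lesssim\|z^\ell\|_{\dot B^{n/2-1}_{2,1}}$ to drop one derivative, and then applies Lemma~\ref{good} at regularity $n/2-1$ to both $\u\cdot\nabla\mathbb{T}$ and $F(\mathbb{T},\u)$, before raising the regularity back by Bernstein on the individual (low or high) pieces. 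This sidesteps the commutator entirely.

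That matters, because your invocation of Lemma~\ref{jiaohuanzi} at $s=n/2$ does not go through in the admissible range. That lemma requires $s\le n/p$ and produces an $L^p$ block bound with all norms built on the same Lebesgue exponent $p$; here you need an $L^2$ block bound for $j\le j_0$, with $\nabla\u$ measured in $\dot B^{n/p}_{p,1}$ and $\mathbb{T}$ split between $\dot B^{n/2}_{2,1}$ (low) and $\dot B^{n/p}_{p,1}$ (high). For any $p>2$ in the allowed interval one has $n/2>n/p$, so $s=n/2$ violates the hypothesis, and in any case a genuinely mixed‑exponent commutator bound (an analogue of Lemma~\ref{good} for commutators) would be needed. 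Such a bound is plausible, but it is not what Lemma~\ref{jiaohuanzi} gives, and you would have to supply it; the paper's route avoids the problem. Note that your commutator strategy \emph{is} exactly what the paper does in the high‑frequency estimate (Lemma~\ref{jia4}, see \eqref{er23}–\eqref{er25}), where everything is measured in $L^p$ and Lemma~\ref{jiaohuanzi} applies at $s=n/p$; the low‑frequency case is precisely where the indices stop matching, which is why the paper switches method there.
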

\begin{proof}
Along the same derivation of \eqref{er2}, we can deduce from the third equation of \eqref{m} that
\begin{align}\label{er5}
&\|{\mathbb{T}}^\ell\|_{ \widetilde{L}_t^\infty(\dot B^{\frac  n2}_{2,1})}+ \frac{A_0}{2\lambda_1}\|{\mathbb{T}}^\ell\|_{L^1_t(\dot B^{\frac  n2}_{2,1})}+ \lon\|{\mathbb{T}}^\ell\|_{L^1_t(\dot B^{\frac  n2+2}_{2,1})}\nonumber\\
&\quad\lesssim \|{\mathbb{T}}_0^\ell\|_{\dot B^{\frac  n2}_{2,1}}+\frac{\kappa\,A_0}{2\lambda_1}\int_0^t\|\eta^\ell\|_{\dot B^{\frac  n2}_{2,1}}\,ds+\int_0^t\|(F({\mathbb{T}}, \u))^\ell\|_{\dot B^{\frac  n2}_{2,1}}\,ds
+\int_0^t\|(\u\cdot\nabla{\mathbb{T}})^\ell\|_{\dot B^{\frac  n2}_{2,1}}\,ds.
\end{align}
Applying Lemma \ref{good} once again, we have
\begin{align*}%\label{er6}
&\|(F({\mathbb{T}}, \u))^\ell\|_{\dot B^{\frac  n2}_{2,1}}+\|(\u\cdot\nabla{\mathbb{T}})^\ell\|_{\dot B^{\frac  n2-1}_{2,1}}\nonumber\\
&\quad\lesssim\|(F({\mathbb{T}}, \u))^\ell\|_{\dot B^{\frac  n2-1}_{2,1}}+\|(\u\cdot\nabla{\mathbb{T}})^\ell\|_{\dot B^{\frac  n2-1}_{2,1}}\nonumber\\
&\quad\lesssim \|{\mathbb{T}}\|_{\dot B^{\frac  np}_{p,1}}(\|\nabla \u^\ell\|_{\dot B^{\frac  n2-1}_{2,1}}+\|\nabla \u^h\|_{\dot B^{\frac  np-1}_{p,1}})
+ \|\u\|_{\dot B^{\frac  np}_{p,1}}(\|\nabla {\mathbb{T}}^\ell\|_{\dot B^{\frac  n2-1}_{2,1}}+\|\nabla{\mathbb{T}}^h\|_{\dot B^{\frac  np-1}_{p,1}})
\nonumber\\&\quad\lesssim \|{\mathbb{T}}\|_{\dot B^{\frac  np}_{p,1}}(\u^\ell\|_{\dot B^{\frac  n2}_{2,1}}+\| \u^h\|_{\dot B^{\frac  np}_{p,1}})
+ \|\u\|_{\dot B^{\frac  np}_{p,1}}(\| {\mathbb{T}}^\ell\|_{\dot B^{\frac  n2}_{2,1}}+\|{\mathbb{T}}^h\|_{\dot B^{\frac  np}_{p,1}})\nonumber\\
&\quad\lesssim\|{\mathbb{T}}^\ell\|_{\dot B^{\frac  n2}_{2,1}}^2+\|{\mathbb{T}}^h\|_{\dot B^{\frac  np}_{p,1}}^2+(\|\u^\ell\|_{\dot B^{\frac  n2-1}_{2,1}}+\|\u^h\|_{\dot B^{\frac  np-1}_{p,1}})(\|\u^\ell\|_{\dot B^{\frac  n2+1}_{2,1}}+\|\u^h\|_{\dot B^{\frac  np+1}_{p,1}}).
\end{align*}

Inserting the above estimate into \eqref{er5} implies \eqref{er7}.
\end{proof}

\begin{lemma}\label{jia3}
Under the conditions in Theorem \ref{dingli2}, there holds
\begin{align}\label{er14}
&\| (a^\ell, \u^\ell)\|_{ \widetilde{L}_t^\infty(\dot B^{\frac  n2-1}_{2,1})}+ \mu\| (a^\ell, \u^\ell)\|_{L^1_t(\dot B^{\frac  n2+1}_{2,1})}\nonumber\\
&\quad\lesssim\|(a^\ell_0, \u_0^\ell)\|_{\dot B^{\frac  n2-1}_{2,1}}
+ \|{\mathbb{T}}\|^\ell_{L^1_t(\dot B^{\frac  n2}_{2,1})}+\kappa L \|\eta\|^\ell_{L^1_t(\dot B^{\frac  n2}_{2,1})}+\int_0^t \mathcal{N}_1(s)\,ds,
\end{align}
with
\begin{align}\label{er15}
\mathcal{N}_1(t)\stackrel{\mathrm{def}}{=}&\big(\| (a^\ell,\u^\ell)\|_{\dot B^{\frac  n2-1}_{2,1}}+\| \u^h\|_{\dot B^{\frac  np-1}_{p,1}}\big)\big(\| (a^\ell,\u^\ell)\|_{\dot B^{\frac  n2+1}_{2,1}}+\| \u^h\|_{\dot B^{\frac  np+1}_{p,1}}+\|\eta^\ell\|_{\dot B^{\frac  n2}_{2,1}}+\|\eta^h\|_{\dot B^{\frac  np}_{p,1}}\big)\nonumber\\
&+\Big(\|a^\ell\|_{\dot B^{\frac  n2-1}_{2,1}}+\|{\mathbb{T}}^\ell\|_{\dot B^{\frac  n2}_{2,1}}+\|(a^h,{\mathbb{T}}^h)\|_{\dot B^{\frac  np}_{p,1}}\Big)\Big(\|(\eta^\ell,{\mathbb{T}}^\ell)\|_{\dot B^{\frac  n2}_{2,1}}+\| (a^h,{\mathbb{T}}^h)\|_{\dot B^{\frac  np}_{p,1}}\nonumber\\
&\quad\quad\quad\quad\quad\quad\quad\quad+\|\eta^h\|_{\dot B^{\frac  np+1}_{p,1}}+\|\eta^\ell\|_{\dot B^{\frac  n2-2}_{2,1}}\|\eta^\ell\|_{\dot B^{\frac  n2}_{2,1}}+\|\eta^h\|_{\dot B^{\frac  np-1}_{p,1}}\|\eta^h\|_{\dot B^{\frac  np+1}_{p,1}}\Big).
\end{align}
\end{lemma}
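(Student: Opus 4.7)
The plan is to interpret the first and fourth equations of \eqref{m} as the barotropic compressible Navier--Stokes system with nonlinear source terms, and apply the by-now classical low-frequency estimate of Danchin and Haspot (see \cite{danchin2014,haspot}). Rewriting the system as
\begin{equation*}
\partial_t a + \div \u = -\div(a\u), \qquad \partial_t \u + \nabla a - \mu\Delta \u - (\lambda+\mu)\nabla\div \u = \mathcal H,
\end{equation*}
with $\mathcal H := -\u\cdot\nabla\u + \div{\mathbb{T}} - \kappa L \nabla \eta + G(a,\u,\eta,{\mathbb{T}})$, the Lyapunov-type estimate for the linearized compressible Navier--Stokes system at the $\dot B^{n/2-1}_{2,1}$ level, applied to the low-frequency projection (where the parabolic character dominates), yields
\begin{equation*}
\|(a^\ell,\u^\ell)\|_{\widetilde L^\infty_t(\dot B^{n/2-1}_{2,1})} + \mu \|(a^\ell,\u^\ell)\|_{L^1_t(\dot B^{n/2+1}_{2,1})} \lesssim \|(a_0^\ell,\u_0^\ell)\|_{\dot B^{n/2-1}_{2,1}} + \|\div(a\u)\|^\ell_{L^1_t(\dot B^{n/2-1}_{2,1})} + \|\mathcal H\|^\ell_{L^1_t(\dot B^{n/2-1}_{2,1})}.
\end{equation*}
The remaining work is to bound every source term in $L^1_t(\dot B^{n/2-1}_{2,1})$ on low frequencies.

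The two linear contributions of $\mathcal H$ are immediate: since $\nabla$ costs one derivative,
\begin{equation*}
\|\div {\mathbb{T}}\|^\ell_{\dot B^{n/2-1}_{2,1}} \lesssim \|{\mathbb{T}}^\ell\|_{\dot B^{n/2}_{2,1}}, \qquad \|\kappa L \nabla \eta\|^\ell_{\dot B^{n/2-1}_{2,1}} \lesssim \kappa L \|\eta^\ell\|_{\dot B^{n/2}_{2,1}},
\end{equation*}
which produces the $\|{\mathbb{T}}\|^\ell_{L^1_t(\dot B^{n/2}_{2,1})}$ and $\kappa L \|\eta\|^\ell_{L^1_t(\dot B^{n/2}_{2,1})}$ terms in \eqref{er14}. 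For the convective nonlinearities $\div(a\u)$ and $\u\cdot\nabla\u$, I would invoke Lemma \ref{good} (and Bernstein's inequality, which trivially gains one derivative on low-frequency blocks) to obtain
\begin{equation*}
\|\div(a\u)\|^\ell_{\dot B^{n/2-1}_{2,1}} + \|\u\cdot\nabla\u\|^\ell_{\dot B^{n/2-1}_{2,1}} \lesssim \big(\|(a^\ell,\u^\ell)\|_{\dot B^{n/2-1}_{2,1}} + \|\u^h\|_{\dot B^{n/p-1}_{p,1}}\big)\big(\|(a^\ell,\u^\ell)\|_{\dot B^{n/2+1}_{2,1}} + \|\u^h\|_{\dot B^{n/p+1}_{p,1}}\big),
\end{equation*}
which recovers exactly the first bracket of $\mathcal N_1$.

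To deal with $G$, I would first control the composition functions $k(a)$ and $I(a)$ by means of Lemma \ref{fuhe}, namely $\|k(a)\|, \|I(a)\| \lesssim \|a\|$ in each relevant Besov space. Then the products $k(a)\nabla a$, $I(a)\Delta\u$, $I(a)\nabla\div\u$, $I(a)\div{\mathbb{T}}$ and $I(a)\nabla\eta$ are handled by Lemma \ref{good} (or its $L^p$-$L^2$ variants from Appendix A), placing the $k(a)/I(a)$ factor at the $\dot B^{n/p}_{p,1}$ level and the other factor one or two derivatives higher. These contributions produce the linear pieces $\|(\eta^\ell,{\mathbb{T}}^\ell)\|_{\dot B^{n/2}_{2,1}}$, $\|(a^h,{\mathbb{T}}^h)\|_{\dot B^{n/p}_{p,1}}$, $\|\eta^h\|_{\dot B^{n/p+1}_{p,1}}$ appearing inside the second bracket of $\mathcal N_1$, multiplied by the $\|a\|$-type factor in front.

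The most delicate piece is the quadratic polymer term $\zeta(1-I(a))\eta\nabla\eta$, since the low-frequency input $\eta^\ell$ is only assumed to lie in $\dot B^{n/2-2}_{2,1}$, two derivatives below the critical Sobolev line, so the standard product laws fail. I would split $\eta = \eta^\ell + \eta^h$ and invoke a refined product estimate (to be established in Appendix A) giving
\begin{equation*}
\|\eta\nabla\eta\|^\ell_{\dot B^{n/2-1}_{2,1}} \lesssim \|\eta^\ell\|_{\dot B^{n/2-2}_{2,1}}\|\eta^\ell\|_{\dot B^{n/2}_{2,1}} + \|\eta^h\|_{\dot B^{n/p-1}_{p,1}}\|\eta^h\|_{\dot B^{n/p+1}_{p,1}},
\end{equation*}
together with $\|1-I(a)\|_{L^\infty} \lesssim 1 + \|a\|_{\dot B^{n/p}_{p,1}}$, which accounts for the last two trilinear contributions inside the second bracket of $\mathcal N_1$. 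Establishing this end-point low-frequency product law at regularity $\dot B^{n/2-1}_{2,1}$ with one factor in $\dot B^{n/2-2}_{2,1}$ is the principal technical obstacle; once it is in hand, combining all the bounds above and integrating in time yields \eqref{er14}--\eqref{er15}.
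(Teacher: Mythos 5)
Your proposal is correct and follows essentially the same strategy as the paper. The one presentational difference is that the paper first splits $\u = \p\u + \q\u$, treating the divergence-free part $\p\u$ with a plain heat estimate \eqref{tian1} and the pair $(a,\q\u)$ with the barotropic linearized estimate (Prop.~10.23 of \cite{bcd}), whereas you invoke the low-frequency Lyapunov estimate for the full linearized system at once; both routes give the same bound, since the $\p$/$\q$ split is only a convenience for citing the reference. A small attribution slip: the refined quadratic estimate $\|(\eta\nabla\eta)^\ell\|_{\dot B^{n/2-1}_{2,1}}+\|(\eta\nabla\eta)^h\|_{\dot B^{n/p-1}_{p,1}}\lesssim \|\eta^\ell\|_{\dot B^{n/2-2}_{2,1}}\|\eta^\ell\|_{\dot B^{n/2}_{2,1}}+\|\eta^h\|_{\dot B^{n/p-1}_{p,1}}\|\eta^h\|_{\dot B^{n/p+1}_{p,1}}$ is obtained in the paper via Lemma \ref{good} together with the interpolation $\|\eta^\ell\|^2_{\dot B^{n/2-1}_{2,1}}\lesssim\|\eta^\ell\|_{\dot B^{n/2-2}_{2,1}}\|\eta^\ell\|_{\dot B^{n/2}_{2,1}}$, not via the Appendix~A product laws \eqref{key0}--\eqref{key} (those are introduced only for the decay argument of Section~5). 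Beyond that, your identification of the linear contributions of $\div\mathbb{T}$ and $\nabla\eta$, the use of Lemma \ref{fuhe} for $k(a),I(a)$, and the structure of the resulting $\mathcal{N}_1$ all agree with the paper's proof.
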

\begin{proof}
The combination of the first equation and the fourth equation in \eqref{m} is similar to the compressible Navier-Stokes equation up to some nonlinear terms, thus, by using the operators $\p$ and $\q$, we get the equation of incompressible part
\begin{align}\label{tian1}
\partial_t\p \u -\mu\Delta \p \u=-\p (\u\cdot\nabla \u)+\p \div{\mathbb{T}}+\p G(a,\u,\eta,{\mathbb{T}}),
\end{align}
and the equation of compressible part
\begin{eqnarray}\label{tian2}
\left\{\begin{aligned}
&\partial_ta+\div \u=-\div(a\u),\\
&\partial_t\q \u -\nu\Delta \q \u+\nabla
a=-\q(\u\cdot\nabla \u)+\q\div{\mathbb{T}}-\kappa L\nabla\eta +\q G(a,\u,\eta,{\mathbb{T}}).
\end{aligned}\right.
\end{eqnarray}
Applying $\dot{\Delta}_{j}$ to   \eqref{tian1} and taking  $L^2$ inner product of the resulting equation with $\dot{\Delta}_{j}\p {\u}$,
applying the H\"older inequality and
integrating the resultant inequality over $[0, t]$, then multiplying the inequality by $2^{(\frac{n}{2}-1)j}$ and taking summation for $j\le j_0$, we arrive at
\begin{align}\label{er8}
\|\p \u^\ell\|_{ \widetilde{L}_t^\infty(\dot B^{\frac  n2-1}_{2,1})}+&\mu \|\p \u^\ell\|_{L^1_t(\dot B^{\frac  n2+1}_{2,1})}
\lesssim\|\p \u^\ell_0\|_{\dot B^{\frac  n2-1}_{2,1}}
+ \|(\p \div{\mathbb{T}})^\ell\|_{L^1_t(\dot B^{\frac  n2-1}_{2,1})}\nonumber\\
&+ \|(\p (\u\cdot\nabla  \u))^\ell\|_{L^1_t(\dot B^{\frac  n2-1}_{2,1})}+ \|(\p G(a,\u,\eta,{\mathbb{T}}))^\ell\|_{L^1_t(\dot B^{\frac  n2-1}_{2,1})}.
\end{align}

For the equations in \eqref{tian2}, an energy estimates for the barotropic linearized equations (see \cite{bcd}, Prop. 10.23, or \cite{danchin2000}) thus give
\begin{align*}
&\|(a,\q \u)^\ell\|_{\widetilde{L}_t^{\infty}(\dot{B}_{2,1}^{\frac n2-1})}
+\nu\|(a,\q \u)^\ell\|_{L^1_t(\dot{B}_{2,1}^{\frac n2+1})}\nonumber\\
&\quad\lesssim\|( a_0,\q \u_0)^\ell\|_{\dot{B}_{2,1}^{\frac {n}{2}-1}}+\int^t_0\|(\div{\mathbb{T}})^\ell\|_{\dot{B}_{2,1}^{\frac n2-1}}\,ds+\kappa L\int^t_0\|(\nabla\eta)^\ell\|_{\dot{B}_{2,1}^{\frac n2-1}}\,ds\nonumber\\
&\quad\quad+\int^t_0\|(\div(a\u))^\ell\|_{\dot{B}_{2,1}^{\frac n2-1}}\,ds+\int^t_0\|(\u\cdot\nabla \u)^\ell\|_{\dot{B}_{2,1}^{\frac n2-1}}\,ds+\int^t_0\|( G(a,\u,\eta,{\mathbb{T}}))^\ell\|_{\dot{B}_{2,1}^{\frac n2-1}}\,ds.
\end{align*}
Summing up the above two estimates implies that
\begin{align}\label{er9}
&\| (a^\ell, \u^\ell)\|_{ \widetilde{L}_t^\infty(\dot B^{\frac  n2-1}_{2,1})}+\mu \| (a^\ell, \u^\ell)\|_{L^1_t(\dot B^{\frac  n2+1}_{2,1})}\nonumber\\
&\quad\lesssim\|(a^\ell_0, \u_0^\ell)\|_{\dot B^{\frac  n2-1}_{2,1}}
+ \|{\mathbb{T}}\|^\ell_{L^1_t(\dot B^{\frac  n2}_{2,1})}+\kappa L \|\eta\|^\ell_{L^1_t(\dot B^{\frac  n2}_{2,1})}\nonumber\\
&\quad\quad+ \| (\u\cdot\nabla  \u)\|^\ell_{L^1_t(\dot B^{\frac  n2-1}_{2,1})}+ \| \div(a\u)\|^\ell_{L^1_t(\dot B^{\frac  n2-1}_{2,1})}+ \| G(a,\u,\eta,{\mathbb{T}})\|^\ell_{L^1_t(\dot B^{\frac  n2-1}_{2,1})}.
\end{align}
Next, we deal with nonlinear terms on the right hand side of \eqref{er9}.

At first, thanks to Lemma \ref{good}, one has
\begin{align*}
\|(\u\cdot\nabla  \u)^\ell\|_{\dot B^{\frac  n2-1}_{2,1}}\lesssim&(\| \u^\ell\|_{\dot B^{\frac  n2-1}_{2,1}}+\| \u^h\|_{\dot B^{\frac  np-1}_{p,1}})\|\nabla \u\|_{\dot B^{\frac  np}_{p,1}}\nonumber\\
\lesssim&(\| \u^\ell\|_{\dot B^{\frac  n2-1}_{2,1}}+\| \u^h\|_{\dot B^{\frac  np-1}_{p,1}})(\| \u^\ell\|_{\dot B^{\frac  n2+1}_{2,1}}+\| \u^h\|_{\dot B^{\frac  np+1}_{p,1}}).
\end{align*}
Similarly, from Lemmas \ref{good} and \ref{fuhe}, there hold the following three estimates:
\begin{align*}%\label{er11}
\|(\div(a\u))^\ell\|_{\dot B^{\frac  n2-1}_{2,1}}\lesssim&\| \u\|_{\dot B^{\frac  np}_{p,1}}(\|  a^h\|_{\dot B^{\frac  np}_{p,1}}+\|  a^\ell\|_{\dot B^{\frac  n2}_{2,1}})+\| a\|_{\dot B^{\frac  np}_{p,1}}(\|  \u^h\|_{\dot B^{\frac  np}_{p,1}}+\|  \u^\ell\|_{\dot B^{\frac  n2}_{2,1}})\nonumber\\
\lesssim&\| \u^\ell\|_{\dot B^{\frac  n2}_{2,1}}^2+\| \u^h\|_{\dot B^{\frac  np}_{p,1}}^2+\| a^\ell\|_{\dot B^{\frac  n2}_{2,1}}^2+\| a^h\|_{\dot B^{\frac  np}_{p,1}}^2\nonumber\\
\lesssim&(\| (a^\ell,\u^\ell)\|_{\dot B^{\frac  n2-1}_{2,1}}\!\!+\| \u^h\|_{\dot B^{\frac  np-1}_{p,1}})(\| (a^\ell,\u^\ell)\|_{\dot B^{\frac  n2+1}_{2,1}}+\| \u^h\|_{\dot B^{\frac  np+1}_{p,1}})\!\!+\| a^h\|_{\dot B^{\frac  np}_{p,1}}^2,
\end{align*}
\begin{align*}%\label{er12}
\| I(a)( \div{\mathbb{T}}- \nabla\eta)\|^\ell_{\dot{B}_{2,1}^{\frac {n}{2}-1}}
\lesssim&\| I(a)\|_{\dot{B}_{p,1}^{\frac {n}{p}}}(\| (\div{\mathbb{T}}- \nabla\eta)^\ell\|_{\dot{B}_{2,1}^{\frac {n}{2}-1}}+\| (\div{\mathbb{T}}- \nabla\eta)^h\|_{\dot{B}_{p,1}^{\frac {n}{p}-1}})\nonumber\\
\lesssim&(\|a^\ell\|_{\dot B^{\frac  n2-1}_{2,1}}\!\!+\|a^h\|_{\dot B^{\frac  np}_{p,1}})(\|{\mathbb{T}}^\ell\|_{\dot B^{\frac  n2}_{2,1}}\!\!+\|{\mathbb{T}}^h\|_{\dot B^{\frac  np}_{p,1}}\!\!+\|\eta^\ell\|_{\dot B^{\frac  n2}_{2,1}}\!\!+\|\eta^h\|_{\dot B^{\frac  np+1}_{p,1}}),
\end{align*}
\begin{align*}%\label{er13}
\| (1-I(a))\eta\nabla\eta\|^\ell_{\dot{B}_{2,1}^{\frac {n}{2}-1}}
\lesssim&(1+\| I(a)\|_{\dot{B}_{p,1}^{\frac {n}{p}}})(\| (\eta\nabla\eta)^\ell\|_{\dot{B}_{2,1}^{\frac {n}{2}-1}}+\| (\eta\nabla\eta)^h\|_{\dot{B}_{p,1}^{\frac {n}{p}-1}})\nonumber\\
\lesssim&(\|a^\ell\|_{\dot B^{\frac  n2-1}_{2,1}}+\|a^h\|_{\dot B^{\frac  np}_{p,1}})(\|\eta^\ell\|_{\dot B^{\frac  n2-2}_{2,1}}\|\eta^\ell\|_{\dot B^{\frac  n2}_{2,1}}+\|\eta^h\|_{\dot B^{\frac  np-1}_{p,1}}\|\eta^h\|_{\dot B^{\frac  np+1}_{p,1}}).
\end{align*}
Along the same lines, one can deal with the rest two terms in $G(a,\u,\eta,{\mathbb{T}})$, thus, substituting the above inequality into \eqref{er9} leads to  the lemma, and we complete
the proof of Lemma \ref{jia3}.
\end{proof}

\subsection{ The estimates in the high frequency}
In this subsection, we are concerned with the estimates in the high frequency  part.

\begin{lemma}\label{jia4}
Under the conditions in Theorem \ref{dingli2},   there holds
\begin{align}\label{er33}
&\|( \u^h,\eta^h)\|_{ \widetilde{L}_t^\infty(\dot B^{\frac  np-1}_{p,1})}+\|(a^h,{\mathbb{T}}^h)\|_{ \widetilde{L}_t^\infty(\dot B^{\frac  np}_{p,1})}+  \mu\|\u^h\|_{L^1_t(\dot B^{\frac  np+1}_{p,1})}+ \lon\|\eta^h\|_{L^1_t(\dot B^{\frac  np+1}_{p,1})}\nonumber\\
&\quad\quad+ \lon\|{\mathbb{T}}^h\|_{L^1_t(\dot B^{\frac  np+2}_{p,1})}+ \nu^{-1}\|a^h\|_{L^1_t(\dot B^{\frac  np}_{p,1})}+ \frac{A_0}{2\lambda_1}\|{\mathbb{T}}^h\|_{L^1_t(\dot B^{\frac  np}_{p,1})}\nonumber\\
&\quad\lesssim \|(\u_0^h,\eta_0^h)\|_{\dot B^{\frac  np-1}_{p,1}}+\|(a_0^h,{\mathbb{T}}_0^h)\|_{\dot B^{\frac  np}_{p,1}}+\int_0^t \mathcal{N}_1(s)\,ds
\end{align}
with $\mathcal{N}_1(t)$ defined in \eqref{er15}.
\end{lemma}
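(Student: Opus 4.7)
The plan is to derive the four high-frequency estimates largely in parallel, then close the estimate by dominating all nonlinear sources by $\mathcal{N}_1$ as in \eqref{er15}. For each unknown, I localize the corresponding equation in \eqref{m} by $\ddj$, run the appropriate linear energy argument at the dyadic level, multiply by $2^{js}$ with the correct $s$, and sum over $j>j_0$.

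For $\eta^h$, the equation $\partial_t\eta-\lon\Delta\eta=-\mathrm{div}(\eta\u)$ is purely parabolic, so Lemma~\ref{heat} applied with $\sigma=\f np-1$ yields the full maximal-regularity pair $\widetilde L^\infty_t(\dot B^{\f np-1}_{p,1})\cap L^1_t(\dot B^{\f np+1}_{p,1})$ with the factor $\lon$. For ${\mathbb{T}}^h$, the equation is heat-plus-damping, so Lemma~\ref{heat} combined with integration of the damping $\f{A_0}{2\lambda_1}{\mathbb{T}}$ against $|\ddj{\mathbb{T}}|^{p-2}\ddj{\mathbb{T}}$ gives simultaneously the $L^1_t(\dot B^{\f np+2}_{p,1})$ diffusion gain and the $L^1_t(\dot B^{\f np}_{p,1})$ damping gain.

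The genuinely coupled step is the compressible subsystem $(a^h,\u^h)$, which after decomposition via $\p$ and $\q$ reduces to a Stokes-type system for $\p\u$ and a hyperbolic-parabolic system for $(a,\q\u)$. For the latter I would follow \cite{danchin2014,haspot}: at the dyadic level introduce the effective velocity $\ddj w=\ddj\q\u+\nu^{-1}\nabla(-\Delta)^{-1}\ddj a$ (or equivalently build a Lyapunov functional incorporating the cross term $\langle \ddj a,\mathrm{div}\,\ddj \u\rangle$), which recasts the linearized $(a,\q\u)$-block into a system for which the pressure term $\nabla a$ produces an $L^p$ damping of order $\nu^{-1}$ on $\ddj a$ in high frequencies. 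Summing over $j>j_0$ at the level $\dot B^{\f np}_{p,1}\times\dot B^{\f np-1}_{p,1}$ and reverting from $w$ to $\u$ yields the claimed bound together with the $\nu^{-1}\|a^h\|_{L^1_t(\dot B^{\f np}_{p,1})}$ damping and the $\mu\|\u^h\|_{L^1_t(\dot B^{\f np+1}_{p,1})}$ parabolic gain.

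It then remains to dominate the high-frequency nonlinear sources — the transport $\u\cdot\nabla{\mathbb{T}}$, the stretching $F({\mathbb{T}},\u)$, the convection $\u\cdot\nabla\u$, $\mathrm{div}(a\u)$, and the four pieces of $G(a,\u,\eta,{\mathbb{T}})$ — by the quantity $\mathcal{N}_1$. For each product, I split every factor into its low and high frequency parts and apply Lemma~\ref{daishu} (with the sharper endpoint given by Lemma~\ref{good} when the low-high mismatch needs to be absorbed). The compositions $I(a)$ and $k(a)$ are handled via Lemma~\ref{fuhe}, which is applicable because the smallness \eqref{smallness} keeps $1+a$ bounded away from zero. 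The main obstacle will be matching \emph{exactly} the mixed regularity pattern in $\mathcal{N}_1$: each nonlinear bound must terminate as a product of one low-frequency $L^2$-based norm and one high-frequency $L^p$-based norm (or two $L^p$-based norms). This forces systematic use of the Bernstein-type embedding $\dot B^{\f n2-\f np}_{2,1}\hookrightarrow \dot B^0_{p,1}$, valid precisely under the range $2\le p\le 2n/(n-2)$ assumed in Theorem~\ref{dingli2}, to transfer the low-frequency $L^2$-based control into the $L^p$-based frame where the product estimate takes place. The most delicate term is the quadratic $\zeta(1-I(a))\eta\nabla\eta$, whose high-frequency piece requires combining the low-frequency $\dot B^{\f n2-2}_{2,1}$ control of $\eta$ with its $\dot B^{\f n2}_{2,1}$ norm — which is exactly why both of those norms appear multiplicatively inside $\mathcal{N}_1$.
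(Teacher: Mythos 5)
Your outline captures the right skeleton — effective velocity for the compressible block, parabolic maximal regularity for $\eta$ and $\mathbb{T}$, product laws to close — but it elides a step that is not optional: the convection terms $\u\cdot\nabla a$, $\u\cdot\nabla\eta$ and $\u\cdot\nabla\mathbb{T}$ in high frequencies must be handled by localizing \emph{first} and using the commutator Lemma~\ref{jiaohuanzi}, not by treating them as nonlinear sources and applying Lemma~\ref{daishu}. You list $\operatorname{div}(a\u)$ and $\u\cdot\nabla\mathbb{T}$ among the ``sources to dominate by $\mathcal{N}_1$,'' which is the wrong picture; the paper instead writes $\dot\Delta_j(\u\cdot\nabla f)=\u\cdot\nabla\dot\Delta_j f+[\dot\Delta_j,\u\cdot\nabla]f$, integrates the first piece by parts to produce $\|\operatorname{div}\u\|_{L^\infty}\|\dot\Delta_j f\|_{L^p}$, and bounds the commutator via Lemma~\ref{jiaohuanzi}; see \eqref{er22}, \eqref{er23} and especially \eqref{er30}.

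The gap is fatal for $a$. After passing to $\Gamma=\q\u+\nu^{-1}(-\Delta)^{-1}\nabla a$, the equation for $a$ is \eqref{er29}, a pure transport--damping equation with no parabolic smoothing, and the bootstrap norm $X(t)$ controls $a^h$ only at the single regularity level $\dot B^{n/p}_{p,1}$, in both $\widetilde L^\infty_t$ and $L^1_t$. If you try to estimate the source $\u\cdot\nabla a$ directly in $L^1_t(\dot B^{n/p}_{p,1})$ via Lemma~\ref{daishu}, every admissible split requires a norm of $a$ at level $n/p+1$, which is never available (the alternative split puts $\u$ at level $n/p+1$ on the first slot, violating $s_1\le n/p$). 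Only the commutator decomposition shifts the derivative onto $\u$, yielding $\|\nabla\u\|_{\dot B^{n/p}_{p,1}}\|a^h\|_{\dot B^{n/p}_{p,1}}$, which closes. (For $\eta$ and $\mathbb{T}$ one could, at the cost of extra interpolation and time Cauchy--Schwarz, evade the commutator since the diffusion provides a one-derivative gap, but the commutator is what the paper uses and it yields the $\mathcal{N}_1$ structure directly.) You cite Danchin--Haspot, whose arguments do contain this commutator step; your write-up should make it explicit rather than absorbing $\operatorname{div}(a\u)$ into the generic ``product law'' bucket, because that bucket does not in fact contain it.
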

\begin{proof}

By a standard energy estimates, we can get from \eqref{tian1} that
\begin{align}\label{er16}
&\|\p \u^h\|_{ \widetilde{L}_t^\infty(\dot B^{\frac  np-1}_{p,1})}+\mu \|\p \u^h\|_{L^1_t(\dot B^{\frac  np+1}_{p,1})}\nonumber\\
&\quad\lesssim\|\p \u^h_0\|_{\dot B^{\frac  np-1}_{p,1}}
+ \| (\div{\mathbb{T}})^h\|_{L^1_t(\dot B^{\frac  np-1}_{p,1})}\nonumber\\
&\quad\quad+ \|(\u\cdot\nabla  \u)^h\|_{L^1_t(\dot B^{\frac  np-1}_{p,1})}+ \| (G(a,\u,\eta,{\mathbb{T}}))^h\|_{L^1_t(\dot B^{\frac  np-1}_{p,1})}.
\end{align}
By the product law in Lemma \ref{daishu}, we have
\begin{align}\label{er18}
\|(\u\cdot\nabla  \u)^h\|_{\dot B^{\frac  np-1}_{p,1}}\lesssim&\|\u\|_{\dot B^{\frac  np-1}_{p,1}}\|\u\|_{\dot B^{\frac  np+1}_{p,1}}\nonumber\\
\lesssim&(\| \u^\ell\|_{\dot B^{\frac  n2-1}_{2,1}}+\| \u^h\|_{\dot B^{\frac  np-1}_{p,1}})(\| \u^\ell\|_{\dot B^{\frac  n2+1}_{2,1}}+\| \u^h\|_{\dot B^{\frac  np+1}_{p,1}}).
\end{align}
The  first two terms of $G(a,\u,\eta,{\mathbb{T}})$ can be estimated in the same way as \eqref{er18}. To bound the third term of $G(a,\u,\eta,{\mathbb{T}})$, we use Lemmas \ref{daishu}, \ref{fuhe} and interpolation inequality to get
\begin{align}\label{er19}
&\| I(a)( \div{\mathbb{T}}- \nabla\eta)\|^h_{\dot{B}_{p,1}^{\frac {n}{p}-1}}\lesssim\| I(a)\|_{\dot{B}_{p,1}^{\frac {n}{p}}}\|  \div{\mathbb{T}}- \nabla\eta\|_{\dot{B}_{p,1}^{\frac {n}{p}-1}}\nonumber\\
&\quad\lesssim\| I(a)\|_{\dot{B}_{p,1}^{\frac {n}{p}}}(\| (\div{\mathbb{T}}- \nabla\eta)^\ell\|_{\dot{B}_{2,1}^{\frac {n}{2}-1}}+\| (\div{\mathbb{T}}- \nabla\eta)^h\|_{\dot{B}_{p,1}^{\frac {n}{p}-1}})\nonumber\\
&\quad\lesssim(\|a^\ell\|_{\dot B^{\frac  n2-1}_{2,1}}+\|a^h\|_{\dot B^{\frac  np}_{p,1}})(\|{\mathbb{T}}^\ell\|_{\dot B^{\frac  n2}_{2,1}}+\|{\mathbb{T}}^h\|_{\dot B^{\frac  np}_{p,1}}+\|\eta^\ell\|_{\dot B^{\frac  n2}_{2,1}}+\|\eta^h\|_{\dot B^{\frac  np}_{p,1}})
\nonumber\\
&\quad\lesssim(\|a^\ell\|_{\dot B^{\frac  n2-1}_{2,1}}+\|a^h\|_{\dot B^{\frac  np}_{p,1}})(\|{\mathbb{T}}^\ell\|_{\dot B^{\frac  n2}_{2,1}}+\|{\mathbb{T}}^h\|_{\dot B^{\frac  np}_{p,1}}+\|\eta^\ell\|_{\dot B^{\frac  n2}_{2,1}}+\|\eta^h\|_{\dot B^{\frac  np+1}_{p,1}}).
\end{align}
Similarly,
\begin{align}\label{er20}
&\|(1-I(a))\eta\nabla\eta\|^h_{\dot{B}_{p,1}^{\frac {n}{p}-1}}\nonumber\\
&\quad\lesssim(1+\| I(a)\|_{\dot{B}_{p,1}^{\frac {n}{p}}})(\| (\eta\nabla\eta)^\ell\|_{\dot{B}_{2,1}^{\frac {n}{2}-1}}+\| (\eta\nabla\eta)^h\|_{\dot{B}_{p,1}^{\frac {n}{p}-1}})\nonumber\\
&\quad\lesssim(\|a^\ell\|_{\dot B^{\frac  n2-1}_{2,1}}+\|a^h\|_{\dot B^{\frac  np}_{p,1}})(\|\eta^\ell\|_{\dot B^{\frac  n2}_{2,1}}^2+\|\eta^h\|_{\dot B^{\frac  np}_{p,1}}^2)\nonumber\\
&\quad\lesssim(\|a^\ell\|_{\dot B^{\frac  n2-1}_{2,1}}+\|a^h\|_{\dot B^{\frac  np}_{p,1}})(\|\eta^\ell\|_{\dot B^{\frac  n2-2}_{2,1}}\|\eta^\ell\|_{\dot B^{\frac  n2}_{2,1}}+\|\eta^h\|_{\dot B^{\frac  np-1}_{p,1}}\|\eta^h\|_{\dot B^{\frac  np+1}_{p,1}}).
\end{align}
Inserting the above estimates into \eqref{er16}, we have
\begin{align}\label{er21}
\|\p \u^h\|_{ \widetilde{L}_t^\infty(\dot B^{\frac  np-1}_{p,1})}+\mu \|\p \u^h\|_{L^1_t(\dot B^{\frac  np+1}_{p,1})}
\lesssim&\|\p \u^h_0\|_{\dot B^{\frac  np-1}_{p,1}}
+ \| {\mathbb{T}}\|^h_{L^1_t(\dot B^{\frac  np}_{p,1})}+\int_0^t \mathcal{N}_1(s)\,ds
\end{align}
with $\mathcal{N}_1(t)$ defined in \eqref{er15}.

Similarly, from the second equation of \eqref{m}, we have
\begin{align}\label{er22}
&\|\eta^h\|_{ \widetilde{L}_t^\infty(\dot B^{\frac  np-1}_{p,1})}+\lon \|\eta^h\|_{L^1_t(\dot B^{\frac  np+1}_{p,1})}\nonumber\\
&\quad\lesssim \|\eta_0^h\|_{\dot B^{\frac  np-1}_{p,1}}+\int_0^t\|(\eta\div \u)^h\|_{\dot B^{\frac  np-1}_{p,1}}\,ds
\nonumber\\
&\quad\quad+\int_0^t\|\div \u\|_{L^\infty}\|\eta^h\|_{\dot B^{\frac  np-1}_{p,1}}\,ds+\int_0^t\sum_{j\ge j_0}2^{(\frac {n}{p}-1)j}\|[\ddj,\u\cdot\nabla]\eta\|_{L^p}\,ds,
\end{align}
and from the third equation of \eqref{m}, we have
\begin{align}\label{er23}
&\|{\mathbb{T}}^h\|_{ \widetilde{L}_t^\infty(\dot B^{\frac  np}_{p,1})}+\frac{A_0}{2\lambda_1}
 \|{\mathbb{T}}^h\|_{L^1_t(\dot B^{\frac  np}_{p,1})}+ \lon\|{\mathbb{T}}^h\|_{L^1_t(\dot B^{\frac  np+2}_{p,1})}\nonumber\\
&\quad\lesssim \|{\mathbb{T}}_0^h\|_{\dot B^{\frac  np}_{p,1}}+\frac{\kappa\,A_0}{2\lambda_1}\int_0^t\|\eta^h\|_{\dot B^{\frac  np}_{p,1}}\,ds+\int_0^t\|(F({\mathbb{T}}, \u))^h\|_{\dot B^{\frac  np}_{p,1}}\,ds
\nonumber\\
&\quad\quad+\int_0^t\|\div \u\|_{L^\infty}\|{\mathbb{T}}^h\|_{\dot B^{\frac  np}_{p,1}}\,ds+\int_0^t\sum_{j\ge j_0}2^{\frac {n}{p}j}\|[\ddj,\u\cdot\nabla]{\mathbb{T}}\|_{L^p}\,ds.
\end{align}
Due to Lemma \ref{daishu} and Lemma \ref{jiaohuanzi}, we get
\begin{align}\label{er24}
&\sum_{j\ge j_0}2^{(\frac {n}{p}-1)j}\|[\ddj,\u\cdot\nabla]\eta\|_{L^p}+\|(\eta\div \u)^h\|_{\dot B^{\frac  np-1}_{p,1}}\nonumber\\
&\quad\lesssim\|\eta\|_{\dot B^{\frac  np-1}_{p,1}}\|\nabla \u\|_{\dot B^{\frac  np}_{p,1}}\nonumber\\
&\quad\lesssim(\|\eta^\ell\|_{\dot B^{\frac  n2-2}_{2,1}}+\|\eta^h\|_{\dot B^{\frac  np-1}_{p,1}})(\|\u^\ell\|_{\dot B^{\frac  n2+1}_{2,1}}+\|\u^h\|_{\dot B^{\frac  np+1}_{p,1}})
\end{align}
and
\begin{align}\label{er25}
&\sum_{j\ge j_0}2^{\frac {n}{p}j}\|[\ddj,\u\cdot\nabla]{\mathbb{T}}\|_{L^p}+\|(F({\mathbb{T}}, \u))^h\|_{\dot B^{\frac  np}_{p,1}}\nonumber\\
&\quad\lesssim\|{\mathbb{T}}\|_{\dot B^{\frac  np}_{p,1}}\|\nabla \u\|_{\dot B^{\frac  np}_{p,1}}\nonumber\\
&\quad\lesssim(\|{\mathbb{T}}^\ell\|_{\dot B^{\frac  n2}_{2,1}}+\|{\mathbb{T}}^h\|_{\dot B^{\frac  np}_{p,1}})(\|\u^\ell\|_{\dot B^{\frac  n2+1}_{2,1}}+\|\u^h\|_{\dot B^{\frac  np+1}_{p,1}}),
\end{align}
from which, we get by summing up \eqref{er22} and \eqref{er23} that
\begin{align}\label{er26}
&\|\eta^h\|_{ \widetilde{L}_t^\infty(\dot B^{\frac  np-1}_{p,1})}+\|{\mathbb{T}}^h\|_{ \widetilde{L}_t^\infty(\dot B^{\frac  np}_{p,1})}+ \frac{A_0}{2\lambda_1}\|{\mathbb{T}}^h\|_{L^1_t(\dot B^{\frac  np}_{p,1})}+\frac{\lon}{2} \|\eta^h\|_{L^1_t(\dot B^{\frac  np+1}_{p,1})}+\lon\|{\mathbb{T}}^h\|_{L^1_t(\dot B^{\frac  np+2}_{p,1})}\nonumber\\
&\quad\lesssim \|\eta_0^h\|_{\dot B^{\frac  np-1}_{p,1}}+\|{\mathbb{T}}_0^h\|_{\dot B^{\frac  np}_{p,1}}
\nonumber\\
&\quad\quad+\int_0^t(\|\eta^\ell\|_{\dot B^{\frac  n2-2}_{2,1}}+\|{\mathbb{T}}^\ell\|_{\dot B^{\frac  n2}_{2,1}}+\|\eta^h\|_{\dot B^{\frac  np-1}_{p,1}}+\|{\mathbb{T}}^h\|_{\dot B^{\frac  np}_{p,1}})(\|\u^\ell\|_{\dot B^{\frac  n2+1}_{2,1}}+\|\u^h\|_{\dot B^{\frac  np+1}_{p,1}})\,ds.
\end{align}

To estimate the high frequency part of $(a,\q \u)$, we follow the method used in  \cite{chenzhimin}, \cite{haspot} to introduce a new quantity
$$\ga\stackrel{\mathrm{def}}{=}\q \u+\nu^{-1}(-\Delta)^{-1}\nabla a,$$
 from which and the second equation of \eqref{tian2}, we have
\begin{align}\label{er27}
\partial_t \ga-\nu\Delta \ga
=&\nu^{-1}\ga-\nu^{-2}(-\Delta)^{-1}\nabla a\nonumber\\
&+\nu^{-1}\q(a\u)-\q(\u\cdot\nabla \u)+\q\div{\mathbb{T}}-\kappa L\nabla\eta +\q G(a,\u,\eta,{\mathbb{T}}).
\end{align}

We   get by a standard energy argument that
\begin{align}\label{er28}
&\|\ga^h\|_{ \widetilde{L}_t^\infty(\dot B^{\frac  np-1}_{p,1})}+ \nu\|\ga^h\|_{L^1_t(\dot B^{\frac  np+1}_{p,1})}\nonumber\\
&\quad\lesssim \|\ga_0^h\|_{\dot B^{\frac  np-1}_{p,1}}+\int_0^t(\nu^{-1}\|\ga^h\|_{\dot B^{\frac  np-1}_{p,1}}+\nu^{-2}\|a^h\|_{\dot B^{\frac  np-2}_{p,1}})\,ds+\int_0^t\| {\mathbb{T}}^h\|_{\dot B^{\frac  np}_{p,1}}\,ds+\kappa L\int_0^t\|\eta^h\|_{\dot B^{\frac  np}_{p,1}}\,ds\nonumber\\
&\quad\quad+\int_0^t\|(\u\cdot\nabla \u)^h\|_{\dot B^{\frac  np-1}_{p,1}}\,ds+\int_0^t\|(G(a,\u,\eta,{\mathbb{T}}))^h\|_{\dot B^{\frac  np-1}_{p,1}}\,ds+\nu^{-1}\int_0^t\|( a\u)^h\|_{\dot B^{\frac  np-1}_{p,1}}\,ds.
\end{align}
Plugging $\q \u=\ga-\nu^{-1}(-\Delta)^{-1}\nabla a$ into the first equation in \eqref{tian2} gives
\begin{align}\label{er29}
&\partial_t a +\nu^{-1}a+\u\cdot \nabla a=-\div \ga-a\div \u,
\end{align}
which further implies that
\begin{align}\label{er30}
&\|a^h\|_{ \widetilde{L}_t^\infty(\dot B^{\frac  np}_{p,1})}+ \nu^{-1}\|a^h\|_{L^1_t(\dot B^{\frac  np}_{p,1})}\nonumber\\
&\quad\lesssim \|a_0^h\|_{\dot B^{\frac  np}_{p,1}}+\int_0^t\|\ga^h\|_{\dot B^{\frac  np+1}_{p,1}}\,ds+\int_0^t\|(a\div \u)^h\|_{\dot B^{\frac  np+1}_{p,1}}\,ds
\nonumber\\
&\quad\quad+\int_0^t\|\div \u\|_{L^\infty}\|a^h\|_{\dot B^{\frac  np}_{p,1}}\,ds+\int_0^t\sum_{j\ge j_0}2^{\frac {n}{p}j}\|[\ddj,\u\cdot\nabla]a\|_{L^p}\,ds.
\end{align}
From Lemma \ref{daishu} and interpolation inequality, we have
\begin{align*}%\label{er31}
\|( a\u)^h\|_{\dot B^{\frac  np-1}_{p,1}}\lesssim&\| ( a\u)^h\|_{\dot B^{\frac  np}_{p,1}}\lesssim\| a\|_{\dot B^{\frac  np}_{p,1}}\| \u\|_{\dot B^{\frac  np}_{p,1}}\nonumber\\
\lesssim&\| a\|_{\dot B^{\frac  np}_{p,1}}^2+\| \u\|_{\dot B^{\frac  np}_{p,1}}^2\nonumber\\
\lesssim&\| a^\ell\|_{\dot B^{\frac  n2-1}_{2,1}}\| a^\ell\|_{\dot B^{\frac  n2+1}_{2,1}}+\| a^h\|_{\dot B^{\frac  np}_{p,1}}^2+\| \u^\ell\|_{\dot B^{\frac  n2-1}_{2,1}}\| \u^\ell\|_{\dot B^{\frac  n2+1}_{2,1}}+\| \u^h\|_{\dot B^{\frac  np-1}_{p,1}}\| \u^h\|_{\dot B^{\frac  np+1}_{p,1}}.
\end{align*}
The other nonlinear terms on the right hand side of \eqref{er28}, \eqref{er30} can be estimates  similarly to \eqref{er18}, \eqref{er19}, \eqref{er24}, \eqref{er25},
thus, the combination of \eqref{er28} and \eqref{er30} implies that
\begin{align}\label{er32}
&\|a^h\|_{ \widetilde{L}_t^\infty(\dot B^{\frac  np}_{p,1})}+\|\ga^h\|_{ \widetilde{L}_t^\infty(\dot B^{\frac  np-1}_{p,1})} +\frac{1}{2\nu}\|a^h\|_{L^1_t(\dot B^{\frac  np}_{p,1})}+ \frac\nu2\|\ga^h\|_{L^1_t(\dot B^{\frac  np+1}_{p,1})}\nonumber\\
&\quad\lesssim \|a_0^h\|_{\dot B^{\frac  np}_{p,1}}+\|\ga_0^h\|_{\dot B^{\frac  np-1}_{p,1}}+\|{\mathbb{T}}^h\|_{L^1_t(\dot B^{\frac  np}_{p,1})}+\kappa L\|\eta^h\|_{L^1_t(\dot B^{\frac  np}_{p,1})}+\int_0^t \mathcal{N}_1(s)\,ds.
\end{align}
%with $\mathcal{N}_1(t)$ defined in \eqref{er15}.

Combining with \eqref{er21}, \eqref{er26} and \eqref{er32} and using
$\q \u=\ga-\nu^{-1}(-\Delta)^{-1}\nabla a$, we can obtain \eqref{er33}.

\end{proof}

\subsection{ Complete the proof of  Theorem \ref{dingli2}}
Now, we can complete the proof of our main Theorem \ref{dingli2} by the continuous arguments.
To accelerate the proof, we first
denote
\begin{align*}%\label{yi5}
X(t)\stackrel{\mathrm{def}}{=}&\|(a,\u)\|^\ell_{\widetilde{L}^\infty_t(\dot{B}_{2,1}^{\frac{n}{2}-1})}
+\|\eta\|^\ell_{\widetilde{L}^\infty_t(\dot{B}_{2,1}^{\frac{n}{2}-2})}
+\|{\mathbb{T}}\|^\ell_{\widetilde{L}^\infty_t(\dot{B}_{2,1}^{\frac{n}{2}})}
+\|(\u,\eta)\|^h_{\widetilde{L}^\infty_t(\dot{B}_{p,1}^{\frac{n}{p}-1})}
\nonumber\\
&\quad+\|(a,{\mathbb{T}})\|^h_{\widetilde{L}^\infty_t(\dot{B}_{p,1}^{\frac{n}{p}})}+\|(a,\u)\|^\ell_{L^1_t(\dot{B}_{2,1}^{\frac{n}{2}+1})}+\|(\eta,{\mathbb{T}})\|^\ell_{L^1_t(\dot{B}_{2,1}^{\frac{n}{2}})}+ \|{\mathbb{T}}^\ell\|_{L^1_t(\dot B^{\frac  n2+2}_{2,1})}\nonumber\\
&\quad\quad+\|(a,{\mathbb{T}})\|^h_{L^1_t(\dot{B}_{p,1}^{\frac{n}{p}})}+\|(\u,\eta)\|^h_{L^1_t(\dot{B}_{p,1}^{\frac{n}{p}+1})}
+\|{\mathbb{T}}\|^h_{L^1_t(\dot{B}_{p,1}^{\frac{n}{p}+2})},
\nonumber\\
X_0\stackrel{\mathrm{def}}{=}&
\|(a^\ell_0, \u_0^\ell)\|_{\dot B^{\frac  n2-1}_{2,1}}+\|\eta_0^\ell\|_{\dot{B}_{2,1}^{\frac {n}{2}-2}}+\|{\mathbb{T}}_0^\ell\|_{\dot B^{\frac  n2}_{2,1}}+\|(\u_0^h,\eta_0^h)\|_{\dot B^{\frac  np-1}_{p,1}}+\|(a_0^h,{\mathbb{T}}_0^h)\|_{\dot B^{\frac  np}_{p,1}}.
\end{align*}

Multiplying by a suitable large constant on both sides of \eqref{er7} and then pulsing \eqref{er4}, we can  finally get by combining  the resulting inequality with
\eqref{er14} that
\begin{align}\label{er14+1}
&\| (a^\ell, \u^\ell)\|_{ \widetilde{L}_t^\infty(\dot B^{\frac  n2-1}_{2,1})}+\|\eta^\ell\|_{\widetilde{L}_t^{\infty}(\dot{B}_{2,1}^{\frac n2-2})}+\|{\mathbb{T}}^\ell\|_{ \widetilde{L}_t^\infty(\dot B^{\frac  n2}_{2,1})}
\nonumber\\
&\quad\quad
+ \mu\| (a^\ell, \u^\ell)\|_{L^1_t(\dot B^{\frac  n2+1}_{2,1})}+\frac{\lon}{2}\|\eta^\ell\|_{L^1_t(\dot B^{\frac  n2}_{2,1})}+ \frac{\lon}{2}\|{\mathbb{T}}^\ell\|_{L^1_t(\dot B^{\frac  n2+2}_{2,1})}+\frac{A_0}{4\lambda_1}
 \|{\mathbb{T}}^\ell\|_{L^1_t(\dot B^{\frac  n2}_{2,1})}\nonumber\\
&\quad\lesssim\|(a^\ell_0, \u_0^\ell)\|_{\dot B^{\frac  n2-1}_{2,1}}+\|\eta_0^\ell\|_{\dot{B}_{2,1}^{\frac {n}{2}-2}}+\|{\mathbb{T}}_0^\ell\|_{\dot B^{\frac  n2}_{2,1}}
+\int_0^t \mathcal{N}_1(s)\,ds.
\end{align}

Next,
combining with \eqref{er33} and \eqref{er14+1}, we can get
\begin{align}\label{energy3}
X(t)\le X_0+C(X(t))^2(1+CX(t)).
\end{align}

Under the setting of initial data in Theorem\ref{dingli2},  there exists a positive constant $C_0$ such that
$X_0\leq C_0 \epsilon$. Due to the local existence result which has been  achieved by Theorem \ref{dingli1}, there exists a positive time $T$ such that
\begin{equation}\label{re}
 X (t) \leq 2 C_0\ \epsilon , \quad  \forall \; t \in [0, T].
\end{equation}
Let $T^{*}$ be the largest possible time of $T$ for what \eqref{re} holds. Now, we only need to show $T^{*} = \infty$.  By the estimate of \eqref{energy3}, we can use
 a standard continuation argument to prove that $T^{*} = \infty$ provided that $\epsilon$ is small enough.  We omit the details here. Hence, we finish the proof of Theorem \ref{dingli2}. $\hspace{12.5cm}\square$

\section{The proof of Theorem \ref{dingli3}}
In this section, we shall follow the method (independent of the spectral analysis) used in \cite{guoyan} and \cite{xujiang2019arxiv}  to get the decay rate of the solutions constructed in the previous section. For convenience, we assume all the coefficients appeared in \eqref{sys} equal to one.
From the proof of Theorem \ref{dingli2}, we can get the following inequality (see the derivation of \eqref{er14+1} and \eqref{er33} for more details):
 \begin{align}\label{sa1}
&\frac{d}{dt}(\|(a,\u)\|^\ell_{\dot{B}_{2,1}^{\frac{n}{2}-1}}
+\|\eta\|^\ell_{\dot{B}_{2,1}^{\frac{n}{2}-2}}
+\|{\mathbb{T}}\|^\ell_{\dot{B}_{2,1}^{\frac{n}{2}}}
+\|(\u,\eta)\|^h_{\dot{B}_{p,1}^{\frac{n}{p}-1}}
+\|(a,{\mathbb{T}})\|^h_{\dot{B}_{p,1}^{\frac{n}{p}}})
\nonumber\\
&\quad+\|(a,\u)\|^\ell_{\dot{B}_{2,1}^{\frac{n}{2}+1}}+\|(\eta,{\mathbb{T}})\|^\ell_{\dot{B}_{2,1}^{\frac{n}{2}}}
+\|a\|^h_{\dot{B}_{p,1}^{\frac{n}{p}}}+\|(\u,\eta)\|^h_{\dot{B}_{p,1}^{\frac{n}{p}+1}}
+\|{\mathbb{T}}\|^h_{\dot{B}_{p,1}^{\frac{n}{p}+2}}\nonumber\\
&\lesssim(\| (a^\ell,\u^\ell)\|_{\dot B^{\frac  n2-1}_{2,1}}+\| \u^h\|_{\dot B^{\frac  np-1}_{p,1}})(\| \u^h\|_{\dot B^{\frac  np+1}_{p,1}}+\| (a^\ell,\u^\ell)\|_{\dot B^{\frac  n2+1}_{2,1}}+\|\eta^\ell\|_{\dot B^{\frac  n2}_{2,1}}+\|\eta^h\|_{\dot B^{\frac  np}_{p,1}})\nonumber\\
&\quad+(\|a^\ell\|_{\dot B^{\frac  n2-1}_{2,1}}+\|(a^h,{\mathbb{T}}^h)\|_{\dot B^{\frac  np}_{p,1}})(\| (a^h,{\mathbb{T}}^h)\|_{\dot B^{\frac  np}_{p,1}}+\|\eta^\ell\|_{\dot B^{\frac  n2-2}_{2,1}}\|\eta^\ell\|_{\dot B^{\frac  n2}_{2,1}}+\|\eta^h\|_{\dot B^{\frac  np-1}_{p,1}}\|\eta^h\|_{\dot B^{\frac  np+1}_{p,1}})\nonumber\\
&\quad+(\|a^\ell\|_{\dot B^{\frac  n2-1}_{2,1}}+\|{\mathbb{T}}^\ell\|_{\dot B^{\frac  n2}_{2,1}}+\| a^h\|_{\dot B^{\frac  np}_{p,1}})(\|{\mathbb{T}}^\ell\|_{\dot B^{\frac  n2}_{2,1}}+\| {\mathbb{T}}^h\|_{\dot B^{\frac  np}_{p,1}}+\|\eta^\ell\|_{\dot B^{\frac  n2}_{2,1}}+\|\eta^h\|_{\dot B^{\frac  np+1}_{p,1}}).
\end{align}
By Theorem \ref{dingli2}, the following estimate holds:
\begin{align}\label{sa2}
&\|(a,\u)\|^\ell_{\widetilde{L}^\infty_t(\dot{B}_{2,1}^{\frac{n}{2}-1})}
+\|\eta\|^\ell_{\widetilde{L}^\infty_t(\dot{B}_{2,1}^{\frac{n}{2}-2})}
+\|{\mathbb{T}}\|^\ell_{\widetilde{L}^\infty_t(\dot{B}_{2,1}^{\frac{n}{2}})}
+\|(\u,\eta)\|^h_{\widetilde{L}^\infty_t(\dot{B}_{p,1}^{\frac{n}{p}-1})}
+\|(a,{\mathbb{T}})\|^h_{\widetilde{L}^\infty_t(\dot{B}_{p,1}^{\frac{n}{p}})}\leq c_0,
\end{align}
from which we can infer from  \eqref{sa1}  that
\begin{align}\label{sa3}
&\frac{d}{dt}(\|(a,\u)\|^\ell_{\dot{B}_{2,1}^{\frac{n}{2}-1}}
+\|\eta\|^\ell_{\dot{B}_{2,1}^{\frac{n}{2}-2}}
+\|{\mathbb{T}}\|^\ell_{\dot{B}_{2,1}^{\frac{n}{2}}}
+\|(\u,\eta)\|^h_{\dot{B}_{p,1}^{\frac{n}{p}-1}}
+\|(a,{\mathbb{T}})\|^h_{\dot{B}_{p,1}^{\frac{n}{p}}})
\nonumber\\
&\quad+\bar{c}(\|(a,\u)\|^\ell_{\dot{B}_{2,1}^{\frac{n}{2}+1}}+\|(\eta,{\mathbb{T}})\|^\ell_{\dot{B}_{2,1}^{\frac{n}{2}}}
+\|a\|^h_{\dot{B}_{p,1}^{\frac{n}{p}}}+\|(\u,\eta)\|^h_{\dot{B}_{p,1}^{\frac{n}{p}+1}}
+\|{\mathbb{T}}\|^h_{\dot{B}_{p,1}^{\frac{n}{p}+2}})\le0.
\end{align}
In order to derive the decay estimate of the solutions given in Theorem \ref{dingli2}, we need  to get  a Lyapunov-type differential inequality from \eqref{sa3}.

 According to \eqref{sa2} and  embedding relation in the high frequency,
it's obvious  for any $\beta>0$ that
\begin{align}\label{sa52}
\|{\mathbb{T}}\|^h_{\dot{B}_{2,1}^{\frac n2}}\ge C \big(\|{\mathbb{T}}\|^h_{\dot{B}_{2,1}^{\frac n2}}\big)^{1+\beta},
\end{align}
and
\begin{align}\label{sa4}
\|a\|^h_{\dot{B}_{p,1}^{\frac{n}{p}}}+\|(\u,\eta)\|^h_{\dot{B}_{p,1}^{\frac{n}{p}+1}}
+\|{\mathbb{T}}\|^h_{\dot{B}_{p,1}^{\frac{n}{p}+2}}\ge C(\|(\u,\eta)\|^h_{\dot{B}_{p,1}^{\frac{n}{p}-1}}
+\|(a,{\mathbb{T}})\|^h_{\dot{B}_{p,1}^{\frac{n}{p}}})^{1+\beta}.
\end{align}
Thus, to get the Lyapunov-type inequality of the solutions, we only need  to control the norm of $\|(a,\u)\|^\ell_{\dot{B}_{2,1}^{\frac{n}{2}+1}}+\|(\eta,{\mathbb{T}})\|^\ell_{\dot{B}_{2,1}^{\frac{n}{2}}}$.
This process can be obtained from  the fact that   the  solutions constructed in Theorem \ref{dingli2} can
propagate the regularity of the
initial data in Besov space with low regularity, see the following Proposition \ref{propagate}. This will ensure that one can use interpolation to get the desired Lyapunov-type inequality.
\begin{proposition}\label{propagate}
Let $(a,\u,\eta,{\mathbb{T}}) $ be the  solutions constructed in Theorem \ref{dingli2}. For any $\frac{n}{2}-\frac{2n}{p}\le\sigma<\frac{n}{2}-1,$ and $(a_0^{\ell},\u_0^{\ell})\in {\dot{B}^{\sigma}_{2,\infty}}(\R^n), \eta_0^{\ell}\in {\dot{B}^{\sigma-1}_{2,\infty}}(\R^n), {\mathbb{T}}_0^{\ell}\in{\dot{B}^{\sigma+1}_{2,\infty}}(\R^n),$
then there exists a constant $C_0>0$ depends on the norm of the initial data
such that for all $t\geq0$,
 \begin{eqnarray}\label{sa39-1}
\|(a,\u)(t,\cdot)\|^{\ell}_{\dot B^{\sigma}_{2,\infty}}+\|\eta(t,\cdot)\|^{\ell}_{\dot B^{\sigma-1}_{2,\infty}}+\|{\mathbb{T}}(t,\cdot)\|^{\ell}_{\dot B^{\sigma+1}_{2,\infty}}\leq C_0.
\end{eqnarray}
\end{proposition}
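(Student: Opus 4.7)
The plan is to establish the bound \eqref{sa39-1} by running a linearized a priori estimate in the weaker space $\dot B^\sigma_{2,\infty}$ for the low-frequency parts of each unknown, then closing the nonlinear terms using the already-controlled norms from Theorem \ref{dingli2}. The choice $r=\infty$ is essential: it loses a logarithm in the standard product laws but permits $\sigma$ to fall below the critical index $n/2-1$, which is where the heat flow naturally propagates data.

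First, I would apply the low-frequency cutoff $(\cdot)^\ell$ to \eqref{m} and view the system as before: the equations for $\q\u$ and $a$ form a barotropic linearized system with source, while $\p\u$, $\eta$, and ${\mathbb T}$ satisfy heat (or damped heat) equations with source. For each linear block I apply the standard $L^2$-type Besov parabolic estimate (the Besov analogue of Lemma \ref{heat}, together with Proposition 10.23 of \cite{bcd} for the compressible block) \emph{in the $\dot B^\sigma_{2,\infty}$ scale}, obtaining
\begin{align*}
&\|(a,\u)\|^\ell_{\widetilde L^\infty_t(\dot B^\sigma_{2,\infty})} + \|(a,\u)\|^\ell_{\widetilde L^1_t(\dot B^{\sigma+2}_{2,\infty})} \\
&\qquad + \|\eta\|^\ell_{\widetilde L^\infty_t(\dot B^{\sigma-1}_{2,\infty})} + \|\eta\|^\ell_{\widetilde L^1_t(\dot B^{\sigma+1}_{2,\infty})} \\
&\qquad + \|{\mathbb T}\|^\ell_{\widetilde L^\infty_t(\dot B^{\sigma+1}_{2,\infty})} + \|{\mathbb T}\|^\ell_{\widetilde L^1_t(\dot B^{\sigma+1}_{2,\infty}) \cap \widetilde L^1_t(\dot B^{\sigma+3}_{2,\infty})} \\
&\lesssim \|(a_0,\u_0)^\ell\|_{\dot B^\sigma_{2,\infty}} + \|\eta_0^\ell\|_{\dot B^{\sigma-1}_{2,\infty}} + \|{\mathbb T}_0^\ell\|_{\dot B^{\sigma+1}_{2,\infty}} + \mathcal R(t),
\end{align*}
where $\mathcal R(t)$ collects the low-frequency norms of all nonlinear contributions (the terms $\div(a\u)$, $\u\cdot\nabla\u$, $k(a)\nabla a$, $I(a)(\mu\Delta\u+(\lambda+\mu)\nabla\div\u)$, $I(a)(\div{\mathbb T}-\nabla\eta)$, $\zeta(1-I(a))\eta\nabla\eta$, $\div(\eta\u)$, $F({\mathbb T},\u)$, $\kappa A_0\eta\mathbb{Id}/(2\lambda_1)$, $\u\cdot\nabla{\mathbb T}$) measured in appropriate $\dot B^{\sigma-1}_{2,\infty}$, $\dot B^{\sigma-2}_{2,\infty}$, $\dot B^{\sigma+1}_{2,\infty}$ scales.

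The core step is to bound $\mathcal R(t)$ by the quantity to be controlled times a time-integrable quantity coming from Theorem \ref{dingli2}. Every nonlinear term is bilinear (or higher) with at least one factor already known to belong to $L^1_t$ of a critical Besov space thanks to \eqref{xiaonorm}. Using Bony's decomposition \eqref{bony} together with Lemma \ref{fangji} and Lemma \ref{daishu}, I would prove estimates of the schematic form
\begin{equation*}
\|fg\|^{\ell}_{\dot B^\sigma_{2,\infty}} \lesssim \bigl(\|f\|^\ell_{\dot B^{n/2-1}_{2,1}} + \|f^h\|_{\dot B^{n/p-1}_{p,1}}\bigr)\|g\|^\ell_{\dot B^{\sigma+1}_{2,\infty}} + \text{(symmetric)} + \text{high-high remainder},
\end{equation*}
together with variants in the $\dot B^{\sigma-1}_{2,\infty}$ and $\dot B^{\sigma+1}_{2,\infty}$ scales; the high-high remainder is handled by the assumption $\sigma\ge n/2-2n/p$, which is exactly the threshold making $\dot R(f,g)\in\dot B^{\sigma}_{2,\infty}$ when $f,g$ have regularities of the types appearing in $X(t)$. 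For the quadratic-in-$\eta$ term I would additionally use Lemma \ref{fuhe} to handle $I(a)$ and the composition $k(a)$, and exploit that $\eta$ (having regularity shifted down by one) is controlled in $L^1_t(\dot B^{\sigma+1}_{2,\infty})$ by the parabolic estimate itself.

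Substituting these product bounds and using \eqref{xiaonorm} from Theorem \ref{dingli2}, I obtain
\begin{equation*}
Y(t) \le Y(0) + C\int_0^t \mathcal M(s)\, Y(s)\, ds,
\end{equation*}
where $Y(t)$ denotes the left-hand side norms above and $\mathcal M(s)$ is an $L^1_t$ function assembled from the norms entering $X(t)$. A direct Gr\"onwall argument then yields $Y(t)\le Y(0)\exp(C\|X\|)\le C_0$, which is \eqref{sa39-1}. The main obstacle I anticipate is verifying that every product appearing in $\mathcal R$ can be closed with the negative shift $\sigma<n/2-1$ while keeping one factor in a time-integrable critical norm; this is delicate for the low-low paraproducts where $\sigma$ is smallest (forcing the use of the weak $\ell^\infty$ summation) and for the $\eta\nabla\eta$ term where the only available gain comes from the heat smoothing of $\eta$ at level $\sigma+1$, which is precisely why the hypothesis $\sigma\ge n/2-2n/p$ cannot be relaxed.
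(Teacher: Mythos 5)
Your proposal is correct and mirrors the paper's argument: the paper likewise applies the low-frequency parabolic/barotropic estimates in the weaker $\dot B^{\cdot}_{2,\infty}$ scale (giving \eqref{sa9}--\eqref{sa11+11}), bounds the nonlinear remainders with the bespoke $L^2$--$L^p$ product laws \eqref{key0}--\eqref{key} (valid precisely for $\frac n2-\frac{2n}{p}\le\sigma<\frac n2-1$, which is the threshold you correctly identify for the high-high remainder), and closes via Gr\"onwall against the $L^1_t$-integrable quantity $\mathcal{E}_1$ furnished by Theorem \ref{dingli2}. The only small imprecision is that your Gr\"onwall inequality omits the additive source term $\int_0^t(1+\mathcal E_\infty)\mathcal E_\infty\mathcal E_1\,ds$ present in \eqref{sa36}, but this is uniformly bounded by \eqref{xiaonorm} and simply enlarges $C_0$, so the conclusion is unaffected.
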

\begin{proof}
To  simplify the process of proof,  we  first define the nonlinear terms in \eqref{m} as
 \begin{align*}
 f_1\stackrel{\mathrm{def}}{=}&-\div(a\u),\quad f_2\stackrel{\mathrm{def}}{=}-\div(\eta \u),\nonumber\\
 f_3\stackrel{\mathrm{def}}{=}&-(\u\cdot\nabla){\mathbb{T}}+F({\mathbb{T}}, \u),\quad f_4\stackrel{\mathrm{def}}{=}-(\u\cdot\nabla) \u +G(a,\eta,\u,{\mathbb{T}}).
\end{align*}

From the first   and fourth equations of \eqref{m}, we get by a similar derivation of Lemma 5.1 in \cite{xujiang2019arxiv} that
\begin{align}\label{sa9}
\|(a,\u)\|^{\ell}_{\dot{B}^{\sigma}_{2,\infty}}
\lesssim\|(a_0,\u_0)\|^{\ell}_{\dot{B}^{\sigma}_{2,\infty}}
+\|(\eta,{\mathbb{T}})\|^{\ell}_{\widetilde{L}^1_t(\dot{B}^{\sigma+1}_{2,\infty})}
+\int_0^t\|(f_1,f_4)\|^{\ell}_{\dot{B}^{\sigma}_{2,\infty}}\,ds.
\end{align}
From the second and  third equations of \eqref{m}, we get by using Lemma \ref{heat} in the low frequency that
\begin{align}\label{sa11}
\|\eta\|^{\ell}_{\dot{B}^{\sigma-1}_{2,\infty}}+\|\eta\|^{\ell}_{\widetilde{L}^1_t(\dot{B}^{\sigma+1}_{2,\infty})}
\lesssim\|\eta_0\|^{\ell}_{\dot{B}^{\sigma-1}_{2,\infty}}+\int_0^t\|f_2\|^{\ell}_{\dot{B}^{\sigma-1}_{2,\infty}}\,ds,
\end{align}
and
\begin{align}\label{qi14}
\|{\mathbb{T}}\|^{\ell}_{\dot{B}^{\sigma+1}_{2,\infty}}+\|{\mathbb{T}}\|^{\ell}_{\widetilde{L}^1_t(\dot{B}^{\sigma+1}_{2,\infty})}
\lesssim&\|{\mathbb{T}}_0\|^{\ell}_{\dot{B}^{\sigma+1}_{2,\infty}}+\|\eta\|^{\ell}_{\widetilde{L}^1_t(\dot{B}^{\sigma+1}_{2,\infty})}
+\int_0^t\|f_3\|^{\ell}_{\dot{B}^{\sigma+1}_{2,\infty}}\,ds.
\end{align}
Multiplying by a suitable large constant on both sides of \eqref{sa11} and then pulsing \eqref{qi14}, we can  finally get by combining  the resulting inequality with \eqref{sa9} that
\begin{align}\label{sa11+11}
&\|(a,\u) \|^{\ell}_{\dot{B}^{\sigma}_{2,\infty}}+\|\eta\|^{\ell}_{\dot{B}^{\sigma-1}_{2,\infty}}
+\|{\mathbb{T}}\|^{\ell}_{\dot{B}^{\sigma+1}_{2,\infty}}\nonumber\\
&\quad\lesssim\|(a_0,\u_0)\|^{\ell}_{\dot{B}^{\sigma}_{2,\infty}}+\|\eta_0\|^{\ell}_{\dot{B}^{\sigma-1}_{2,\infty}}+\|{\mathbb{T}}_0\|^{\ell}_{\dot{B}^{\sigma+1}_{2,\infty}}
\nonumber\\&\quad\quad+\int_0^t\|(f_1,f_4)\|^{\ell}_{\dot{B}^{\sigma}_{2,\infty}}\,ds+\int_0^t\|f_2\|^{\ell}_{\dot{B}^{\sigma-1}_{2,\infty}}\,ds
+\int_0^t\|f_3\|^{\ell}_{\dot{B}^{\sigma+1}_{2,\infty}}\,ds.
\end{align}

To estimate the nonlinear terms in $f_1, f_2, f_3, f_4$, we claim the following important
estimates, which we shall postpone its proof in the Appendix.

\noindent { $\mathrm{\mathbf{Claim:}}$}
Let
 $n= 2, 3$ {and} $ 2\leq p \leq \min(4,{2n}/({n-2}))$ additionally, $  p\not=4\ \hbox{ if }\ n=2$, we have
\begin{align}
&\|fg\|_{\dot{B}_{2,\infty}^{\sigma}}\!\!\lesssim \|f\|_{\dot{B}_{2,\infty}^{\sigma}}\|g\|_{\dot{B}_{p,1}^{\frac np}},\quad -\frac{n}{p}\le \sigma<\frac{n}{p}.\label{key0}\\
&\|f g^\ell\|_{\dot B^{\sigma}_{2,\infty}}^\ell\!\! \lesssim\|f\|_{\dot B^{\frac np-1}_{p,1}}\|g^\ell\|_{\dot B^{\sigma+1}_{2,\infty}},\quad \frac{n}{2}-\frac{2n}{p}\le\sigma<\frac{n}{2}-1.
\label{key1}\\
&\|f g^h\|_{\dot B^{\sigma}_{2,\infty}}^\ell \!\!\lesssim\|f\|_{\dot B^{\frac np-1}_{p,1}}\|g^h\|_{\dot B^{\frac np}_{p,1}},\quad \frac{n}{2}-\frac{2n}{p}\le\sigma<\frac{n}{2}-1.\label{key}
\end{align}
To accelerate the proof, we continue to introduce the following notation:
\begin{align*}%\label{sa7}
&\mathcal{E}_\infty(t)\stackrel{\mathrm{def}}{=}\|(a,\u)\|^\ell_{\dot{B}_{2,1}^{\frac{n}{2}-1}}
+\|\eta\|^\ell_{\dot{B}_{2,1}^{\frac{n}{2}-2}}
+\|{\mathbb{T}}\|^\ell_{\dot{B}_{2,1}^{\frac{n}{2}}}
+\|(\u,\eta)\|^h_{\dot{B}_{p,1}^{\frac{n}{p}-1}}
+\|(a,{\mathbb{T}})\|^h_{\dot{B}_{p,1}^{\frac{n}{p}}},
\nonumber\\
&\mathcal{E}_1(t)\stackrel{\mathrm{def}}{=}\|(a,\u)\|^\ell_{\dot{B}_{2,1}^{\frac{n}{2}+1}}+\|(\eta,{\mathbb{T}})\|^\ell_{\dot{B}_{2,1}^{\frac{n}{2}}}
+\|a\|^h_{\dot{B}_{p,1}^{\frac{n}{p}}}+\|(\u,\eta)\|^h_{\dot{B}_{p,1}^{\frac{n}{p}+1}}
+\|{\mathbb{T}}\|^h_{\dot{B}_{p,1}^{\frac{n}{p}+2}}.
\end{align*}
From \eqref{key0}, one has
\begin{align}\label{sa15}
&\|\u\!\cdot\!\nabla a^{\ell}\|^{\ell}_{\dot{B}^{\sigma}_{2,\infty}}\!+\!\|a\,\mathrm{div}\,\u^{\ell}\|^{\ell}_{\dot{B}^{\sigma}_{2,\infty}}
\nonumber\\&\quad
%\lesssim\|u^\ell\!\cdot\!\nabla a^{\ell}\|^{\ell}_{\dot{B}^{\sigma}_{2,\infty}}\!+\!\|\u^h\!\cdot\!\nabla %a^{\ell}\|^{\ell}_{\dot{B}^{\sigma}_{2,\infty}}\!+\!\|a^{\ell}\,\mathrm{div}\,\u^{\ell}\|^{\ell}_{\dot{B}^{\sigma}_{2,\infty}}
%\!+\!\|a^{h}\,\mathrm{div}\,\u^{\ell}\|^{\ell}_{\dot{B}^{\sigma}_{2,\infty}}\nonumber\\
%&\quad
\lesssim\|\u^\ell\|_{\dot{B}^\sigma_{2,\infty}} \|\nabla a^\ell\|_{\dot{B}^\frac{n}{p}_{p,1}}
\!+\!\|\u\|^h_{\dot B^\frac{n}{p}_{p,1}}\|\nabla a^{\ell}\|_{\dot B^\sigma_{2,\infty}}
\!+\!\|a^\ell\|_{\dot B^\sigma_{2,\infty}}\|\mathrm{div}\,\u^\ell\|_{\dot B^\frac{n}{p}_{p,1}}
\!+\!\|a^h\|_{\dot B^\frac{n}{p}_{p,1}}\|\mathrm{div}\,\u^{\ell}\|_{\dot B^\sigma_{2,\infty}}\nonumber\\
&\quad\lesssim\|a^{\ell}\|_{\dot{B}^{\frac n2+1}_{2,1}}\|\u^{\ell}\|_{\dot{B}^{\sigma}_{2,\infty}}\!+\!\|\u\|^{h}_{\dot{B}^{\frac {n}{p}\!+\!1}_{p,1}}\|a^{\ell}\|_{\dot{B}^{\sigma}_{2,\infty}}\!+\!\|\u\|^{\ell}_{\dot{B}^{\frac n2+1}_{2,1}}\|a\|^{\ell}_{\dot{B}^{\sigma}_{2,\infty}}\!+\!\|a\|^{h}_{\dot{B}^{\frac {n}{p}}_{p,1}}\|\u\|^{\ell}_{\dot{B}^{\sigma}_{2,\infty}}\nonumber\\
&\quad\lesssim\er\|(a^{\ell},\u^{\ell})\|_{\dot{B}^{\sigma}_{2,\infty}}.
\end{align}
Using the second estimate in  \eqref{key1}, we have
\begin{align}\label{sa16}
\|\u\cdot&\nabla a^h\|_{\dot B^{\sigma}_{2,\infty}}^\ell\!+\!\|a\mathrm{div}\u^{h}\|_{\dot B^{\sigma}_{2,\infty}}^\ell
\nonumber\\
\lesssim& \|\u\|^{\ell}_{\dot B^{\sigma+1}_{2,\infty}}\|\nabla a\|^h_{\dot B^{\frac {n}{p}-1}_{p,1}}+\!\|\nabla a\|^h_{\dot B^{\frac {n}{p}-1}_{p,1}}\|\u\|^h_{\dot B^{\frac {n}{p}}_{p,1}}
 \!+\!(\|a\|^\ell_{\dot B^{\frac {n}{p}-1}_{p,1}}\!+\!\|a\|^h_{\dot B^{\frac {n}{p}-1}_{p,1}})\|\mathrm{div}\u\|^h_{\dot B^{\frac {n}{p}}_{p,1}}\nonumber
\\
%\lesssim& (\|\u\|^{\ell}_{\dot B^{\frac n2-1}_{2,1}}\!+\!\|\u\|^h_{\dot B^{\frac {n}{p}-1}_{p,1}})\|a\|^h_{\dot B^{\frac {n}{p}}_{p,1}}
 %\!+\! (\|a\|^{\ell}_{\dot B^{\frac n2-1}_{2,1}}\!+\!\|a\|^h_{\dot B^{\frac {n}{p}-1}_{p,1}})\|\u\|^h_{\dot B^{\frac {n}{p}}_{p,1}}\nonumber\\
\lesssim& \|a\|^h_{\dot B^{\frac {n}{p}}_{p,1}}\|\u\|^{\ell}_{\dot B^{\sigma}_{2,\infty}}\!+\!(\|a\|^{\ell}_{\dot B^{\frac n2-1}_{2,1}}+\|a\|^h_{\dot B^{\frac {n}{p}}_{p,1}})\|\u\|^h_{\dot B^{\frac {n}{p}+1}_{p,1}}\lesssim\er\|\u^{\ell}\|_{\dot{B}^{\sigma}_{2,\infty}}+\e\er
\end{align}
from which  and \eqref{sa15}
% with the identity
%$
%\div (a \u)=\u\cdot\nabla a^\ell+a\mathrm{div}\u^{\ell}+\u\cdot\nabla a^h+a\mathrm{div}\u^{h},
%$
gives
\begin{align}\label{sa18}
\|f_1\|^{\ell}_{\dot{B}^{\sigma}_{2,\infty}}
\lesssim\er\|(a^{\ell},\u^{\ell})\|_{\dot{B}^{\sigma}_{2,\infty}}+\e\er.
\end{align}
Along the same lines, we have
\begin{align}\label{sa12}
\|f_2\|^{\ell}_{\dot{B}^{\sigma-1}_{2,\infty}}&\lesssim\|\div(\eta \u)\|^{\ell}_{\dot{B}^{\sigma-1}_{2,\infty}}
%\lesssim\|\eta \u\|^{\ell}_{\dot{B}^{\sigma}_{2,\infty}}\nonumber\\
%&
\lesssim\|\eta^{\ell} \u^{\ell}\|^{\ell}_{\dot{B}^{\sigma}_{2,\infty}}+\|\eta^{h} \u^{\ell}\|^{\ell}_{\dot{B}^{\sigma}_{2,\infty}}+\|\eta^{\ell} \u^h\|^{\ell}_{\dot{B}^{\sigma}_{2,\infty}}+\|\eta^{h} \u^{h}\|^{\ell}_{\dot{B}^{\sigma}_{2,\infty}}\nonumber\\
&\lesssim\| \u^{\ell}\|_{\dot{B}^{\sigma}_{2,\infty}}(\|\eta^{\ell}\|_{\dot B^{\frac n2}_{2,1}}+\|\eta^{h}\|_{\dot B^{\frac np+1}_{p,1}})+\| \eta^{\ell}\|_{\dot{B}^{\sigma-1}_{2,\infty}}\|\u^{h}\|_{\dot B^{\frac np+1}_{p,1}}+\|\eta^{h}\|_{\dot B^{\frac np-1}_{p,1}}\|\u^{h}\|_{\dot B^{\frac np+1}_{p,1}}\nonumber\\
&\lesssim
\er(\|\u \|^{\ell}_{\dot{B}^{\sigma}_{2,\infty}}+\|\eta\|^{\ell}_{\dot{B}^{\sigma-1}_{2,\infty}})+\e\er.
\end{align}
To bound the terms in $f_3$, we use the decomposition $\u=\u^\ell+\u^h$, {and}$ {\mathbb{T}}={\mathbb{T}}^\ell+{\mathbb{T}}^h$ to get
\begin{align}\label{sa19}
&\|\u\cdot\nabla {\mathbb{T}}^{\ell}\|^{\ell}_{\dot{B}^{\sigma+1}_{2,\infty}}+\|{\mathbb{T}}\,\mathrm{div}\,\u^{\ell}\|^{\ell}_{\dot{B}^{\sigma+1}_{2,\infty}}
+\|\u^{\ell}\cdot\nabla {\mathbb{T}}^h\|_{\dot B^{\sigma+1}_{2,\infty}}^\ell+\|{\mathbb{T}}^{\ell}\mathrm{div}\u^{h}\|_{\dot B^{\sigma+1}_{2,\infty}}^\ell\nonumber\\&\quad
\lesssim\|\u^\ell\|_{\dot{B}^\sigma_{2,\infty}} \|\nabla {\mathbb{T}}^\ell\|_{\dot{B}^\frac{n}{p}_{p,1}}
+\|\u\|^h_{\dot B^\frac{n}{p}_{p,1}}\|\nabla {\mathbb{T}}^{\ell}\|_{\dot B^{\sigma}_{2,\infty}}
+\|{\mathbb{T}}\|_{\dot B^\frac{n}{p}_{p,1}}\|\mathrm{div}\,\u^{\ell}\|_{\dot B^\sigma_{2,\infty}}\nonumber\\
&\quad\quad+\|\u^\ell\|_{\dot{B}^\sigma_{2,\infty}} \|\nabla {\mathbb{T}}^h\|_{\dot{B}^\frac{n}{p}_{p,1}}+\|{\mathbb{T}}^\ell\|_{\dot{B}^{\sigma+1}_{2,\infty}} \| \div \u^h\|_{\dot{B}^{\frac{n}{p}-1}_{p,1}}\nonumber\\
&\quad\lesssim(\|{\mathbb{T}}^{\ell}\|_{\dot{B}^{\frac n2}_{2,1}}+\|{\mathbb{T}}\|^{h}_{\dot{B}^{\frac {n}{p}+2}_{p,1}})\|\u^{\ell}\|_{\dot{B}^{\sigma}_{2,\infty}}+(\|\u\|^{\ell}_{\dot{B}^{\frac n2+1}_{2,1}}+\|\u\|^{h}_{\dot{B}^{\frac {n}{p}+1}_{p,1}})\|{\mathbb{T}}\|^{\ell}_{\dot{B}^{\sigma+1}_{2,\infty}}\nonumber\\
&\quad\lesssim\er(\|\u^{\ell}\|_{\dot{B}^{\sigma}_{2,\infty}}+\|{\mathbb{T}}\|^{\ell}_{\dot{B}^{\sigma+1}_{2,\infty}}).
\end{align}
It follows from \eqref{key} that
\begin{align}\label{sa20}
\|\u^h\cdot\nabla {\mathbb{T}}^h\|_{\dot B^{\sigma+1}_{2,\infty}}^\ell+\|{\mathbb{T}}^h\mathrm{div}\u^{h}\|_{\dot B^{\sigma+1}_{2,\infty}}^\ell
\lesssim&\|\u^h\cdot\nabla {\mathbb{T}}^h\|_{\dot B^{\sigma}_{2,\infty}}^\ell+\|{\mathbb{T}}^h\mathrm{div}\u^{h}\|_{\dot B^{\sigma}_{2,\infty}}^\ell
\nonumber\\
\lesssim& \|\u\|^h_{\dot B^{\frac {n}{p}-1}_{p,1}}\|\nabla {\mathbb{T}}\|^h_{\dot B^{\frac {n}{p}}_{p,1}}
 +\|{\mathbb{T}}\|^h_{\dot B^{\frac {n}{p}-1}_{p,1}}\|\mathrm{div}\u\|^h_{\dot B^{\frac {n}{p}}_{p,1}}\nonumber
\\
\lesssim& \|\u\|^h_{\dot B^{\frac {n}{p}-1}_{p,1}}\|{\mathbb{T}}\|^h_{\dot B^{\frac {n}{p}+2}_{p,1}}+\|{\mathbb{T}}\|^h_{\dot B^{\frac {n}{p}}_{p,1}}\|\u\|^h_{\dot B^{\frac {n}{p}+1}_{p,1}}\nonumber\\
\lesssim&\e\er.
\end{align}
The term of $\|F({\mathbb{T}}, \u)\|^{\ell}_{\dot{B}^{\sigma+1}_{2,\infty}}$ can be bounded the same as \eqref{sa19}, \eqref{sa20}, thus we can get
\begin{align}\label{sa21}
\|f_3\|^{\ell}_{\dot{B}^{\sigma+1}_{2,\infty}}
\lesssim\er(\|\u^{\ell}\|_{\dot{B}^{\sigma}_{2,\infty}}+\|{\mathbb{T}}\|^{\ell}_{\dot{B}^{\sigma+1}_{2,\infty}})+\e\er.
\end{align}
Finally, we have to bound the terms in $f_4$. Indeed, the terms $\u\cdot\nabla \u,$ $k(a)\nabla a$, $I(a)(\Delta \u+\nabla\div \u)$, $\eta\nabla\eta$ in $f_4$ can be dealt with the same as \eqref{sa15}, \eqref{sa16}, \eqref{sa12}, \eqref{sa19}, here we omit the details. We only present  some  representative terms in the following.
According to the definition of $I(a)$, it's not hard to check that $$I(a)=a-aI(a).$$
Now, using \eqref{key1}, \eqref{key} and Lemma \ref{fuhe}, we get
\begin{align}\label{sa23}
\|aI(a)\div{\mathbb{T}} \|^{\ell}_{\dot{B}^{\sigma}_{2,\infty}}
\lesssim&\|a^\ell I(a)\div{\mathbb{T}} \|^{\ell}_{\dot{B}^{\sigma}_{2,\infty}}+\|a^h I(a)\div{\mathbb{T}} \|^{\ell}_{\dot{B}^{\sigma}_{2,\infty}}\nonumber\\
\lesssim&\|I(a)\div{\mathbb{T}}\|_{\dot{B}^{\frac np}_{p,1}} \|a \|^{\ell}_{\dot{B}^{\sigma}_{2,\infty}}+\|a^h\|_{\dot{B}^{\frac np}_{p,1}}\|I(a)\div{\mathbb{T}} \|_{\dot{B}^{\frac np-1}_{p,1}}\nonumber\\
\lesssim&(\|a^\ell\|_{\dot{B}^{\frac n2-1}_{2,1}} +\|a^h\|_{\dot{B}^{\frac np}_{p,1}} )(\|{\mathbb{T}}^\ell\|_{\dot{B}^{\frac n2}_{2,1}} +\|{\mathbb{T}}^h\|_{\dot{B}^{\frac np+2}_{p,1}} )\|a \|^{\ell}_{\dot{B}^{\sigma}_{2,\infty}}\nonumber\\
&+(\|a^\ell\|_{\dot{B}^{\frac n2-1}_{2,1}} +\|a^h\|_{\dot{B}^{\frac np}_{p,1}} )(\|{\mathbb{T}}^\ell\|_{\dot{B}^{\frac n2}_{2,1}} +\|{\mathbb{T}}^h\|_{\dot{B}^{\frac np}_{p,1}} )\|a^{h} \|_{\dot{B}^{\frac np}_{p,1}},
\end{align}
from which
\begin{align}\label{sa24}
\|I(a)\div{\mathbb{T}} \|^{\ell}_{\dot{B}^{\sigma}_{2,\infty}}
\lesssim&(1+\e)\er\|a \|^{\ell}_{\dot{B}^{\sigma}_{2,\infty}}+(1+\e)\e\er.
\end{align}
Similarly,
\begin{align}\label{sa26}
\|I(a)\nabla \eta\|^{\ell}_{\dot{B}^{\sigma}_{2,\infty}}
\lesssim&(1+\e)\er\|a \|^{\ell}_{\dot{B}^{\sigma}_{2,\infty}}+(1+\e)\e\er.
\end{align}
Thanks to our claim \eqref{key0}--\eqref{key} again, we have
\begin{align}\label{sa28}
\|I(a)\eta\nabla \eta \|^{\ell}_{\dot{B}^{\sigma}_{2,\infty}}
\lesssim&\|I(a)\|_{\dot{B}^{\frac np}_{p,1}}\|\eta\nabla \eta \|^{\ell}_{\dot{B}^{\sigma}_{2,\infty}}
\nonumber\\
\lesssim&(\|a^\ell\|_{\dot{B}^{\frac n2-1}_{2,1}} +\|a^h\|_{\dot{B}^{\frac np}_{p,1}} )(\| \eta^\ell\|_{\dot{B}^{\frac n2}_{2,1}} \|\eta\|^{\ell}_{\dot{B}^{\sigma-1}_{2,\infty}}+\|\eta^h\|_{\dot{B}^{\frac np-1}_{p,1}}\| \eta^h \|_{\dot{B}^{\frac np+1}_{p,1}})\nonumber\\
\lesssim&\e\er \|\eta\|^{\ell}_{\dot{B}^{\sigma-1}_{2,\infty}}+(\e)^2\er.
\end{align}
In all, collecting the above estimates, we obtain that
\begin{align}\label{sa33}
\|f_4\|^{\ell}_{\dot{B}^{\sigma}_{2,\infty}}
\lesssim&(1+\e)\er(\|(a,\u) \|^{\ell}_{\dot{B}^{\sigma}_{2,\infty}}+\|\eta\|^{\ell}_{\dot{B}^{\sigma-1}_{2,\infty}})+(1+\e)\e\er.
\end{align}

Inserting \eqref{sa18}, \eqref{sa20}, \eqref{sa21}, \eqref{sa33} into \eqref{sa11+11} gives
\begin{align}\label{sa36}
&\|(a,\u) \|^{\ell}_{\dot{B}^{\sigma}_{2,\infty}}+\|\eta\|^{\ell}_{\dot{B}^{\sigma-1}_{2,\infty}}
+\|{\mathbb{T}}\|^{\ell}_{\dot{B}^{\sigma+1}_{2,\infty}}\nonumber\\
&\quad\lesssim\|(a_0,\u_0)\|^{\ell}_{\dot{B}^{\sigma}_{2,\infty}}+\|\eta_0\|^{\ell}_{\dot{B}^{\sigma-1}_{2,\infty}}+\|{\mathbb{T}}_0\|^{\ell}_{\dot{B}^{\sigma+1}_{2,\infty}}
+\int_0^t(1+\mathcal{E}_\infty(s))\mathcal{E}_\infty(s)\mathcal{E}_1(s) \,ds\nonumber\\
&\quad\quad+\int_0^t
(1+\mathcal{E}_\infty(s))\mathcal{E}_1(s)(\|(a,\u) \|^{\ell}_{\dot{B}^{\sigma}_{2,\infty}}+\|\eta\|^{\ell}_{\dot{B}^{\sigma-1}_{2,\infty}}+\|{\mathbb{T}}\|^{\ell}_{\dot{B}^{\sigma+1}_{2,\infty}})\,ds.
\end{align}
Thus, one can employ nonlinear generalisations of the Gronwall's inequality to get
\begin{eqnarray}\label{sa39}
\|(a,\u)(t,\cdot)\|^{\ell}_{\dot B^{\sigma}_{2,\infty}}+\|\eta(t,\cdot)\|^{\ell}_{\dot B^{\sigma-1}_{2,\infty}}+\|{\mathbb{T}}(t,\cdot)\|^{\ell}_{\dot B^{\sigma+1}_{2,\infty}}\leq C_0
\end{eqnarray}
for all $t\geq0$, where $C_0>0$ depends on the norm of the initial data.

Consequently, we complete the proof of Proposition \ref{propagate}.
\end{proof}
Now, we are begin to
get   Lyapunov-type  inequality from \eqref{sa3}.
For any $\frac{n}{2}-\frac{2n}{p}\le\sigma<\frac{n}{2}-1,$
it follows from interpolation inequality that
\begin{align*}%\label{sa5}
\|(a,\u)\|^\ell_{\dot{B}_{2,1}^{\frac n2-1}}
\le& C \big(\|(a,\u)\|^\ell_{\dot{B}_{2,\infty}^{\sigma}}\big)^{\theta_{1}}\big(\|(a,\u)\|^\ell_{\dot{B}_{2,1}^{\frac n2+1}}\big)^{1-\theta_{1}}\nonumber\\
\le&C\big(\|(a,\u)\|_{\dot{B}_{2,\infty}^{\sigma}}\big)^{\theta_{1}}\big(\|(a,\u)\|^\ell_{\dot{B}_{2,1}^{\frac n2+1}}\big)^{1-\theta_{1}},
\quad \theta_1=\frac{4}{n-2\sigma+2}\in(0,1),
\end{align*}
this together with Proposition \ref{propagate}  implies that
\begin{align}\label{sa51}
\|(a,\u)\|^\ell_{\dot{B}_{2,1}^{\frac n2+1}}\ge  c_0\big(\|(a,\u)\|^\ell_{\dot{B}_{2,1}^{\frac n2-1}}\big)^{\frac{1}{1-\theta_{1}}}.
\end{align}
For any $ \sigma-1<\frac n2-2,$
\begin{align*}%\label{sa53}
\|\eta\|^\ell_{\dot{B}_{2,1}^{\frac n2-2}}
\le& C \big(\|\eta\|^\ell_{\dot{B}_{2,\infty}^{\sigma-1}}\big)^{\theta_{1}}\big(\|\eta\|^\ell_{\dot{B}_{2,1}^{\frac n2}}\big)^{1-\theta_{1}},
\quad \theta_1=\frac{4}{n-2\sigma+2}\in(0,1),
\end{align*}
which combines with \eqref{sa39} implies
\begin{align}\label{sa54}
\|\eta\|^\ell_{\dot{B}_{2,1}^{\frac n2}}\ge  c_0\big(\|\eta\|^\ell_{\dot{B}_{2,1}^{\frac n2-2}}\big)^{\frac{1}{1-\theta_{1}}}.
\end{align}
Similarly,
for any $ \sigma+1<\frac n2,$
we deduce from interpolation inequality again that
\begin{align*}%\label{sa56}
\|{\mathbb{T}}\|^\ell_{\dot{B}_{2,1}^{\frac n2}}
\le& C \big(\|{\mathbb{T}}\|^\ell_{\dot{B}_{2,\infty}^{\sigma+1}}\big)^{\theta_{1}}\big(\|{\mathbb{T}}\|^\ell_{\dot{B}_{2,1}^{\frac n2+2}}\big)^{1-\theta_{1}},
\end{align*}
from which and \eqref{sa39}, we get
\begin{align}\label{sa57}
\|{\mathbb{T}}\|^\ell_{\dot{B}_{2,1}^{\frac n2+2}}\ge  c_0\big(\|{\mathbb{T}}\|^\ell_{\dot{B}_{2,1}^{\frac n2}}\big)^{\frac{1}{1-\theta_{1}}},
\quad \theta_1=\frac{4}{n-2\sigma+2}\in(0,1).
\end{align}
Now, taking $\beta=1+\theta_{1}>0$ in
\eqref{sa52} and \eqref{sa4}, then combining with   \eqref{sa51}, \eqref{sa54}, \eqref{sa57}, we deduce from \eqref{sa3}
that
\begin{align}\label{sa58}
&\frac{d}{dt}(\|(a,\u)\|^\ell_{\dot{B}_{2,1}^{\frac{n}{2}-1}}
+\|\eta\|^\ell_{\dot{B}_{2,1}^{\frac{n}{2}-2}}
+\|{\mathbb{T}}\|^\ell_{\dot{B}_{2,1}^{\frac{n}{2}}}
+\|(\u,\eta)\|^h_{\dot{B}_{p,1}^{\frac{n}{p}-1}}
+\|(a,{\mathbb{T}})\|^h_{\dot{B}_{p,1}^{\frac{n}{p}}}),
\nonumber\\
&\quad+\widetilde{c}_0(\|(a,\u)\|^\ell_{\dot{B}_{2,1}^{\frac{n}{2}-1}}
+\|\eta\|^\ell_{\dot{B}_{2,1}^{\frac{n}{2}-2}}
+\|{\mathbb{T}}\|^\ell_{\dot{B}_{2,1}^{\frac{n}{2}}}
+\|(\u,\eta)\|^h_{\dot{B}_{p,1}^{\frac{n}{p}-1}}
+\|(a,{\mathbb{T}})\|^h_{\dot{B}_{p,1}^{\frac{n}{p}}})^{1+\frac{4}{n-2\sigma-2}}\le0.
\end{align}
Solving this differential inequality directly, we obtain
\begin{align}\label{sa59}
\|(a,\u)\|^\ell_{\dot{B}_{2,1}^{\frac{n}{2}-1}}
+\|\eta\|^\ell_{\dot{B}_{2,1}^{\frac{n}{2}-2}}
+\|{\mathbb{T}}\|^\ell_{\dot{B}_{2,1}^{\frac{n}{2}}}
+\|(\u,\eta)\|^h_{\dot{B}_{p,1}^{\frac{n}{p}-1}}
+\|(a,{\mathbb{T}})\|^h_{\dot{B}_{p,1}^{\frac{n}{p}}}
\le C(1+t)^{-\frac{n-2\sigma-2}{4}}.
\end{align}
For any $\frac np-\frac n2+\sigma<\gamma_1<\frac np-1,$ by the interpolation inequality we have
\begin{align*}
\|(a,\u)\|^\ell_{\dot{B}_{p,1}^{\gamma_1}}
\le& C\|(a,\u)\|^\ell_{\dot{B}_{2,1}^{\gamma_1+\frac n2-\frac np}}\\
\le&C\big(\|(a,\u)\|^\ell_{\dot{B}_{2,\infty}^{\sigma}}\big)^{\theta_{2}} \big(\|(a,\u)\|^\ell_{\dot{B}_{2,1}^{\frac n2-1}}\big)^{1-\theta_{2}},\quad \theta_{2}=\frac{\frac np -1-\gamma_1}{\frac n2-1-\sigma}\in (0,1),
\end{align*}
which combines  with  Proposition \ref{propagate} gives
\begin{align}\label{sa60}
\|(a,\u)\|^\ell_{\dot{B}_{p,1}^{\gamma_1}}
\le C(1+t)^{-\frac{(\frac n2-\sigma-1)(1-\theta_{2})}{2}}=C(1+t)^{-\frac{n}{2}(\frac 12-\frac 1p)-\frac{\gamma_1-\sigma}{2}}.
\end{align}
In  the light of
$\frac np-\frac n2+\sigma<\gamma_1<\frac np-1,$
 we see that
$$\|(a^h,\u^h)\|_{\dot{B}_{p,1}^{\gamma_1}}\le C(\|a\|^h_{\dot{B}_{p,1}^{\frac np}}+\|\u\|^h_{\dot{B}_{p,1}^{\frac np-1}})\le C(1+t)^{-\frac{n-2\sigma-2}{4}},
$$
from which and \eqref{sa60} gives
\begin{align*}
\|(a,\u)\|_{\dot{B}_{p,1}^{\gamma_1}}
\le&C(\|(a,\u)\|^\ell_{\dot{B}_{p,1}^{\gamma_1}}+\|(a,\u)\|^h_{\dot{B}_{p,1}^{\gamma_1}})\nonumber\\
\le& C(1+t)^{-\frac{n}{2}(\frac 12-\frac 1p)-\frac{\gamma_1-\sigma}{2}}+C(1+t)^{-\frac{d-2\sigma-2}{4}}\nonumber\\
\le& C(1+t)^{-\frac{n}{2}(\frac 12-\frac 1p)-\frac{\gamma_1-\sigma}{2}}.
\end{align*}

Hence,
thanks to the embedding relation
$\dot{B}^{0}_{p,1}(\R^n)\hookrightarrow L^p(\R^n)$, one infer that
\begin{align*}
\|\Lambda^{\gamma_1} (a,\u)\|_{L^p}
\le& C(1+t)^{-\frac{n}{2}(\frac 12-\frac 1p)-\frac{\gamma_1-\sigma}{2}}.
\end{align*}
For any $\frac np-\frac n2-1+\sigma<\gamma_2<\frac np-1,$  we have
\begin{align*}
\|\eta\|^\ell_{\dot{B}_{p,1}^{\gamma_2}}
\le& C\|\eta\|^\ell_{\dot{B}_{2,1}^{\gamma_2+\frac n2-\frac np}}
\le C\big(\|\eta\|^\ell_{\dot{B}_{2,\infty}^{\sigma-1}}\big)^{\theta_{3}} \big(\|\eta\|^\ell_{\dot{B}_{2,1}^{\frac n2-2}}\big)^{1-\theta_{3}},\quad \theta_{3}=\frac{\frac np -\gamma_2-2}{\frac n2-1-\sigma}\in (0,1),
\end{align*}
which  gives
\begin{align}\label{sa61}
\|\eta\|^\ell_{\dot{B}_{p,1}^{\gamma_2}}
\le C(1+t)^{-\frac{n}{2}(\frac 12-\frac 1p)-\frac{\gamma_2-\sigma+1}{2}}.
\end{align}
As
$\gamma_2<\frac np-1,$
 we see  in the high frequency part that
\begin{align}\label{sa61+61}
\|\eta\|^h_{\dot{B}_{p,1}^{\gamma_2}}\le C\|\eta\|^h_{\dot{B}_{p,1}^{\frac np-1}}\le C(1+t)^{-\frac{n-2\sigma-2}{4}}.
\end{align}
From \eqref{sa61} and \eqref{sa61+61}, we have
\begin{align*}
\|\eta\|_{\dot{B}_{p,1}^{\gamma_2}}
\le C(\|\eta\|^\ell_{\dot{B}_{p,1}^{\gamma_2}}+\|\eta\|^h_{\dot{B}_{p,1}^{\gamma_2}})
\le C(1+t)^{-\frac{n}{2}(\frac 12-\frac 1p)-\frac{\gamma_2-\sigma+1}{2}}
\end{align*}
which further implies for any $\frac np-\frac n2-1+\sigma<\gamma_2<\frac np-1,$  that
\begin{align*}
\|\Lambda^{\gamma_2}\eta\|_{L^p}
\le C(1+t)^{-\frac{n}{2}(\frac 12-\frac 1p)-\frac{\gamma_2-\sigma+1}{2}}.
\end{align*}

For any $\frac np-\frac n2+1+\sigma<\gamma_3<\frac np,$ by the interpolation inequality we have
\begin{align*}
\|{\mathbb{T}}\|^\ell_{\dot{B}_{p,1}^{\gamma_3}}
\le& C\|{\mathbb{T}}\|^\ell_{\dot{B}_{2,1}^{\gamma_3+\frac n2-\frac np}}
\le C\big(\|{\mathbb{T}}\|^\ell_{\dot{B}_{2,\infty}^{\sigma+1}}\big)^{\theta_{4}} \big(\|{\mathbb{T}}\|^\ell_{\dot{B}_{2,1}^{\frac n2}}\big)^{1-\theta_{4}},\quad \theta_{4}=\frac{\frac np -\gamma_3}{\frac n2-1-\sigma}\in (0,1),
\end{align*}
which  gives
\begin{align}\label{sa61+3}
\|{\mathbb{T}}\|^\ell_{\dot{B}_{p,1}^{\gamma_3}}
\le C(1+t)^{-\frac{(\frac n2-\sigma-1)(1-\theta_{4})}{2}}=C(1+t)^{-\frac{n}{2}(\frac 12-\frac 1p)-\frac{\gamma_3-\sigma-1}{2}}.
\end{align}
In  the light of
$\frac np-\frac n2+1+\sigma<\gamma_3<\frac np,$
 we see that
\begin{align}\label{sa61+61+3}
\|{\mathbb{T}}\|^h_{\dot{B}_{p,1}^{\gamma_3}}\le C\|{\mathbb{T}}\|^h_{\dot{B}_{p,1}^{\frac np}}\le C(1+t)^{-\frac{n-2\sigma-2}{4}}.
\end{align}
From \eqref{sa61+3} and \eqref{sa61+61+3}, we have
\begin{align*}
\|{\mathbb{T}}\|_{\dot{B}_{p,1}^{\gamma_3}}
\le C(\|{\mathbb{T}}\|^\ell_{\dot{B}_{p,1}^{\gamma_3}}+\|{\mathbb{T}}\|^h_{\dot{B}_{p,1}^{\gamma_3}})
\le C(1+t)^{-\frac{n}{2}(\frac 12-\frac 1p)-\frac{\gamma_3-\sigma-1}{2}},
\end{align*}
which further implies for any $\frac np-\frac n2+1+\sigma<\gamma_3<\frac np,$  that
\begin{align*}
\|\Lambda^{\gamma_3}{\mathbb{T}}\|_{L^p}
\le C(1+t)^{-\frac{n}{2}(\frac 12-\frac 1p)-\frac{\gamma_3-\sigma-1}{2}}.
\end{align*}
This complete the proof of the Theorem \ref{dingli3}. $\hspace{8.2cm} \square$

\vskip .3in
\section*{\bf Acknowledgments}
This work is  supported by  National Natural Science Foundation of China under grant numbers 12001377,11601533, 11971356,
National Natural Science Foundation key project of China under grant number 11831003,
 and Natural Science Foundation of Guangdong Province of China under grant numbers 2018A030313024, 2020B1515310008, and Project of Educational Commission of Guangdong Province of China under grant number 2019KZDZX1007.

\section*{Appendix: The proof of our claim \eqref{key0}--\eqref{key}}

\setcounter{theorem}{0}
\setcounter{equation}{0}
\renewcommand{\theremark}{A.\arabic{remark}}
\renewcommand{\theequation}{A.\arabic{equation}}
\renewcommand{\thetheorem}{A.\arabic{theorem}}

\begin{proof}
We first use  Bony's decomposition to rewrite
\begin{align}\label{mingzai}
fg=\dot{T}_fg+\dot{T}_gf+\dot{R}(f,g).
\end{align}
Due to  H\"older's inequality, Bernstein's inequality and the embedding $\dot{B}_{p,1}^{\frac{n}{p}}(\R^n)\hookrightarrow L^\infty(\R^n)$, we can get
\begin{align}\label{ty1}
\|\dot{\Delta}_j(\dot{T}_fg)\|_{L^2}&\lesssim\sum_{|k-j|\leq 4}\|\dot{\Delta}_j(\dot{S}_{k-1}f\dot{\Delta}_kg)\|_{L^2}
\nonumber\\
&\lesssim\sum_{|k-j|\leq 4}\|\dot{S}_{k-1}f\|_{L^\infty}\|\dot{\Delta}_kg\|_{L^2}\nonumber\\
&\lesssim\sum_{|k-j|\leq 4}\|f\|_{L^\infty}\|\dot{\Delta}_kg\|_{L^2}\nonumber\\
&\lesssim 2^{-\sigma j}\|f\|_{\dot{B}_{p,1}^{\frac{n}{p}}}\|g\|_{\dot{B}_{2,\infty}^{\sigma}}.
\end{align}
Similarly,  for any $\sigma<\frac np$, the second term in  \eqref{mingzai} can be estimated  as follows
\begin{align}\label{ty2}
\|\dot{\Delta}_j(\dot{T}_gf)\|_{L^2}
&\lesssim\sum_{|k-j|\leq 4}\sum_{k^\prime\leq k-2}\|\dot{\Delta}_{k^\prime}g\|_{L^{2p/(p-2)}}\|\dot{\Delta}_kf\|_{L^p}\nonumber\\
&\lesssim\sum_{|k-j|\leq 4}(\sum_{k^\prime\leq k-2}2^{k^\prime(\frac{n}{p}-\sigma)}2^{k^\prime\sigma}
\|\dot{\Delta}_{k^\prime}g\|_{L^2})\|\dot{\Delta}_kf\|_{L^p}\nonumber\\
&\lesssim 2^{-\sigma j}\|f\|_{\dot{B}_{p,1}^{\frac{n}{p}}}\|g\|_{\dot{B}_{2,\infty}^{\sigma}}.
\end{align}
In view of the fact that $\sigma\ge-\frac{n}{p}$, we can deal with the last term in \eqref{mingzai}
\begin{align}\label{ty3}
\|\dot{\Delta}_j\dot{R}(f,g)\|_{L^2}&\lesssim \sum_{k\geq j-3}\sum_{|k-k^\prime|\leq 1}
\|\dot{\Delta}_j(\dot{\Delta}_{k}f\dot{\Delta}_{k^\prime}g)\|_{L^2}\nonumber\\
&\lesssim 2^{\frac{n}{p}j}\sum_{k\geq j-3}\sum_{|k-k^\prime|\leq 1}\|\dot{\Delta}_{k}f\dot{\Delta}_{k^\prime}g\|_{L^{\frac{2p}{p+2}}}\nonumber\\
&\lesssim 2^{\frac{n}{p}j}\sum_{k\geq j-3}\sum_{|k-k^\prime|\leq 1}2^{-k\frac{n}{p}}2^{k\frac{n}{p}}\|\dot{\Delta}_{k}f\|_{L^{p}}2^{-k^\prime\sigma}
2^{k^\prime\sigma}\|\dot{\Delta}_{k^\prime}g\|_{L^{2}}\nonumber\\
&\lesssim2^{\frac{n}{p}j}\sum_{k\geq j-3}2^{-k(\sigma+\frac{n}{p})}d_k\|f\|_{\dot{B}_{p,1}^{\frac{n}{p}}}
\|g\|_{\dot{B}_{2,\infty}^{\sigma}}\nonumber\\
&\lesssim2^{-\sigma j}\|f\|_{\dot{B}_{p,1}^{\frac{n}{p}}}\|g\|_{\dot{B}_{2,\infty}^{\sigma}}.
\end{align}
The combination of  \eqref{ty1}--\eqref{ty3} gives  \eqref{key0}.

Now, we are in a position to prove \eqref{key1}. We also use Bony's decomposition to write
$
fg^\ell=\dot{T}_f{g^\ell}+\dot{T}_{g^\ell} f+\dot{R}(f,g^\ell).
$
It follows from the H\"older inequality and the  Bernstein inequality
 to get
\begin{align}\label{ty5}
\|\dot{\Delta}_j(\dot{T}_fg^\ell)\|_{L^2}
&\lesssim\sum_{|k-j|\leq 4}\sum_{k^\prime\leq k-2}\|\dot{\Delta}_{k^\prime}f\|_{L^{\infty}}\|\dot{\Delta}_kg^\ell\|_{L^2}\nonumber\\
&\lesssim\sum_{|k-j|\leq 4}\sum_{k^\prime\leq k-2}2^{\frac{nk^\prime}{p}}
\|\dot{\Delta}_{k^\prime}f\|_{L^p}\|\dot{\Delta}_kg^\ell\|_{L^2}\nonumber\\
&\lesssim\sum_{|k-j|\leq 4}(\sum_{k^\prime\leq k-2}2^{k^\prime}2^{k^\prime(\frac{n}{p}-1)}
\|\dot{\Delta}_{k^\prime}f\|_{L^p})
2^{-k(\sigma+1)}2^{k(\sigma+1)}
\|\dot{\Delta}_kg^\ell\|_{L^2}\nonumber\\
&\lesssim 2^{- \sigma j}\|f\|_{\dot{B}_{p,1}^{\frac{n}{p}-1}}\|g^\ell\|_{\dot{B}_{2,\infty}^{\sigma+1}}.
\end{align}
Along   the same lines, for any $\sigma<\frac n2-1$, one has
\begin{align}\label{ty6}
\|\dot{\Delta}_j(\dot{T}_{g^\ell}f)\|_{L^2}&\leq \sum_{|k-j|\leq 4}\sum_{k^\prime\leq k-2}\|\dot{\Delta}_j(\dot{\Delta}_{k^\prime}{g^\ell}\dot{\Delta}_kf)\|_{L^2}\nonumber\\
&\leq\sum_{|k-j|\leq 4}\sum_{k^\prime\leq k-2}\|\dot{\Delta}_{k^\prime}{g^\ell}\|_{L^{{\frac{2p}{p-2}}}}\|\dot{\Delta}_kf\|_{L^p}\nonumber\\
&\lesssim\sum_{|k-j|\leq 4}\sum_{k^\prime\leq k-2}2^{k^\prime(\frac{2n}{p}-\frac{n}{2})}\|\dot{\Delta}_{k^\prime}{g^\ell}\|_{L^p}\|\dot{\Delta}_kf\|_{L^p}\nonumber\\
&\lesssim\sum_{|k-j|\leq 4}(\sum_{k^\prime\leq k-2}2^{-k^\prime(\sigma-\frac n2+1)}2^{k^\prime(\sigma+\frac{2n}{p}-n+1)}
\|\dot{\Delta}_{k^\prime}{g^\ell}\|_{L^p})
\|\dot{\Delta}_kf\|_{L^p}\nonumber\\
&\lesssim 2^{-(\sigma+\frac{n}{p}-\frac{n}{2}) j}\|f\|_{\dot{B}_{p,1}^{\frac{n}{p}-1}}\|g^\ell\|_{\dot{B}_{p,\infty}^{\sigma+\frac{2n}{p}-n+1}}
\nonumber\\
&\lesssim 2^{-(\sigma+\frac{n}{p}-\frac{n}{2}) j}\|f\|_{\dot{B}_{p,1}^{\frac{n}{p}-1}}\|g^\ell\|_{\dot{B}_{2,\infty}^{\sigma+1}}.
\end{align}
As $\sigma+\frac{2n}{p}-\frac{n}{2}>0,$ we can deal with the
the remainder term as follows:
\begin{align}\label{ty7}
\|\dot{\Delta}_j\dot{R}(f,g^\ell)\|_{L^2}&\lesssim \sum_{k\geq j-3}\sum_{|k-k^\prime|\leq 1}
\|\dot{\Delta}_j(\dot{\Delta}_{k}f\dot{\Delta}_{k^\prime}g^\ell)\|_{L^2}\nonumber\\
&\lesssim 2^{j(\frac{2n}{p}-\frac{n}{2})}\sum_{k\geq j-3}\sum_{|k-k^\prime|\leq 1}\|\dot{\Delta}_{k}f\|_{L^{p}}
2^{(\frac{n}{2}-\frac{n}{p}k^\prime)}\|\dot{\Delta}_{k^\prime}g^\ell\|_{L^{2}}\nonumber\\
&\lesssim2^{j(\frac{2n}{p}-\frac{n}{2})}\sum_{k\geq j-3}2^{-k(\sigma+\frac{2n}{p}-\frac{n}{2})}d_k\|f\|_{\dot{B}_{p,1}^{\frac{n}{p}-1}}
\|g^\ell\|_{\dot{B}_{2,\infty}^{\sigma+1}}\nonumber\\
&
\lesssim2^{-\sigma j}\|f\|_{\dot{B}_{p,1}^{\frac{n}{p}-1}}\|g^\ell\|_{\dot{B}_{2,\infty}^{\sigma+1}}.
\end{align}
Summing up \eqref{ty5}--\eqref{ty7}, noticing $\sigma+\frac{n}{p}-\frac{n}{2}\le\sigma$, we  can obtain \eqref{key1}.

Finally, we  are concerned with the  proof of  \eqref{key}. Thanks to Bony's decomposition again
\begin{align}\label{mingzai3}
fg^h=\dot{T}_f{g^h}+\dot{T}_{g^h} f+\dot{R}(f,g^h).
\end{align}
By virtue of  the H\"older inequality, Bernstein's  inequality and the fact $$\frac{n}{2}-1\le\frac{n}{p}, \quad\hbox{and}\quad\sigma<\frac n2-1,$$ one deduces that
\begin{align}\label{ty8}
\|\dot{\Delta}_j(\dot{T}_fg^h)\|_{L^2}
&\lesssim\sum_{|k-j|\leq 4}(\sum_{k^\prime\leq k-2}\|\dot{\Delta}_{k^\prime}f\|_{L^{{\frac{2p}{p-2}}}})\|\dot{\Delta}_kg^h\|_{L^p}\nonumber\\
&\lesssim\sum_{|k-j|\leq 4}(\sum_{k^\prime\leq k-2}2^{k^\prime(\frac{2n}{p}-\frac{n}{2})}
\|\dot{\Delta}_{k^\prime}f\|_{L^p})\|\dot{\Delta}_kg^h\|_{L^p}\nonumber\\
&\lesssim\sum_{|k-j|\leq 4}(\sum_{k^\prime\leq k-2}2^{k^\prime(\frac{n}{p}-\frac{n}{2}+1)}2^{k^\prime(\frac{n}{p}-1)}
\|\dot{\Delta}_{k^\prime}f\|_{L^p})
2^{-k\sigma}2^{k(\sigma+\frac{n}{p}-\frac{n}{2}+1)}\|\dot{\Delta}_kg^h\|_{L^p}\nonumber\\
&\lesssim 2^{-\sigma j}\|f\|_{\dot{B}_{p,1}^{\frac{n}{p}-1}}\|g^h\|_{\dot{B}_{p,\infty}^{\sigma+\frac{n}{p}-\frac{n}{2}+1}}\nonumber\\
&\lesssim 2^{-\sigma j}\|f\|_{\dot{B}_{p,1}^{\frac{n}{p}-1}}\|g^h\|_{\dot{B}_{p,1}^{\frac{n}{p}}}.
\end{align}
The term
$\|\dot{\Delta}_j\dot{R}(f,g^h)\|_{L^2}$ can be dealt in  a similar manner,
\begin{align}\label{ty9}
\|\dot{\Delta}_j\dot{R}(f,g^h)\|_{L^2}\lesssim 2^{-\sigma j}\|f\|_{\dot{B}_{p,1}^{\frac{n}{p}-1}}\|g^h\|_{\dot{B}_{p,1}^{\frac{n}{p}}}.
\end{align}
Thanks to Bernstein's  inequality, we have
\begin{align}\label{ty10}
\|\dot{\Delta}_j(\dot{T}_{g^h}f)\|_{L^2}&\lesssim \sum_{|k-j|\leq 4}\sum_{k^\prime\leq k-2}\|\dot{\Delta}_j(\dot{\Delta}_{k^\prime}{g^h}\dot{\Delta}_kf)\|_{L^2}\lesssim\sum_{|k-j|\leq 4}(\sum_{k^\prime\leq k-2}\|\dot{\Delta}_{k^\prime}{g^h}\|_{L^{{\frac{2p}{p-2}}}})\|\dot{\Delta}_kf\|_{L^p}\nonumber\\
&\lesssim\sum_{|k-j|\leq 4}(\sum_{k^\prime\leq k-2}2^{k^\prime(\frac{2n}{p}-\frac{n}{2})}\|\dot{\Delta}_{k^\prime}{g^h}\|_{L^p})\|\dot{\Delta}_kf\|_{L^p}\nonumber\\
&\lesssim\sum_{|k-j|\leq 4}(\sum_{k^\prime\leq k-2}2^{-k^\prime(\sigma+1-\frac n2)}2^{k^\prime(\sigma+\frac{2n}{p}-n+1)}
\|\dot{\Delta}_{k^\prime}{g^h}\|_{L^p})
\|\dot{\Delta}_kf\|_{L^p}\nonumber\\
&\lesssim 2^{-(\sigma-\frac n2+\frac np) j}\|f\|_{\dot{B}_{p,1}^{\frac{n}{p}-1}}\|g^h\|_{\dot{B}_{p,\infty}^{\sigma+\frac{2n}{p}-n+1}},
\end{align}
from which  and relations $$\frac np-\frac n2\le 0,  \quad \sigma+\frac{2n}{p}-n+1\le \frac np,$$ we have
\begin{align}\label{ty11}
\|\dot{T}_{g^h}f\|^\ell_{\dot{B}_{2,1}^{\sigma}}&\lesssim \|\dot{T}_{g^h}f\|^\ell_{\dot{B}_{2,1}^{\sigma-\frac n2+\frac np}}\lesssim
\|f\|_{\dot{B}_{p,1}^{\frac{n}{p}-1}}\|g^h\|_{\dot{B}_{p,\infty}^{\sigma+\frac{2n}{p}-n+1}}\lesssim
\|f\|_{\dot{B}_{p,1}^{\frac{n}{p}-1}}\|g^h\|_{\dot{B}_{p,1}^{\frac{n}{p}}}.
\end{align}
Collecting \eqref{ty8}, \eqref{ty9} and\eqref{ty11}, the desired estimate \eqref{key} is obtained.

Consequently, we conclude the proof of \eqref{key0}--\eqref{key}.
\end{proof}

%\noindent {\large\bf  References}

\end{document}